\definecolor{dkgreen}{rgb}{0,0.6,0}
\definecolor{gray}{rgb}{0.5,0.5,0.5}
\definecolor{mauve}{rgb}{0.58,0,0.82}
\tiny\color{gray},
 \newtheorem{thm}{Theorem}[section]
 \newtheorem{cor}{Corollary}[section]
 \newtheorem{lem}{Lemma}[section]
 \newtheorem{prop}{Proposition}[section]
 \newtheorem{rem}{Remark}
 \theoremstyle{definition}
 \theoremstyle{remark}
 \numberwithin{equation}{section}
\begin{document}

\title[Algorithm for monochromatic vertex-disconnection of graphs]
{Algorithm for monochromatic vertex-\\  disconnection of graphs}

\author[M. Fu]{Miao Fu*}
\address{%
Center for Applied Mathematics \\
Tianjin University\\
Tianjin, 300354\\
P. R. China}
\email{fumiao1119@163.com}

\author[Y.Q. Zhang ]{Yuqin Zhang}
\address{%
School of Mathematics \\
Tianjin University\\
Tianjin, 300072\\
P. R. China}
\email{yuqinzhang@tju.edu.cn}

\thanks{*Miao Fu: fumiao1119@163.com
\\Center for Applied Mathematics, Tianjin University, Tianjin, P. R. China 300354
\\Yuqin Zhang: yuqinzhang@tju.edu.cn
\\School of Mathematics, Tianjin University, Tianjin, P. R. China 300072
}

\subjclass{05C15, 05C40, 05C85}

\keywords{Monochromatic vertex cut; Monochromatic vertex-disconnection coloring; Depth-first search; block decomposition; Minimally $2$-connected graphs}

\date{}

\begin{abstract}
  Let $G$ be a vertex-colored graph. We call a vertex cut $S$ of $G$ a $monochromatic$ $vertex$ $cut$ if the vertices of $S$ are colored with the same color. The graph $G$ is $monochromatically$ $vertex$-$disconnected$ if any two nonadjacent vertices of $G$ has a monochromatic vertex cut separating them. The $monochromatic$ $vertex$-$disconnection$ $number$ of $G$, denoted by $mvd(G)$, is the maximum number of colors that are used to make $G$ monochromatically vertex-disconnected.
  In this paper, we propose an algorithm to compute $mvd(G)$ and give an $mvd$-coloring of the graph $G$. We run this algorithm with an example written in Java. The main part of the code is shown in Appendix $B$ and the complete code is given on Github: https://github.com/fumiaoT/mvd-coloring.git.
  Secondly, inspired by the previous localization principle, we obtain a upper bound of $mvd(G)$ for some special classes of graphs.
  In addition, when $mvd(G)$ is large and all blocks of $G$ are minimally $2$-connected triangle-free graphs, we characterize $G$.
  On these bases, we show that any graph whose blocks are all minimally $2$-connected graphs of small order, can be computed $mvd(G)$ and given an $mvd$-coloring in polynomial time.
\end{abstract}
\maketitle

\section{ Introduction }
\quad \quad All graphs considered in this paper are connected, finite, undirected. We follow the terminology and notation of Bondy and Murty \cite{ref1}. Let $G =\big(V(G)$, $E(G)\big)$ be a nontrivial connected graph with vertex set $V(G)$ and edge set $E(G)$. Let $|G|$ be the $order$ of $G$. For $D\subseteq E(G)$, $G-D$ is the graph obtained by removing $D$ from $G$. For $S\subseteq V(G)$, $G-S$ is the graph obtained by removing $S$ and the edges incident to the vertices of $S$ from $G$, and $G[S]$ is the subgraph of $G$ $induced$ by $S$. A graph $G$ is $minimally$ $k$-$connected$ ($minimal\ block$) if $G$ is $k$-connected but $G-\{e\}$ is not $k$-connected for every $e\in E(G)$. A $block$ of a graph $G$ is a maximal connected subgraph of $G$ that has no cut-vertex. So every block of a nontrivial connected graph is either $K_{2}$ or a $2$-connected subgraph, which is called $trivial$ and $nontrivial$, respectively. All the blocks of a graph $G$ form a $block\ decomposition$ of $G$.

An $edge$-$coloring$ of a graph $G$ is a function $\Gamma: E(G)\rightarrow[k]$ for some positive integer $k$, where adjacent edges may be assigned the same color. A $vertex$-$coloring$ of a graph $G$ is a function $\tau: V(G)\rightarrow[k]$ for some positive integer $k$, where adjacent vertices may be assigned the same color.

The notion rainbow connection coloring was introduced by Chartrand et al. \cite{ref2} in 2008. A $rainbow$ $connection$ $coloring$ of a graph $G$ is an edge-coloring of $G$ such that any two distinct vertices of $G$ are connected by a rainbow path (a path of $G$ whose edges are colored pairwise differently). The notion $rainbow$ $vertex$-$connection$ $coloring$ was introduced by Krivelevich and Yuster \cite{ref9} in 2010, which is defined from the vertex-version. For more details about the rainbow connection coloring and vertex-connection coloring, we refer to \cite{ref10, ref11, ref12, ref13, ref20}.

In contrast to the concepts of rainbow connection coloring and vertex-connection coloring, monochromatic versions of these concepts appear naturally as the other extremal. The notion monochromatic connection coloring was introduced by Caro and Yuster \cite{ref3} in 2011. A $monochromatic$ $connection$ $coloring$ of a graph $G$ is an edge-coloring of $G$ such that any two distinct vertices are connected by a monochromatic path (a path of $G$ whose edges are colored the same). The notion $monochromatic$ $vertex$-$connection$ $coloring$ was introduced by Cai, Li and Wu \cite{ref5} in 2018, which is defined from the vertex-version. For more results on the monochromatic connection coloring and vertex-connection coloring, we refer to \cite{ref4, ref6, ref8, ref14}.

There are two ways to study the connectivity of graphs, one using paths and the other using cuts. These concepts mentioned above use paths, so it is natural to consider monochromatic edge cuts and monochromatic vertex cuts in colored graphs. The notion monochromatic disconnection coloring was introduced by Li and Li \cite{ref16} in 2021. An edge-colored graph $G$ is $monochromatically$ $disconnected$ if any two distinct vertices of $G$ has a monochromatic edge cut (an edge cut of $G$ whose edges are colored the same) separating them. An edge-coloring of $G$ is a $monochromatic$ $disconnection$ $coloring$ if it makes $G$ monochromatically disconnected. For more results, we refer to \cite{ref17, ref18}.

In order to study the monochromatic vertex cuts, we now introduce the notion of monochromatic vertex-disconnection coloring in this paper. For a vertex-colored graph $G$, we call a vertex cut $S$ a $monochromatic$ $vertex$ $cut$ if the vertices of $S$ are colored with the same color. For two distinct vertices $x, y$ of $G$, a $monochromatic$ $x$-$y$ $vertex$ $cut$ is a monochromatic vertex cut that separates $x$ and $y$. Obviously, if $x$ is adjacent to $y$, there is no $x$-$y$ vertex cut, so we only need to consider nonadjacent vertices in the following. A vertex-colored graph $G$ is $monochromatically$ $vertex$-$disconnected$ if any two nonadjacent vertices of $G$ has a monochromatic vertex cut separating them. A vertex-coloring of $G$ is a $monochromatic$ $vertex$-$disconnection$ $coloring$ ($MVD$-$coloring$ for short) if it makes $G$ monochromatically vertex-disconnected. For a vertex-colored graph $G$, the $monochromatic$ $vertex$-$disconnection$ $number$ of $G$, denoted by $mvd(G)$, is the maximum number of colors that are used to make $G$ monochromatically vertex-disconnected. A monochromatic vertex-disconnection coloring with $mvd(G)$ colors is called an $mvd$-$coloring$ of $G$. Naturally, for complete graphs $K_{n}$, we define $mvd(K_{n})=n$. For a vertex-coloring $\tau$ of $G$, $\tau(v)$ is the color of vertex $v$, $\tau(G)$ is the set of colors used in $G$, and $|\tau(G)|$ is as the number of colors in $\tau(G)$. If $\tau$ is a vertex-coloring of $G$ and $H$ is a subgraph of $G$, then the part of the coloring of $\tau$ on $H$ is called $\tau$ $restricted\ to$ $H$.

Suppose $G$ is a graph that may have multiple edges but no loops, and $G'$ is a simple graph corresponding to it. Two vertices $x$ and $y$ are adjacent in $G$ if and only if they are also adjacent in $G'$. Therefore, we have the following result, which means that we only need to consider simple graphs in this paper.

\begin{prop}\label{underling}
If $G'$ is a simple graph corresponding to a graph $G$, then $mvd(G')=mvd(G)$.
\end{prop}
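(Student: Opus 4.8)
The plan is to show that $G$ and $G'$ have exactly the same vertex cuts, so that a vertex-coloring is an MVD-coloring of one precisely when it is an MVD-coloring of the other; since both graphs share the same vertex set, the maximum number of colors over all such colorings must then coincide.

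First I would record the two facts built into the setup. The graphs $G$ and $G'$ have the same vertex set, $V(G)=V(G')$, and by hypothesis a pair of vertices is adjacent in $G$ if and only if it is adjacent in $G'$. In particular a pair is nonadjacent in $G$ exactly when it is nonadjacent in $G'$, so the two graphs impose the same separation requirements on the same pairs.

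The key step is to verify that separation is insensitive to edge multiplicities. Fix $S\subseteq V(G)$ and two vertices $x,y\notin S$. Deleting $S$ from either graph removes the same vertices and destroys the same adjacencies, so $G-S$ and $G'-S$ have identical adjacency relations; consequently there is an $x$-$y$ path in $G-S$ if and only if there is one in $G'-S$, since a path only records which consecutive vertices are joined by at least one edge, not how many. Hence $S$ separates $x$ and $y$ in $G$ exactly when it separates them in $G'$, and so $S$ is a vertex cut of $G$ if and only if it is a vertex cut of $G'$.

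With this in hand the rest is immediate. Since a vertex-coloring $\tau$ assigns colors to the common vertex set $V(G)=V(G')$, the monochromatic vertex cuts of $(G,\tau)$ and $(G',\tau)$ are the same sets, and by the previous paragraph the nonadjacent pairs to be separated are the same. Therefore $\tau$ makes $G$ monochromatically vertex-disconnected if and only if it makes $G'$ monochromatically vertex-disconnected, i.e.\ $\tau$ is an MVD-coloring of $G$ iff it is one of $G'$. Taking the maximum of $|\tau(G)|=|\tau(G')|$ over all such colorings yields $mvd(G)=mvd(G')$. There is no real obstacle here; the only point requiring care is the equivalence of separation under passage from the multigraph to its underlying simple graph, which is precisely the content of the middle step.
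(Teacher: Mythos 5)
Your proposal is correct and follows exactly the reasoning the paper uses: the paper justifies this proposition with the single observation that adjacency in $G$ coincides with adjacency in $G'$, and your argument simply spells out the consequence that vertex cuts, nonadjacent pairs, and hence MVD-colorings are identical for the two graphs. No gap; your write-up is a fuller version of the paper's one-line justification.
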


The monochromatic vertex-disconnection number is not only a natural combinatorial measure, but also applied in logistics transportation, transportation, computer network and many other fields. For example, in the process of logistics transportation, we often need to intercept threatening goods, such as smuggled drugs and confidential documents. We intercept the goods in the cities it may pass through. If the interception is successful in a city, that city can give feedback on the success of the interception by transmitting a signal (e.g. some special frequency) to other cities. Conversely, if no feedback is received, a new round of interceptions will be required.
This situation can be abstracted as a graphical model. Consider cities as vertices, and if there is a transport road between two cities, we assign an edge between these two vertices.
The resulting graph is denoted by $G$ and each vertex is assigned a color according to the frequency transmitted by the city. Suppose the goods are transported from city $x$ to city $y$. If there is a transport road between city $x$ and city $y$ (vertices $x$ and $y$ are adjacent), we can obviously intercept the goods directly in city $y$. If vertices $x$ and $y$ are nonadjacent, in order to intercept the goods, we need to consider the $x$-$y$ vertex cut. In order to determine whether the interception is successful, the $x$-$y$ vertex cut in $G$ needs to be monochromatic, in other words, the cities corresponding to the $x$-$y$ vertex cut have the same frequency. If a city (corresponding to a vertex in the $x$-$y$ vertex cut) transmits this frequency, it indicates a successful interception. Conversely, a new round of interceptions is required for other cities. Then the minimum number of frequencies for cities required in this problem is precisely the monochromatic vertex-disconnection number of the corresponding graph.

The rest of this paper is organized as follows. In Section $2$, we give an algorithm to compute $mvd(G)$ and get an $mvd$-coloring of $G$. This algorithm transforms the global problem of computing $mvd(G)$ and the $mvd$-coloring of $G$ into a local problem for each block of $G$, which greatly simplifies the original problem. In Section $3$, inspired by the localization principle in the previous section, we obtain a upper bound of $mvd(G)$ for some special classes of graphs. In addition, when $mvd(G)$ is large and all blocks of $G$ are minimally $2$-connected triangle-free graphs, we characterize $G$. On these bases, we show that any graph whose blocks are all minimally $2$-connected graphs of small order, can be computed $mvd(G)$ and given an $mvd$-coloring in polynomial time.

\section{ Algorithm for $mvd$-coloring }
\quad \quad In this section, we study the correlation between the $mvd$-coloring of a graph $G$ and the $mvd$-coloring of its blocks, based on which we write an algorithm to compute $mvd(G)$ and give an $mvd$-coloring of $G$: construct a $type\ set$ consisting of all graphs whose $mvd$-coloring is known; input a graph $G$ and the algorithm performs a block decomposition of $G$ (Algorithm \ref{algorithm2}); find graphs in the type set that are isomorphic to these blocks and color these blocks according to the colors in the type set, then complete the $mvd$-coloring of $G$ (Algorithm \ref{algorithm3}). Finally, we run Algorithm \ref{algorithm2} and Algorithm \ref{algorithm3} with an example written in Java.

\subsection{ Block decomposition algorithm }
\quad \quad Our algorithm is based on the block decomposition algorithm of the undirected graph $G$. In 1972, Tarjan \cite{ref22} showed how block decomposition problem can be solved efficiently by means of $Depth$-$first\ search\ (DFS)$. However, Tarjan's algorithm is skillful and the part of this algorithm that solves connected components of undirected graphs is less known. Thus in this section, first, we re-state and prove the principle of the undirected graph block decomposition algorithm in a more understandable way than in \cite{ref22}. Subsequently, we combine this principle with the variables introduced in Tarjan's algorithm to obtain the pseudo-code for block decomposition.

\subsubsection{ The principle of block decomposition algorithm }
\paragraph{} \quad \quad In DFS, we always start exploring from the most recently discovered vertex with unexplored edges and maintain a vertex, called ACTIVE. DFS number the vertices from $1$ to $n$ in the order when they are discovered. $dfs(v)$ is defined as the number assigned to vertex $v$. For each vertex $v$ apart from $x$ (the root) we can record the vertex $parent(v)$ from which $v$ has been discovered. Then $parent(v)$ is called the $parent$ of $v$. A $rooted\ tree$ is a directed tree with one vertex $x$ chosen as root. The rooted tree that we obtain by applying DFS on an undirected graph $G$ is called a $DFS\ tree$. In a DFS tree, vertex $w$ is called an $ancestor$ of $v$, and $v$ is called a $descendant$ of $w$ if there is a directed path from $w$ to $v$. The edges of a DFS tree are called $tree\ edges$ and consist of all directed edges of the form $parent(v)\longrightarrow v$. All other edges of the graph $G$ are called $back\ edges$, and obviously each back edge connects some vertex $v$ to one of its ancestors.

Now we state the principle of the block decomposition algorithm and provide a rigorous theoretical proof.

\begin{thm}\label{block1}
$G$ is a connected graph. Maintain a vertex called ACTIVE. Build a DFS tree $T$ of $G$, and delete parts of $T$ as blocks are identified. We can get the blocks of $G$ as follows.
\makeatletter
\newenvironment{algorithm2.1}
  {
   \begin{center}
     \refstepcounter{algorithm}
     \hrule height.8pt depth0pt \kern2pt
     \renewcommand{\caption}[2][\relax]{
       {\raggedright\textbf{\ALG@name~\thealgorithm} ##2\par}
       \ifx\relax##1\relax
         \addcontentsline{loa}{algorithm}{\protect\numberline{\thealgorithm}##2}
       \else
         \addcontentsline{loa}{algorithm}{\protect\numberline{\thealgorithm}##1}
       \fi
       \kern2pt\hrule\kern2pt
     }
  }{
     \kern2pt\hrule\relax
   \end{center}
  }
\makeatother

\begin{algorithm2.1}
\begin{algorithmic}[1]
\State {\bf Initialization:} pick a root $x \in V(G)$; make $x$ ACTIVE; set $T=\{x\}$
\While{let $v$ denote the current ACTIVE vertex; $v$ has an unexplored incident edge or $parent(v) \neq$ null ($v\neq x$)}
\If{$v$ has an unexplored incident edge $vw$}
    \State mark $vw$ ``explored''
    \If{$w\notin V(T)$}
        \State $v\longrightarrow w$ is a tree edge, add $v\longrightarrow w$ to $T$, make $w$ ACTIVE
    \Else
        \State $v\longrightarrow w$ is a back edge, $w$ is an ancestor of $v$
    \EndIf
\Else
    \State make $parent(v)$ ACTIVE
    \If{no vertex in the current subtree $T'$ rooted at $v$ has a back edge to an ancestor above $parent(v)$}
        \State $V(T')\cup \{parent(v)\}$ is the vertex set of a block; record this information and delete $V(T')$ from $T$
    \EndIf
\EndIf
\EndWhile
\end{algorithmic}
\end{algorithm2.1}
\end{thm}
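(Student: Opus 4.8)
The plan is to prove the statement in two layers: first establishing the fundamental structural properties of the DFS tree $T$ on an undirected graph, which justify the edge classification in lines 6--8, and then showing that the backtracking test in lines 10--12 isolates exactly the blocks of $G$. Throughout I would use the convention that a block is a maximal connected subgraph with no cut-vertex, together with the standard fact that two distinct blocks share at most one vertex; this fact immediately implies that each block is induced by its vertex set, so it suffices to recover the vertex sets.

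First I would prove the ``no cross edges'' property: if $vw\in E(G)$ and both endpoints already lie in $T$, then one of $v,w$ is an ancestor of the other. Assuming $dfs(v)<dfs(w)$, the entire subtree explored from $v$ is completed before $v$ is backtracked past, and since $vw$ is incident to $v$ it is explored while $v$ is on the active path, forcing $w$ to be discovered as a descendant of $v$. This is exactly what justifies line 8 (when $w\in V(T)$, the vertex $w$ is genuinely an ancestor of $v$, so every non-tree edge is a back edge) and line 6 (when $w\notin V(T)$ the edge $v\longrightarrow w$ is a true tree edge), and shows $T$ grows as a spanning tree in DFS order.

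Next I would introduce the auxiliary low-value $low(v)=\min\{\, dfs(u): u \text{ is reachable from } v \text{ by descending tree edges and then following at most one back edge}\,\}$. The test in line 11 --- that no vertex of the current subtree $T'$ rooted at $v$ has a back edge above $parent(v)$ --- is then precisely the condition $low(v)\ge dfs(parent(v))$. I would then establish the Hopcroft--Tarjan articulation-point characterization: for a non-root $u=parent(v)$, the inequality $low(v)\ge dfs(u)$ holds iff every edge leaving the subtree of $v$ runs to $u$ or stays inside that subtree (using the no-cross-edge property to rule out cross edges), so that deleting $u$ separates the descendants of $v$ from the rest of $G$; for the root the test is vacuously satisfied since it has no ancestors, matching the fact that the root is a cut-vertex iff it has more than one child.

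The remaining and most delicate point is to account for the dynamic deletion of $T$, and I would handle it by induction on the order in which backtracking occurs, proceeding up the tree. When the algorithm backtracks from $v$, every block lying strictly below $v$ has already been reported and its interior deleted, so the surviving subtree $T'$ is exactly the vertex set, minus $parent(v)$, of the still-unreported block whose highest vertex is $parent(v)$; the deleted vertices can receive no later back edge, because by the block structure all edges leaving them pass through an already-identified cut-vertex. Hence each recorded set $V(T')\cup\{parent(v)\}$ induces a single block, distinct recordings carry distinct topmost tree edges and so give distinct blocks, and every block is emitted exactly when its topmost tree edge is backtracked (the root emitting one block per completed child-subtree). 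The hard part will be exactly this bookkeeping: showing that at the instant of backtracking the surviving $T'$ is neither too large --- a union of several blocks --- nor too small, and that no future exploration can reach a deleted vertex. Once this invariant is in place, the rest reduces to the classical DFS structure theorems.
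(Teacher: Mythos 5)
Your approach is correct and completable, but it takes a genuinely different route from the paper's. The paper proves the theorem by induction on $|G|$ and analyzes only the \emph{first} set the algorithm reports: at that moment nothing has yet been deleted, so the dynamic bookkeeping you flag as ``the hard part'' never arises. For that first set it argues directly that $G[V(T')\cup\{parent(v)\}]$ is a block: since no earlier backtrack produced a block, every earlier test inside $T'$ failed, so each vertex of $T'$ is covered by a back edge from a descendant to an ancestor, giving a cycle through that vertex (hence $V(T')$ contains no cut-vertex), while the absence of back edges above $parent(v)$ makes $parent(v)$ a cut-vertex; the induction then rests on the claim (asserted, not proved, in the paper) that the continued run behaves exactly like a fresh run on $G-(B-parent(v))$. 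You instead induct on the sequence of backtracking events and maintain the invariant ``surviving subtree $=$ block minus its top vertex,'' which is essentially Tarjan's original correctness argument, and you recast the line-11 test as the $low$-value/articulation-point characterization, which the paper only introduces afterwards (its Corollary 2.2). What each buys: the paper's first-block reduction avoids your invariant entirely but hides the deletion issue inside the unproved fresh-run equivalence; your route needs no such equivalence, but you must actually prove the invariant, which requires the standard block-versus-DFS-tree facts (each non-root vertex is interior to exactly one block; a block minus its top vertex is tree-connected below a single child of the top).

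Two details to repair if you carry this out. First, deleted vertices are harmless for a simpler and correct reason than the one you give: a subtree is deleted only after it has been fully backtracked, so every edge incident to it is already marked explored and can never be traversed again. Your stated reason (``all edges leaving them pass through an already-identified cut-vertex'') is not right as written, because an interior vertex of a reported block may itself be a cut-vertex of $G$ with earlier-reported blocks hanging below it. Second, your invariant as phrased covers only the case in which the test passes; when it fails, the survivors of the subtree of $v$ are the intersection of that subtree with a block whose top lies strictly above $parent(v)$, and you need both halves of this dichotomy in the induction to conclude that every block is emitted exactly once, precisely at its topmost tree edge.
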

\begin{proof}
The proof proceeds by induction on $|G|$. For $K_2$, the algorithm correctly identifies the single block ($K_1$ is a special case). By induction, for a larger graph $G$, it suffices to show that the first set identified as a block is indeed a block $B$ that share one vertex $parent(v)$ with the rest of $G$ (since the subsequent run of the algorithm on $G$ is equivalent to running the algorithm on the graph $G-(B-parent(v))$).

In line $10$ and $11$, we backtrack from $v$ to $parent(v)$. That is, when the ACTIVE vertex changes from $v$ to $parent(v)$, we need to check if any vertex in the subtree $T'$ rooted at $v$ has a neighbor above $parent(v)$. This is easy to do because in Line $8$, when we mark a back edge to an ancestor as explored, we record for the vertices on the path in $T$ between the endpoints of the back edge that there is a back edge from a descendant to an ancestor. When $parent(v)$ becomes ACTIVE again, we check if it was ever so marked.

We claim that $G[V(T')\cup\{parent(v)\}]$ is a block when the ACTIVE vertex changes from $v$ to $parent(v)$ and no vertex in the subtree $T'$ rooted at $v$ has a neighbor above $parent(v)$. In this case, for any vertex $j$ in $V(T')$, there is a back edge $k\longrightarrow i$ from a descendant $k$ of $j$ to an ancestor $i$ of $j$, then $i$ via some tree edges down through $j$ and finally reaches $k$, forming a cycle. Thus there is no cut-vertex in $V(T')$, in other words, no proper subset of set $V(T')$ induces a block. On the other hand, since there is no edge joining $T'$ to an ancestor of $parent(v)$, then $parent(v)$ is a cut-vertex. Therefore, $G[V(T')\cup \{parent(v)\}]$ is a maximal subgraph having no cut-vertex.
\end{proof}

\subsubsection{ Pseudo-code for block decomposition algorithm}
\paragraph{} \quad \quad We introduce a parameter $low(\cdot)$ from Tarjan's algorithm to implement line 8 and 11 in Theorem \ref{block1}. If $v$ is a leaf of the DFS tree, let $low(v)=dfs(v)$ immediately when $v$ is discovered, and update $low(v)$ to $low(v)$=min$\{low(v), dfs(w)\}$ as each back edge $v\longrightarrow w$ is explored. If $v$ is not a leaf of the DFS tree, when we backtrack from $v$ to $parent(v)$, it is clear that we have backtracked from all its descendants earlier and therefore already know their $low(\cdot)$. So in addition to what we do for the leaves, it is sufficient to do the following: when we backtrack from $v$ to $parent(v)$, update $low(parent(v))$ to $low(parent(v))$=min$\{low(parent(v)), low(v)\}$.

\noindent$\displaystyle \textbf{Example 2.1.}$ The graph shown in Fig. \ref{1}(1). We apply DFS to this graph, starting with root $x=A$, and assume that we discover vertices in the order of $A,\ B\ ,C\ ,D\ ,E\ ,F\ ,G$. The resulting $\big(dfs(\cdot),\ low(\cdot)\big)$, tree edges, and back edges are shown in Fig. \ref{1}(2).

\begin{figure}
\centering
  \includegraphics[height=4cm]{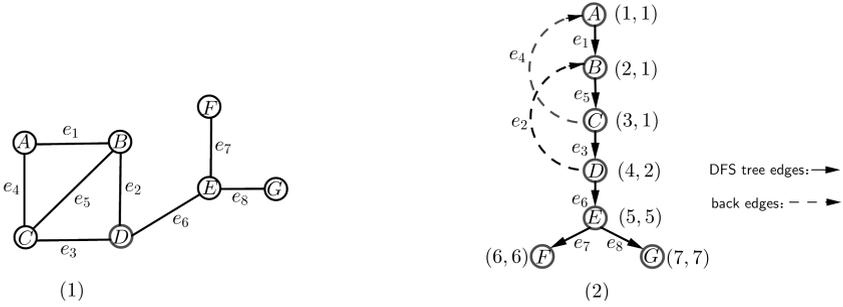}
  \caption{$\big(dfs(\cdot),\ low(\cdot)\big)$, the DFS tree edges and back edges}
  \label{1}
\end{figure}

Combining Theorem \ref{block1} with the parameter $low(\cdot)$ described above, it is easy to obtain the following corollary.

\begin{cor}\label{gedian1}
Let $G$ be a graph whose vertices have been numbered by DFS, starting with $x\ (dfs(x)=1)$. $x$ is a cut-vertex if and only if there are at least two tree edges out of $x$. If $parent(v)\longrightarrow v$ is a tree edge and $dfs(parent(v))$ $>1$, then $parent(v)$ is a cut-vertex of $G$ if and only if $low(v)\geq dfs(parent(v))$.
\end{cor}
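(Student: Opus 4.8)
The plan is to prove the two characterizations separately, reducing each to the block-identification criterion established in Theorem~\ref{block1}, namely that the cut-vertices of $G$ are exactly the vertices $parent(v)$ at which a subtree $T'$ gets cut off because no vertex of $T'$ has a back edge reaching above $parent(v)$.

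\emph{The root.} First I would treat $x$ separately, since it has no parent and the statement about $low(\cdot)$ does not apply to it. The key fact is that in a DFS tree there are no cross edges: every non-tree edge is a back edge joining a vertex to one of its ancestors. Hence if $x$ has only one child $c$ in $T$, then every vertex of $G$ lies in the subtree rooted at $c$, and every path between two such vertices can avoid $x$ by staying inside that subtree (using tree edges and back edges, which never cross between the children's subtrees); so $x$ is not a cut-vertex. Conversely, if $x$ has two distinct children $c_1,c_2$, then any $c_1$--$c_2$ path must pass through $x$, because a back edge only connects a vertex to an ancestor and therefore cannot bridge the two subtrees; deleting $x$ disconnects $c_1$ from $c_2$, so $x$ is a cut-vertex. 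This gives the equivalence ``$x$ is a cut-vertex $\iff$ $x$ has at least two tree edges out of it.''

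\emph{Non-root vertices.} For $parent(v)$ with $dfs(parent(v))>1$ I would translate the condition in Theorem~\ref{block1} into the language of $low(\cdot)$. By the definition of $low$, the value $low(v)$ is exactly the smallest $dfs$-number reachable from the subtree $T'$ rooted at $v$ using tree edges downward and at most one back edge; equivalently, $low(v)$ is the $dfs$-number of the highest ancestor that any vertex of $T'$ can reach by a back edge (or $dfs(v)$ itself if no such back edge exists). Therefore the phrase ``no vertex in $T'$ has a back edge to an ancestor strictly above $parent(v)$'' is precisely the inequality $low(v)\geq dfs(parent(v))$. Applying Theorem~\ref{block1}, this is exactly the condition under which $T'$ is cut off at $parent(v)$, making $parent(v)$ a cut-vertex; and if instead $low(v)<dfs(parent(v))$, some descendant of $v$ reaches an ancestor above $parent(v)$, so $G[V(T')\cup\{parent(v)\}]$ is not yet closed off and $parent(v)$ is not forced to be a cut-vertex by this subtree.

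The main obstacle I anticipate is the correctness of the claim that $low(v)$ computes the intended quantity; this must be argued by induction on the DFS tree, using the update rules for $low(\cdot)$ described just before the corollary (initialize $low(v)=dfs(v)$, take the minimum over back edges $v\longrightarrow w$ of $dfs(w)$, and propagate $\min\{low(parent(v)),low(v)\}$ on backtracking). I would also be careful to phrase the non-root equivalence correctly: a single subtree giving $low(v)\geq dfs(parent(v))$ certifies that $parent(v)$ is a cut-vertex, and since the corollary quantifies over the existence of such a tree edge, no separate converse beyond the per-subtree argument is needed.
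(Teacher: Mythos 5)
Your root argument and your sufficiency argument for non-root vertices are both sound, and they supply details the paper never writes down (the paper offers no proof of this corollary at all, merely asserting that it follows by combining Theorem~\ref{block1} with the parameter $low(\cdot)$). You also made a good catch: read literally for a \emph{fixed} tree edge $parent(v)\longrightarrow v$, the corollary's ``only if'' direction is false --- if $u$ has two children $v_1,v_2$ where the subtree of $v_1$ sends a back edge above $u$ but the subtree of $v_2$ does not, then $u$ is a cut-vertex even though $low(v_1)<dfs(u)$ --- so the statement must indeed be read existentially over the children of $u$, as you propose.

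However, your closing claim that ``no separate converse beyond the per-subtree argument is needed'' is precisely where a genuine gap sits. Under the existential reading, the corollary asserts: a non-root vertex $u$ is a cut-vertex if and only if \emph{some} child $v$ of $u$ satisfies $low(v)\geq dfs(u)$. Your per-subtree argument establishes only the ``if'' half; the ``only if'' half --- that every non-root cut-vertex has at least one such child --- does not follow automatically from quantifying existentially, and it is exactly this half that guarantees Algorithm~\ref{algorithm2} reports \emph{every} cut-vertex rather than merely a subset of them. The missing argument is short but must be made: if $u$ is a non-root cut-vertex, some component $C$ of $G-u$ does not contain the root $x$; every vertex of $C$ is a descendant of $u$, since otherwise its tree path to $x$ avoids $u$ and would join it to $x$ in $G-u$; because an undirected DFS produces no cross edges, $C$ is the subtree of exactly one child $v$ of $u$; and because $C$ is a whole component of $G-u$, no back edge leaves that subtree to a proper ancestor of $u$, which by your own translation of $low(\cdot)$ yields $low(v)\geq dfs(u)$. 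You already invoke the no-cross-edge property in your root case, so all the tools are in your hands --- but this step has to be carried out, not dismissed.
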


It is worth mentioning that in DFS, we maintain the list of vertices to be searched as a stack (pushdown store). We store the vertices in the stack in the order of when they are discovered. When backtracking from $v$ to $parent(v)$, we discover that $parent(v)$ is a cut-vertex, and we read and pop all vertices from the top of the stack down to and including $v$. All these vertices, plus $parent(v)$, (which is not popped at this point from the stack even if it is the next on top) constitute a block.

Now we implement Theorem \ref{block1} and obtain Algorithm \ref{algorithm2}. The algorithm produces the set of cut-vertices of the graph $G$ as well as the set of blocks, both of which are initialized to null. $S$ is a stack of vertices.

\makeatletter
\newenvironment{breakable1algorithm}
  {
   \begin{center}
     \refstepcounter{algorithm}
     \hrule height.8pt depth0pt \kern2pt
     \renewcommand{\caption}[2][\relax]{
       {\raggedright\textbf{\ALG@name~\thealgorithm} ##2\par}
       \ifx\relax##1\relax
         \addcontentsline{loa}{algorithm}{\protect\numberline{\thealgorithm}##2}
       \else
         \addcontentsline{loa}{algorithm}{\protect\numberline{\thealgorithm}##1}
       \fi
       \kern2pt\hrule\kern2pt
     }
  }{
     \kern2pt\hrule\relax
   \end{center}
  }
\makeatother

\begin{breakable1algorithm}\label{algorithm2}
\caption{Block decomposition algorithm for graph $G$}
\hspace*{0.02in} {\bf Input:}
$(V(G),\ E(G))$, a root $x$ of $G$, current ACTIVE vertex $v$\\
\hspace*{0.02in} {\bf Output:}
set $CutVerticesSet$ of cut-vertices, set $BlocksSet$ of blocks
\begin{algorithmic}[1]
\State \textbf{for every} edge $vw\in E(G)$ mark $vw$ ``unexplored''
\For{\textbf{every} vertex $w \in V(G)$}
    \State $dfs(w) \gets 0$
    \State $parent(w)$ $\gets$ null
\EndFor
\State $v \gets x$
\State $dfs(x) \gets 1$
\State $i \gets 1$
\State vacate $S$
\State push $x$ into $S$
\While{$v$ has an unexplored incident edge or $parent(v) \neq$ null}
    \If{$v$ has an unexplored incident edge $vw$}
        \State mark $vw$ ``explored''
        \If{$dfs(w)=0$}
            \State push $w$ into $S$
            \State $parent(w) \gets v$
            \State $i=i+1$
            \State $dfs(w) \gets i$
            \State $low(w) \gets i$
            \State $v \gets w$ (make $w$ ACTIVE)
        \Else{ ($w$ is explored, that is, $w$ is an ancestor of $v$)}
            \State $low(v)$ $\gets$ min$\{low(v),dfs(w)\}$
        \EndIf
    \Else{}
        \If{$low(v)\geq dfs(parent(v))$ (by Corollary \ref{gedian1})}
            \If{$parent(v)\neq x$ or $x$ has an unexplored incident edge}
                   \State add $parent(v)$ to $CutVerticesSet$
            \EndIf
            \State pop vertices from $S$ down to and including $v$
            \State the set of popped vertices, with $parent(v)$, is an element of $BlocksSet$
        \Else{ ($low(v)<dfs(parent(v))$)}
            \State $low(parent(v))$ $\gets$ min$\{low(parent(v)),low(v)\}$
        \EndIf
        $v \gets parent(v)$ (make $parent(v)$ ACTIVE)
    \EndIf
\EndWhile
\end{algorithmic}
\end{breakable1algorithm}

\subsection{ Pseudo-code for $mvd$-coloring algorithm }
\quad \quad
\begin{lem}\label{induced}
If $\tau$ is an $MVD$-coloring of $G$, then $\tau$ restricted to $G[S]$ is also an $MVD$-coloring of $G[S]$.
\end{lem}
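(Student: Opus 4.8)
The plan is to take an arbitrary pair of nonadjacent vertices in $G[S]$ and manufacture, out of a monochromatic vertex cut that already exists in $G$, a monochromatic vertex cut inside $G[S]$ that separates them.

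First I would fix two vertices $x,y\in S$ that are nonadjacent in $G[S]$. Because $G[S]$ is an \emph{induced} subgraph, $x$ and $y$ are also nonadjacent in $G$: an edge $xy$ of $G$ with both endpoints in $S$ would already lie in $G[S]$. Hence the hypothesis that $\tau$ is an $MVD$-coloring of $G$ applies to the pair $x,y$, and there is a monochromatic $x$-$y$ vertex cut $T\subseteq V(G)$; in particular all vertices of $T$ share one color under $\tau$, and $x,y\notin T$.

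The key step is to verify that the restricted set $T\cap S$ is a monochromatic $x$-$y$ vertex cut of $G[S]$ under $\tau$ restricted to $G[S]$. It is monochromatic because it is a subset of the monochromatic set $T$, and the restriction of $\tau$ assigns these vertices exactly the colors $\tau$ gives them. To see that it separates $x$ and $y$ in $G[S]$, I would argue by contradiction: any $x$-$y$ path $P$ in $G[S]-(T\cap S)$ uses only vertices of $S\setminus T$ and only edges of $G$, so $P$ is also an $x$-$y$ path in $G-T$, contradicting the fact that $T$ separates $x$ and $y$ in $G$. Therefore no such path exists, and $T\cap S$ separates $x$ and $y$ in $G[S]$.

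The one point needing care is the degenerate case $T\cap S=\emptyset$, which forces $x$ and $y$ to lie in different components of $G[S]$ itself; such a pair is already separated and needs no further cut. Apart from this bookkeeping, the path-lifting argument is the heart of the proof and the only place where something could go wrong. Since $x,y$ range over all nonadjacent pairs of $G[S]$, concluding that $\tau$ restricted to $G[S]$ is an $MVD$-coloring of $G[S]$ is then immediate.
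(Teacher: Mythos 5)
Your proposal is correct and follows essentially the same argument as the paper's proof: intersect a monochromatic $x$-$y$ vertex cut $D$ of $G$ with $S$ and lift any surviving $x$-$y$ path in $G[S]-(D\cap S)$ back to $G-D$ for a contradiction. Your two added remarks (that nonadjacency in $G[S]$ implies nonadjacency in $G$ since the subgraph is induced, and that the case $D\cap S=\emptyset$ is harmless) are sensible points of care that the paper leaves implicit.
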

\begin{proof}
Let the coloring of $\tau$ restricted to $G[S]$ be denoted as $\tau'$ and let $x,\ y$ be two nonadjacent vertices of $G[S]$. If $D$ is a monochromatic $x$-$y$ vertex cut of $G$, then $D'=D\cap V(G[S])$ is a monochromatic $x$-$y$ vertex cut in $G[S]$. Otherwise, if there exists an $x,y$-path $P$ in $G[S]-D'$, then $P$ is also in $G-D$, a contradiction. Thus, $\tau'$ is an $MVD$-coloring of $G[S]$.
\end{proof}

\begin{thm}\label{everyblock}
Let the connected graph $G$ have $r$ blocks $B_{1}, B_{2},$ $\ldots , B_{r}$. Then the vertex-coloring $\tau$ of $G$ is an $mvd$-coloring if and only if $\tau$ restricted to each block is an $mvd$-coloring of each block.
\end{thm}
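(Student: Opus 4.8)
The plan is to peel the global optimization on $G$ into independent optimizations on the blocks, in two stages: first I would match the qualitative $MVD$-property block by block, and then I would match the number of colors. A fact I will use repeatedly is that each block is an induced subgraph, $B_i=G[V(B_i)]$: two distinct blocks meet in at most one (cut-)vertex, so any edge of $G$ with both endpoints in $V(B_i)$ already lies in $B_i$. This is what lets me apply Lemma \ref{induced} with $S=V(B_i)$.

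For the qualitative layer I would prove that $\tau$ is an $MVD$-coloring of $G$ if and only if each $\tau|_{B_i}$ is an $MVD$-coloring of $B_i$. The forward direction is exactly Lemma \ref{induced}. For the converse, let $x,y$ be nonadjacent in $G$. If some block $B_i$ contains both, then $\tau|_{B_i}$ gives a monochromatic $x$-$y$ cut $D\subseteq V(B_i)$, and I claim $D$ still separates $x$ and $y$ in $G$: any $x$-$y$ path of $G$ must stay inside $V(B_i)$, since every edge leaving $B_i$ emanates from a cut-vertex of $B_i$, so an excursion outside $B_i$ would enter and leave at cut-vertices $c,c'$ of $B_i$; were $c\neq c'$ this excursion would be an ear joining two vertices of the $2$-connected graph $B_i$ through new vertices, making $B_i$ plus the ear a strictly larger $2$-connected subgraph and contradicting maximality, whereas $c=c'$ would repeat a vertex on the path. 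If no block contains both $x$ and $y$, any cut-vertex on the path between their blocks separates them, and a single vertex is trivially monochromatic.

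For the quantitative layer I would pass to the tree $\mathcal T$ of the block decomposition and order the blocks so that each $B_i$ with $i\ge 2$ attaches to $B_1\cup\cdots\cup B_{i-1}$ at one cut-vertex $c_i$. For an arbitrary coloring this gives $|\tau(G)|=\big|\bigcup_i\tau(B_i)\big|\le\sum_i|\tau(B_i)|-(r-1)$, because the color $\tau(c_i)$ is already counted when $B_i$ is added. If moreover $\tau$ is an $MVD$-coloring, then $|\tau(B_i)|\le mvd(B_i)$ by Lemma \ref{induced}, so $mvd(G)\le\sum_i mvd(B_i)-(r-1)$. A matching construction — take an $mvd$-coloring of each $B_i$ and relabel colors along the ordering so that each $B_i$ reuses a color only at its attaching vertex $c_i$ and is otherwise given fresh colors — produces, by the qualitative layer, an $MVD$-coloring attaining this bound, so $mvd(G)=\sum_i mvd(B_i)-(r-1)$.

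The equivalence then follows by tracking equality in these estimates. If $\tau$ is an $mvd$-coloring of $G$, the chain $mvd(G)=|\tau(G)|\le\sum_i|\tau(B_i)|-(r-1)\le\sum_i mvd(B_i)-(r-1)=mvd(G)$ is forced to be tight, whence $|\tau(B_i)|=mvd(B_i)$ and, with Lemma \ref{induced}, each $\tau|_{B_i}$ is an $mvd$-coloring. The reverse implication is the step I expect to be the main obstacle: from each $\tau|_{B_i}$ being an $mvd$-coloring I immediately get the $MVD$-property of $\tau$ and $|\tau(B_i)|=mvd(B_i)$, but to conclude $|\tau(G)|=mvd(G)$ I must also secure the reverse count $|\tau(G)|\ge\sum_i|\tau(B_i)|-(r-1)$, i.e. that the palettes of different blocks share no color beyond those forced to agree at the attaching cut-vertices. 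Controlling these cross-block color coincidences, rather than any behaviour internal to a single block, is the real content here, and is precisely the disjointness that the block-cut-tree combination in the previous step is designed to guarantee.
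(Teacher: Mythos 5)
Your qualitative layer (the $MVD$-property holds for $\tau$ iff it holds for every restriction $\tau|_{B_i}$) coincides with the paper's first claim, and your excursion/ear argument for why an $x$-$y$ path cannot leave a block is in fact more careful than the paper's. Your quantitative layer also works: you prove the identity $mvd(G)=\sum_i mvd(B_i)-(r-1)$ first (by ordering blocks along the block-cut tree and a matching construction) and then deduce the forward direction of the theorem from tightness of the chain $mvd(G)=|\tau(G)|\le\sum_i|\tau(B_i)|-(r-1)\le\sum_i mvd(B_i)-(r-1)=mvd(G)$. This reverses the paper's logical order (the paper proves the theorem first and derives that identity as Corollary \ref{blockrelation}), and your version of the forward direction is cleaner and more rigorous than the paper's, which recolors a deficient block and tacitly assumes there is an $mvd$-coloring of $B_1$ compatible with $\tau$ at all cut-vertices.

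The genuine gap is the reverse implication, exactly where you flagged it --- but it is a gap you could not have closed, because that implication is false as literally stated. Take $G=P_3$ with vertices $a,b,c$ and edges $ab,bc$, and let $\tau(a)=\tau(c)=1$, $\tau(b)=2$. Both blocks are copies of $K_2$, and $\tau$ restricted to each block uses two colors, hence is an $mvd$-coloring of that block (by definition $mvd(K_2)=2$); yet $|\tau(G)|=2<3=mvd(G)$, since coloring all three vertices differently is an $MVD$-coloring of $P_3$ (the cut-vertex $b$ alone is a monochromatic $a$-$c$ cut). So restrictions being $mvd$-colorings does not make $\tau$ an $mvd$-coloring: one needs precisely the cross-block palette disjointness you identify, namely $\tau_i(B_i)\cap\tau_j(B_j)=\{\tau(w)\}$ when $B_i\cap B_j=\{w\}$ and $\tau_i(B_i)\cap\tau_j(B_j)=\varnothing$ when the blocks are disjoint. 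The paper's own proof silently adds these conditions as hypotheses in both directions of its argument, i.e.\ it proves a corrected statement rather than the one displayed in the theorem. Your closing hope that the disjointness is ``guaranteed'' by the block-cut-tree step is the one unsound move: that step guarantees disjointness only for the particular coloring constructed there, not for an arbitrary $\tau$ whose restrictions are $mvd$-colorings, and no argument can supply it --- the hypothesis of the theorem has to be strengthened, as the paper implicitly does.
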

\begin{proof}
Let \{$B_{1}, B_{2}, \ldots, B_{r}$\} be a block decomposition of $G$. If $G$ does not have cut-vertex, then the theorem obviously holds. Now let $G$ have at least one cut-vertex. First, we claim that $\tau$ is an $MVD$-coloring of $G$ if and only if $\tau$ restricted to each block is an $MVD$-coloring of each block.

Since each block is a vertex induced subgraph of $G$, the necessity is obvious by Lemma \ref{induced}. Suppose $\tau$ is a vertex-coloring of $G$ and the coloring of $\tau$ restricted to $B_{i}$ is denoted as $\tau_{i}$. Suppose $\tau_{i}$ is an $MVD$-coloring of $B_{i}$, $i\in [r]$. For any two nonadjacent vertices $x$ and $y$ in $G$, if there exists a block $B_{i}$, which contains both $x$ and $y$, then any monochromatic $x$-$y$ vertex cut in $B_{i}$ with the coloring $\tau_{i}$ is a monochromatic $x$-$y$ vertex cut in $G$. Otherwise, there exists an $x, y$-path $P\not\subset B_{i}$. Since $B_{i}$ is a maximal $2$-connected graph, $P$ must belong to some cycles of $G$. Therefore, $P\subseteq B_{i}$, a contradiction. If there is no block containing both $x$ and $y$, that is, $x$ and $y$ are in different blocks, then there is exactly one $x,\ y$ internally disjoint path, say $P'$, in $G$ and the path $P'$ contains at least one cut-vertex, say $w$. It is obvious that  vertex $w$ is a monochromatic $x$-$y$ vertex cut in $G$.

Next, we assume $\tau_{i}$ is an $mvd$-coloring of $B_{i}$, where $i\in [r]$, and if $B_{i}\cap B_{j}=w$, then $\tau_{i}(B_{i})\cap \tau_{j}(B_{j})=\tau_{i}(w)=\tau_{j}(w)$, and if $B_{i}\cap B_{j}=\varnothing$, then $\tau_{i}(B_{i})\cap \tau_{j}(B_{j})=\varnothing$. Hence, $\tau$ is an $MVD$-coloring of $G$. We claim that $\tau$ is an $mvd$-coloring of $G$. Otherwise, suppose that $f$ is an $mvd$-coloring of $G$, then $|f(G)|>|\tau(G)|$. Let the coloring of $f$ restricted to $B_{i}$ be denoted as $f_{i}$. Then for $i\in [r]$, we have that $f_{i}$ is an $MVD$-coloring of $B_{i}$ and $|f_{i}(B_{i})|\leq |\tau_{i}(B_{i})|$ which contradicts $|f(G)|>|\tau(G)|$.

On the contrary, we set $\tau$ to be an $mvd$-coloring of $G$. Then we obtain that $\tau_{i}$ is an $MVD$-coloring of $B_{i}$, where $i\in [r]$, and if $B_{i}\cap B_{j}=w$, then $\tau_{i}(B_{i})\cap \tau_{j}(B_{j})=\tau_{i}(w)=\tau_{j}(w)$, and if $B_{i}\cap B_{j}=\varnothing$, then $\tau_{i}(B_{i})\cap \tau_{j}(B_{j})=\varnothing$. We claim that $\tau_{i}$ is also an $mvd$-coloring of $B_{i}$. Otherwise, assume that $\tau_{1}$ is not an $mvd$-coloring of $B_{1}$ and we assign some new colors to the vertices in $B_{1}$ (except for the cut-vertices). Let $g_{1}$ be the $mvd$-coloring of $B_{1}$, then $|g_{1}(B_{1})|>|\tau_{1}(B_{1})|$. Now we get a vertex-coloring $g$ of $G$, and $g$ is an $MVD$-coloring of $G$. In this case, $|g(G)|>|T(G)|$, which contradicts the maximality of $\tau$.
\end{proof}

For a graph $G$ with $r$ blocks, we have the following corollary.

\begin{cor}\label{blockrelation}
If connected graph $G$ has $r$ blocks $B_{1}, B_{2},$ $\ldots , B_{r}$, then $mvd(G)=\big(\sum\limits_{i=1}^{r} mvd(B_{i})\big)-r+1$.
\end{cor}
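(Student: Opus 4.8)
The plan is to derive Corollary~\ref{blockrelation} directly from Theorem~\ref{everyblock} together with the overlap structure of the block decomposition, rather than to re-prove anything from scratch. First I would recall the two standard structural facts about blocks of a connected graph: any two distinct blocks share at most one vertex, and that shared vertex (when it exists) is a cut-vertex. This gives an inclusion-exclusion/counting relationship between $\sum_{i=1}^{r}|V(B_i)|$ and $|V(G)|$ that is governed entirely by the cut-vertices. The coloring statement of Theorem~\ref{everyblock}, and in particular the precise overlap conditions $\tau_i(B_i)\cap\tau_j(B_j)=\tau_i(w)$ when $B_i\cap B_j=\{w\}$ and $\tau_i(B_i)\cap\tau_j(B_j)=\varnothing$ otherwise, will be the engine: it tells us exactly how many colors are \emph{shared} between blocks in an $mvd$-coloring.

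The main computation is then to count $|\tau(G)|$ for an $mvd$-coloring $\tau$. By Theorem~\ref{everyblock}, each $\tau_i$ is an $mvd$-coloring of $B_i$, so $|\tau_i(B_i)|=mvd(B_i)$. Summing $\sum_{i=1}^{r}|\tau_i(B_i)|$ overcounts $|\tau(G)|$ precisely by the colors that are reused across blocks, and by the overlap conditions these repeats occur exactly at cut-vertices: a cut-vertex lying in $t$ blocks contributes its single color to each of those $t$ blocks but only once to $\tau(G)$, hence is counted $t-1$ times too many. The cleanest way I would organize this is via the block-cut tree of $G$, whose vertex set is the set of blocks together with the set of cut-vertices, and which is a tree on $r+c$ vertices (where $c$ is the number of cut-vertices) with exactly $r+c-1$ edges. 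Each edge of this tree corresponds to an incidence of a cut-vertex with a block, so $\sum_{i}(\text{number of cut-vertices in }B_i)$ equals the number of block-cut-tree edges, namely $r+c-1$.

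Combining these, $|\tau(G)|=\sum_{i=1}^{r}mvd(B_i)-(\text{total number of color-repetitions})$. Each cut-vertex $w$ in $t_w$ blocks forces $t_w-1$ repetitions, so the total number of repetitions is $\sum_{w\text{ cut-vertex}}(t_w-1)=\big(\sum_w t_w\big)-c=(r+c-1)-c=r-1$, where I have used that $\sum_w t_w$ counts block-cut-tree edges and equals $r+c-1$. This yields $mvd(G)=|\tau(G)|=\big(\sum_{i=1}^{r}mvd(B_i)\big)-(r-1)=\big(\sum_{i=1}^{r}mvd(B_i)\big)-r+1$, as claimed. I would also note the edge cases for cleanliness: if $G$ itself is $2$-connected then $r=1$ and the formula reduces to the identity $mvd(G)=mvd(B_1)$, and if $G$ is complete the convention $mvd(K_n)=n$ is consistent since $K_n$ is a single block.

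The step I expect to be the main obstacle is making the color-repetition count fully rigorous, i.e. justifying that in an $mvd$-coloring the colors shared between blocks are \emph{precisely} the cut-vertex colors and nothing more. This is exactly where the overlap conditions from Theorem~\ref{everyblock} are essential: they guarantee that two blocks meeting in a cut-vertex $w$ share only the color $\tau(w)$, and two blocks meeting in no vertex share no color at all, so there is no additional coincidental color reuse to subtract. One has to verify that these pairwise conditions assemble correctly into the global count $\sum_w(t_w-1)=r-1$; the block-cut tree being acyclic is what prevents any cut-vertex's color from being "double-subtracted" through multiple paths, so the tree structure is not a convenience but the crux of the bookkeeping. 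Once that identification is established, the remainder is the elementary tree-edge count above.
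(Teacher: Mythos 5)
Your overall skeleton --- reduce to Theorem~\ref{everyblock} and count the color overlaps between blocks, organized by the block-cut tree --- is the same route the paper intends (the paper states the corollary without a separate proof), and your arithmetic $\sum_{w}(t_w-1)=(r+c-1)-c=r-1$ is correct. The genuine gap is at exactly the step you flag as the crux: the ``overlap conditions'' are not part of the \emph{statement} of Theorem~\ref{everyblock}, and they are \emph{not} true of an arbitrary $mvd$-coloring; they are conditions imposed by construction on the particular coloring built in the sufficiency half of that theorem's proof. Concretely, let $G$ be the $4$-cycle $w_1xw_2y$ with pendant edges $w_1a$ and $w_2b$. The coloring $\tau(w_1)=\tau(w_2)=1$, $\tau(x)=\tau(y)=2$, $\tau(a)=3$, $\tau(b)=4$ is an $MVD$-coloring with $4=mvd(G)$ colors, hence an $mvd$-coloring; yet the two \emph{disjoint} blocks $\{w_1,a\}$ and $\{w_2,b\}$ share color $1$, violating ``disjoint blocks share no color'', and two distinct cut-vertices share a color, which your bookkeeping tacitly excludes. (The total overcount still equals $r-1=2$ here, but only because $w_1$ and $w_2$ happen to lie in a common block --- not for the reason you give.) Worse, proving that an arbitrary $mvd$-coloring has no ``extra'' sharing requires already knowing that extra sharing would push the count below $\big(\sum_{i}mvd(B_i)\big)-r+1$, i.e.\ it requires the formula being proved; so as organized the argument is circular. (The paper's own proof of Theorem~\ref{everyblock} asserts these overlap conditions for arbitrary $mvd$-colorings as well, so you have inherited its gap, but it is a gap nonetheless.)

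The repair is to prove two inequalities rather than make one count. For the lower bound, run your block-cut-tree count on the specific coloring constructed in the sufficiency direction of Theorem~\ref{everyblock}: give each $B_i$ an $mvd$-coloring from a fresh palette and identify colors only at cut-vertices; for this coloring the overlap conditions hold by fiat, your count gives exactly $\big(\sum_{i=1}^{r}mvd(B_i)\big)-r+1$ colors, and the theorem certifies it is an $MVD$-coloring, whence $mvd(G)\geq\big(\sum_{i=1}^{r}mvd(B_i)\big)-r+1$. For the upper bound, take any $MVD$-coloring $f$ of $G$ and order the blocks so that each $B_i$ $(i\geq 2)$ meets $B_1\cup\cdots\cup B_{i-1}$ in a cut-vertex (possible since the block-cut tree is connected); the color of that cut-vertex is already counted, so $B_i$ contributes at most $|f_i(B_i)|-1\leq mvd(B_i)-1$ new colors by Lemma~\ref{induced}, and induction gives $|f(G)|\leq\big(\sum_{i=1}^{r}mvd(B_i)\big)-(r-1)$. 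The two bounds together yield the corollary, and as a byproduct (rather than an ingredient) one recovers that every $mvd$-coloring restricts to an $mvd$-coloring on each block.
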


Theorem\ref{everyblock} and Corollary \ref{blockrelation} guarantee the existence of Algorithm \ref{algorithm3}, which is also important in the subsequent study of $mvd$-coloring. It is worth mentioning that in line 4, we need to find a graph $thBlock$ in the type set that is isomorphic to $Block$, and color $Block$ with the color of $thBlock$ according to the correspondence of their vertices (each time coloring a different block with new colors). The isomorphism algorithm is not the focus of this paper, so we do not elaborate on it. Since cut-vertices may be colored multiple times, in lines 6-16 we update the colors of the cut-vertices and finally give the $mvd$-coloring of $G$.

\makeatletter
\newenvironment{breakable2algorithm}
  {
   \begin{center}
     \refstepcounter{algorithm}
     \hrule height.8pt depth0pt \kern2pt
     \renewcommand{\caption}[2][\relax]{
       {\raggedright\textbf{\ALG@name~\thealgorithm} ##2\par}
       \ifx\relax##1\relax
         \addcontentsline{loa}{algorithm}{\protect\numberline{\thealgorithm}##2}
       \else
         \addcontentsline{loa}{algorithm}{\protect\numberline{\thealgorithm}##1}
       \fi
       \kern2pt\hrule\kern2pt
     }
  }{
     \kern2pt\hrule\relax
   \end{center}
  }
\makeatother

\begin{breakable2algorithm}\label{algorithm3}
\caption{Compute $mvd(G)$ and give an $mvd$-coloring of $G$}
\hspace*{0.02in} {\bf Input:}
$CutVerticesSet$ and $BlocksSet$ from Algorithm $2$\\
\hspace*{0.02in} {\bf Output:}
$mvd(G)$, $mvd$-coloring of $G$
\begin{algorithmic}[1]
\State $mvd(G) \gets 0$
\State $r \gets card(BlocksSet)$
\For{\textbf{every} block $Block \in BlocksSet$}
    \State give $Block$ an $mvd$-coloring with new colors
    \State $mvd(G) \gets mvd(G)+mvd(Block)$
\EndFor
$mvd(G) \gets mvd(G)-r+1$
\For{\textbf{every} vertex $u \in CutVerticesSet$}
    \State color($u$) $\gets$ null
\EndFor
\For{\textbf{every} block $Block \in BlocksSet$}
    \For{\textbf{every} vertex $v \in Block$}
        \If{$v \in CutVerticesSet$}
           \State $CutVertex \gets $cut-vertex in $CutVerticesSet$ corresponding to $v$
           \If{color($CutVertex$)=null}
               \State color($CutVertex$) $\gets$ color($v$)
           \Else
               \State color($v$) $\gets$ color($CutVertex$)
               \State update the colors of all vertices in $Block$ according to the $mvd$-coloring of $Block$
           \EndIf
        \EndIf
    \EndFor
\EndFor
\end{algorithmic}
\end{breakable2algorithm}

\subsection{ An algorithm example }
\quad \quad The graph is stored into the computer by inputting its adjacency matrix. We conclude this section by running Algorithm \ref{algorithm2} and Algorithm \ref{algorithm3} with an example. The main part of the code is shown in Appendix $B$ and the complete code is given on Github: https://github.com/fumiaoT/mvd-coloring.git.

\noindent$\displaystyle \textbf{Example 2.2.}$ The $mvd$-coloring of \textcircled{\small {9}} and \textcircled{\small {11}} in Appendix $A$. 9 VERTEX are known, they are elements of the type set and are stored at the $resource$ $directory$.
The graph $G$ is shown in Fig. \ref{6}$(1)$ and is stored into the computer by entering its adjacency matrix at the $main(\cdot)$ function in the $GraphContext\{\cdot\}$ class. The $DFSTarjan(\cdot)$ function in the $BlockAndCutVerticesBuilder\{\cdot\}$ class performs block decomposition on $G$. The $getIsomorphicColors(\cdot)$ function in the $IsomorphicJudger\{\cdot\}$ class looks for graphs in the type set that are isomorphic to the blocks of $G$. The $MvdColorMarker$ class performs $mvd$-coloring on $G$.

The output results are listed at the end of Appendix $B$.
The block decomposition algorithm outputs a cut-vertices set $H$ and two blocks whose vertices and their adjacency matrices are shown in $Block\ num\ 1$ and $Block\ num\ 2$ respectively (also Fig. \ref{7}$(1)(3)$).
The isomorphism algorithm looks for graphs in the type set that are isomorphic to the blocks of $G$ and outputs their correspondence at $Isomorphic$ $Relationship$ (also Fig. \ref{7}).
The $mvd$-coloring algorithm updates the colors of the cut-vertices and outputs the $mvd$-coloring of $G$ at $Coloring$ $Vertices$ $Results$ (also Fig. \ref{6}$(2)$).

\begin{figure}
\centering
  \includegraphics[height=3.5cm]{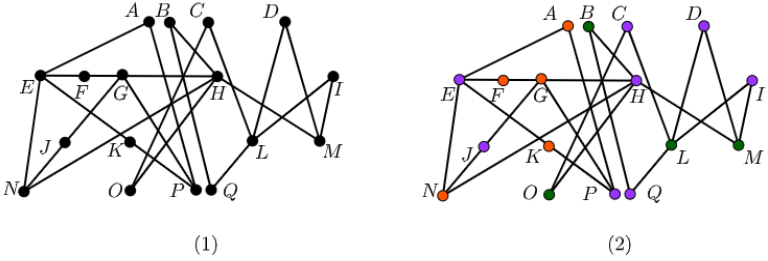}
  \caption{An example and its mvd-coloring}
  \label{6}
\end{figure}

\begin{figure}
\centering
  \includegraphics[height=3.8cm]{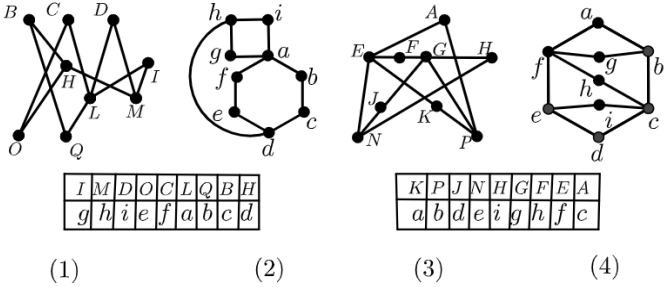}
  \caption{The output results of block decomposition and isomorphism}
  \label{7}
\end{figure}

Our algorithm can efficiently give an $mvd$-coloring of $G$ provided that for any block of $G$, there is a graph in the type set that is isomorphic to it. Otherwise, the algorithm is also able to select blocks for which no graph in the type set can be isomorphic. Meanwhile, the $mvd$-coloring of these blocks, once solved, can further enrich our type set.

For intricate graphs abstracted from real-world problems, our algorithm can perform $mvd$-coloring quickly and accurately, reducing a lot of tedious work. In the future we can make an interactive graphical interface where the user can draw the graphs on your web page and our algorithm computes all the cut-vertices and blocks based on the graphs. Also the blocks are compared with the graphs in the type set and $mvd$-coloring is performed. Finally, the cut-vertices, the colored graphs are labeled on the user's graphs. This algorithm is convenient and applicable as a mathematical tool to solve the $mvd$-coloring problem. Moreover, the algorithm can be modified in a similar way to solve a series of coloring problems, such as $rd$-coloring, $md$-coloring.

\section{ Results for special graphs }
\quad \quad Inspired by the localization principle in Theorem \ref{everyblock}, we obtain upper bounds on $mvd(G)$ for some graphs associated with minimal 2-connected graphs in this section. Moreover, when $mvd(G)$ is large and all blocks of $G$ are minimally $2$-connected triangle-free graphs, we characterize $G$.
\subsection{ The upper bound of $mvd(G)$ }
\quad \quad
\begin{lem}\label{tree}
If $G$ is a cycle of order $n\geq 4$, then $mvd(G)=\left\lfloor {\frac{n}{2}}\right\rfloor$.
\end{lem}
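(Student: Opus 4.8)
The plan is to first turn the monochromatic-cut condition on a cycle into a transparent statement about colors on arcs, and then prove the two inequalities separately. Write $C_n = v_0 v_1 \cdots v_{n-1} v_0$ with indices read modulo $n$. For a nonadjacent pair $\{v_i, v_j\}$, deleting $v_i$ and $v_j$ leaves exactly two arcs $A$ and $B$, and these arcs (together with $v_i, v_j$) are the only two $v_i$-$v_j$ paths in $C_n$. I claim $\tau$ is an $MVD$-coloring if and only if for every nonadjacent pair $\{v_i, v_j\}$ some color occurs on a vertex of $A$ and also on a vertex of $B$. For the ``if'' direction, if $u \in A$ and $w \in B$ satisfy $\tau(u) = \tau(w)$, then $\{u,w\}$ meets both arcs, kills both $v_i$-$v_j$ paths, and is monochromatic, hence is a monochromatic $v_i$-$v_j$ cut. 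For ``only if'', any $v_i$-$v_j$ cut must contain a vertex of each arc (otherwise the untouched arc is an intact $v_i$-$v_j$ path), and if that cut is monochromatic those two vertices share a color. This reformulation is the engine for both bounds.

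For the upper bound I would use the characterization to show every color class has at least two vertices. Suppose a color $c$ sits on a single vertex $v_i$. Apply the condition to the pair $\{v_{i-1}, v_{i+1}\}$, which is nonadjacent precisely because $n \ge 4$; one of its arcs is the single vertex $v_i$, so the only candidate for a shared color is $c$, which would then have to reappear on the other arc, contradicting uniqueness. Hence each color is used at least twice and $mvd(C_n) \le \lfloor n/2 \rfloor$.

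For the matching lower bound I would exhibit an explicit coloring. Put $m = \lfloor n/2 \rfloor$ and assign $v_i$ and $v_{i+m}$ the same color for $i = 0, \ldots, m-1$; when $n = 2m+1$ the leftover vertex $v_{2m}$ is given the color of the class $\{v_0, v_m\}$, making that class $\{v_0, v_m, v_{2m}\}$. This uses exactly $m$ colors, so it remains to verify the arc condition, and by rotation it suffices to treat an arbitrary nonadjacent pair $\{v_0, v_k\}$ with $2 \le k \le n-2$, whose arcs are $A = \{v_1,\dots,v_{k-1}\}$ and $B = \{v_{k+1},\dots,v_{n-1}\}$. A short index check does it: when $k \le m$ the partner $v_{1+m}$ of $v_1 \in A$ lands in $B$, and when $k \ge m+1$ the vertices $v_m \in A$ and $v_{n-1} \in B$ carry a common color (in the odd case this is exactly where the leftover vertex earns its keep). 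These two ranges cover all admissible $k$, so the characterization holds and the coloring is an $mvd$-coloring with $\lfloor n/2 \rfloor$ colors.

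I expect the main obstacle to be the lower-bound verification rather than the bound itself: one must confirm that the distance-$m$ pairing has enough ``reach'' to be crossed by \emph{every} admissible split of the cycle, which amounts to checking that the inequalities $1+m \le n-1$ and $m \le k$ (or their reflected forms) genuinely partition the range of $k$, and separately handling the single parity-induced leftover vertex in the odd case so that it never creates a split that isolates its color class. Once the clean characterization in the first paragraph is in place, both the upper bound and this case analysis become routine.
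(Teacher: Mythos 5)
Your proposal follows essentially the same route as the paper: the paper's lower-bound coloring (color $v_j$ by $j \bmod \lfloor n/2\rfloor$) produces exactly your antipodal classes, including the one triple class in the odd case, and its upper bound is the same pigeonhole argument via a singleton color class and the pair $\{v_{i-1},v_{i+1}\}$. Your arc-crossing characterization and your upper bound are correct. The problem is in the lower-bound verification, which you rightly identified as the delicate part. The first error is in the even case: for $n=2m$ your classes are $\{v_i,v_{i+m}\}$ with $0\le i\le m-1$, so $v_m$ carries the color of the class $\{v_0,v_m\}$ while $v_{n-1}=v_{2m-1}$ carries the color of the class $\{v_{m-1},v_{2m-1}\}$; these are distinct colors whenever $m\ge 2$, i.e., for every even $n\ge 4$. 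Concretely, for $n=6$ and $k=4$ your witnesses are $v_3$ (colored with $v_0$) and $v_5$ (colored with $v_2$). The coloring itself is fine for that split --- $\{v_2,v_5\}$ is a monochromatic cut --- but your witness pair is not monochromatic. A correct witness for $k\ge m+1$ is $\{v_{k+1-m},v_{k+1}\}$: this is a color class since $0\le k+1-m\le m-1$, and $v_{k+1-m}\in A$ (using $k\ge m+1$ and $m\ge 2$) while $v_{k+1}\in B$.

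The second error is the reduction ``by rotation it suffices to treat $\{v_0,v_k\}$.'' That reduction is valid in the even case, where the partition into classes $\{v_i,v_{i+m}\}$ is rotation-invariant, but it fails in the odd case: the triple class $\{v_0,v_m,v_{2m}\}$ singles out three special vertices, a rotation carries your coloring to a genuinely different coloring, and pairs $\{v_j,v_{j+k}\}$ with $j\ne 0$ are simply not covered by your argument. (Ironically, your explicit witnesses succeed in the odd case only when $j=0$, and your symmetry reduction is legitimate only in the even case, so as written neither parity is fully proved.) A uniform repair: first check that in both parities every vertex $v_a$ has a same-colored vertex in $\{v_{a+m},v_{a+m+1}\}$ (indices mod $n$; in the even case the partner is at distance exactly $m$). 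Then for an arbitrary nonadjacent pair let $A$ be the shorter arc, say $A=\{v_{j+1},\dots,v_{j+k-1}\}$ with $2\le k\le m$; the same-colored partner of $v_{j+1}$ lies in $\{v_{j+1+m},v_{j+2+m}\}\subseteq B$, because $k+1\le m+1$ and, in the odd case, $m+2\le n-1$ holds for all odd $n\ge 5$, while in the even case only $m+1\le n-1$ is needed. With these two repairs your argument is complete and matches the paper's intended (but unstated) verification.
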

\begin{proof}
Let $G=v_{1}e_{1}v_{2}e_{2}\ldots v_{n-1}e_{n-1}v_{n}e_{n}v_{1}$. We define a vertex-coloring $\tau: V(G)\rightarrow \big[\left\lfloor {\frac{n}{2}}\right\rfloor\big]$ such that if $j\equiv i$ $(mod$ $\left\lfloor {\frac{n}{2}}\right\rfloor)$, then color $v_{j}$ by $i$, where $i\in\big[\left\lfloor {\frac{n}{2}}\right\rfloor\big]$, $j\in[n]$. It is easy to verify that for any two nonadjacent vertices $x$ and $y$ of $G$, there exists a monochromatic $x$-$y$ vertex cut. Therefore, $\tau$ is an $MVD$-coloring, $mvd(G)\geq \left\lfloor {\frac{n}{2}}\right\rfloor$.

For $n\geq 4$, if $mvd(G)\geq \left\lfloor {\frac{n}{2}}\right\rfloor+1$ and there is an $MVD$-coloring $\tau$ of $G$ such that $|\tau(G)|\geq \left\lfloor {\frac{n}{2}}\right\rfloor+1$, then there exists a color $i$ in $\tau$ that colors only one vertex $v_{i}$ of $G$. Otherwise, $V(G)\geq 2|\tau(G)| \geq 2(\left\lfloor {\frac{n}{2}}\right\rfloor+1)\geq n+1$. Since $G$ is a 2-connected graph, the monochromatic $v_{i-1}$-$v_{i+1}$ vertex cut must contain $v_{i}$ and some vertex in $G-\{v_{i-1}, v_{i}, v_{i+1}\}$, which contradicts the fact that $\tau$ is an $MVD$-coloring.
\end{proof}

\begin{rem}\label{remark 2}
$mvd(G)=3$, where $G$ is a $3$-cycle.
\end{rem}

A $nest\ sequence$ of graphs is a sequence $G_0, G_1, \ldots, G_t$ of graphs such that $G_i\subset G_{i+1}, 0\leq i<t$. An $ear\ decomposition$ of a $2$-connected graph $G$ is a nest sequence of subgraphs $G_0, G_1, \ldots, G_t$ of $G$ satisfying the following conditions: $(i)$ $G_0$ is a cycle of $G$; $(ii)$ $G_{i+1}=G_{i}\cup P_{i}$, where $P_{i}$ is an ear of $G_i$, $0\leq i<t$; $(iii)$ $G_{t}=G$.

\begin{thm}\label{2bound}
Let $G$ be a $2$-connected triangle-free graph and every ear $P_{i}(0\leq i< t)$ has internal vertices. Then $mvd(G)\leq \left\lfloor {\frac{n}{2}}\right\rfloor$, and the bound is sharp.
\end{thm}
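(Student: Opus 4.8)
The plan is to prove the upper bound by showing that in any $MVD$-coloring of $G$ no color can be used on exactly one vertex; a counting argument then forces at most $\lfloor n/2 \rfloor$ colors. This mirrors the strategy in the proof of Lemma \ref{tree}, where a singleton color class was the source of the contradiction. Concretely, I would let $\tau$ be an $MVD$-coloring and suppose, for contradiction, that some color $i$ is used on a single vertex $v$. Since $G$ is $2$-connected, $v$ has at least two neighbors $x$ and $y$; since $G$ is triangle-free, $x$ and $y$ are nonadjacent, so $\tau$ must supply a monochromatic $x$-$y$ vertex cut $S$.

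The crux is to pin down $S$. Because $x$-$v$-$y$ is a path and a vertex cut separating $x$ and $y$ contains neither $x$ nor $y$, the cut $S$ must contain $v$ (otherwise this path survives in $G-S$ and joins $x$ to $y$). As $S$ is monochromatic and $v\in S$, every vertex of $S$ carries color $i$; but $v$ is the only vertex of color $i$, so $S=\{v\}$. Then $\{v\}$ separates $x$ from $y$, i.e.\ $v$ is a cut-vertex, contradicting $2$-connectivity. Hence every color class has size at least $2$, so if $\tau$ uses $k$ colors then $n\geq 2k$ and $k\leq \lfloor n/2 \rfloor$; applying this to an $mvd$-coloring gives $mvd(G)\leq \lfloor n/2 \rfloor$.

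For sharpness I would exhibit graphs in the stated class that attain the bound. A cycle $C_n$ with $n\geq 4$ has the trivial ear decomposition ($t=0$, so the hypothesis on ears is vacuous) and satisfies $mvd(C_n)=\lfloor n/2 \rfloor$ by Lemma \ref{tree}. For a genuinely ear-built witness, $K_{2,3}$ (equivalently, the theta graph whose three $u$-$w$ paths each have one internal vertex) is $2$-connected and triangle-free, admits an ear decomposition in which the single ear has an internal vertex, and a short check gives $mvd(K_{2,3})=2=\lfloor 5/2 \rfloor$. Either example shows the bound cannot be improved within the class.

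I expect the main subtlety to be conceptual rather than computational: the hypothesis that every ear has internal vertices is not actually needed for the upper bound, which already follows from $2$-connectedness and triangle-freeness via the singleton-color argument above. If one instead attempts the route suggested by the placement of the theorem, namely induction along the ear decomposition $G_0\subset G_1\subset\cdots\subset G_t$, the obstacle is that $MVD$-colorings do not restrict or extend cleanly when an ear is added: a new ear creates new cycles and new nonadjacent pairs whose separating cuts may pass through already-colored parts of $G_i$, so a hypothesis about $mvd(G_i)$ does not localize. The singleton-color argument sidesteps this global interaction entirely, which is why I would adopt it; the ear-with-internal-vertices condition is then best read as delimiting the class in which the bound is sharp and as the hypothesis relevant to the structural characterization pursued later in the paper.
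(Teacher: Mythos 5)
Your proof is correct, but it takes a genuinely different route from the paper's. The paper argues by induction along the ear decomposition: it restricts an $mvd$-coloring $\tau$ to $G_{t-1}$ (this is where the hypothesis that every ear has internal vertices is used---it makes $G_{t-1}$ an \emph{induced} subgraph, so Lemma \ref{induced} applies), bounds $|\tau(G_{t-1})|$ by the induction hypothesis, and then shows that every color of $\tau(G)-\tau(G_{t-1})$ must appear on at least two internal vertices of the last ear $P_{t-1}$---and that last step is precisely your singleton-color argument, applied locally to the cycle $C=L\cup P_{t-1}$. What you have done is globalize that one step: 2-connectivity and triangle-freeness alone already rule out a singleton color class anywhere in $G$, since a monochromatic cut separating two neighbors of a uniquely-colored vertex $v$ would have to equal $\{v\}$, making $v$ a cut-vertex; the count $n\geq 2\,|\tau(G)|$ then gives $mvd(G)\leq \left\lfloor \frac{n}{2}\right\rfloor$ with no induction and no ear decomposition at all. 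Your version is both shorter and strictly more general: the bound holds for \emph{every} 2-connected triangle-free graph, e.g.\ for $K_{3,3}$, to which the theorem as stated does not even apply (an edge count shows any ear decomposition of $K_{3,3}$ must use an ear of length one); it would also make Theorem \ref{mini2bound} an immediate consequence of Lemma \ref{trianglefree}, bypassing Lemma \ref{internalofPt} entirely. The one inaccuracy is your closing speculation that an induction along the ear decomposition would founder because colorings ``do not localize'': the paper's induction does go through, exactly because the internal-vertices hypothesis keeps each $G_i$ induced so that Lemma \ref{induced} supplies the needed restriction; that hypothesis is scaffolding for the paper's proof rather than the boundary of where the bound is true. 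Sharpness you settle the same way the paper does, via cycles of order $n\geq 4$ and Lemma \ref{tree}.
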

\begin{proof}
Let $F=\{G_0, G_1, \ldots, G_t\}$ be an ear decomposition of $G$. We use induction on $|F|$.

Basis step: $|F|=1$, then $G$ is a cycle, the theorem holds by Lemma \ref{tree}.

Induction step: $|F|=t+1>1$, let $\tau$ be an $mvd$-coloring of $G$. Since $|P_{t-1}|\geq 3$, $G_{t-1}$ is a connected vertex induced subgraph of $G$ and $\tau$ restricted to $G_{t-1}$ is an $MVD$-coloring of $G_{t-1}$. By induction, we have
$$|\tau(G_{t-1})|\leq mvd(G_{t-1})\leq \left\lfloor {\frac{|G_{t-1}|}{2}}\right\rfloor=\left\lfloor {\frac{n-|P_{t-1}|+2}{2}}\right\rfloor.$$
Suppose that the endpoints of $P_{t-1}$ are $a$, $b$, and $L$ is the shortest $a,b$-path in $G_{t-1}$. Since $P_{t-1}$ is the last ear, cycle $C=L\cup P_{t-1}$ is a connected vertex induced subgraph of $G$. By Lemma \ref{induced}, $\tau$ restricted to $C$ is an $MVD$-coloring of $C$, therefore there are at most $|P_{t-1}|-2$ vertices are assigned colors in $\tau(G)-\tau(G_{t-1})$. Since $|C|\geq 4$, each color of $\tau(G)-\tau(G_{t-1})$ colors at least two internal vertices of $P_{t-1}$. Otherwise, if $j\in \tau(G)-\tau(G_{t-1})$ and only colors one internal vertex of $P_{t-1}$, say $x_{j}$, then $x_{j-1}, x_{j+1}$ are two nonadjacent vertices of $G$ and monochromatic $x_{j-1}$-$x_{j+1}$ vertex cut must contains $x_{j}$, a contradiction. Then $|\tau(G)-\tau(G_{t-1})|\leq \left\lfloor {\frac{|P_{t-1}|-2}{2}}\right\rfloor$. So,

\begin{equation}
\begin{aligned}
mvd(G)&=|\tau(G)|=|\tau(G_{t-1})|+|\tau(G)-\tau(G_{t-1})|\\
&\leq \left\lfloor {\frac{n-|P_{t-1}|+2}{2}}\right\rfloor+\left\lfloor {\frac{|P_{t-1}|-2}{2}}\right\rfloor\\
&\leq\left\lfloor {\frac{n}{2}}\right\rfloor.
\end{aligned}
\end{equation}

Above all,  $mvd(G)\leq \left\lfloor {\frac{n}{2}}\right\rfloor$. Furthermore, the bound is sharp for cycles of order $n\geq 4$.
\end{proof}

\begin{lem}\cite{ref15}\label{internalofPt}
Let $G$ be a minimally $2$-connected graph, and $G$ is not a cycle. Then $G$ has an ear decomposition $G_0, G_1, \ldots, G_{t}\ (t\geq 1)$ satisfying the following conditions:

$(i)$ $G_{i+1}=G_{i}\cup P_{i}\ (0\leq i<t)$, where $P_{i}$ is an ear of $G_{i}$ in $G$ and at least one vertex of $P_{i}$ has degree two in $G$,

$(ii)$ each of the two internally disjoint paths in $G_0$ between the endpoints of $P_0$ has at least one vertex with degree two in $G$.
\end{lem}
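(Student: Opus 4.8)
The plan is to induct on the number of ears $t$ in an ear decomposition of $G$ (equivalently on $|E(G)|-|V(G)|$), constructing the decomposition in reverse by peeling off one ear at a time. The invariant I would maintain is that every ear I peel is a \emph{whole segment}: a maximal path whose two endpoints are branch vertices (degree $\geq 3$ in $G$) and all of whose internal vertices have degree $2$ in $G$. Peeling whole segments is what makes condition $(i)$ essentially automatic, since the internal vertices of such a segment are never endpoints of another ear and therefore keep degree $2$ in $G$ throughout. The base case is $t=1$, where $G$ is a theta graph (two vertices joined by three internally disjoint paths): here minimal $2$-connectivity forces each of the three paths to contain an internal vertex, because a path consisting of a single edge $ab$ would be a removable chord of the cycle formed by the other two paths, contradicting the fact that $G-\{e\}$ is not $2$-connected for any $e$. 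This simultaneously yields $(i)$ for $P_0$ and $(ii)$ for the two arcs of $G_0$.

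For the inductive step I would first record two classical structural facts about a minimally $2$-connected non-cycle graph $G$: every cycle of $G$ contains a vertex of degree $2$, and consequently the edges joining two branch vertices directly (the unsubdivided segments) induce a forest. I would then argue that some whole segment $P$ with a genuinely degree-$2$ interior can be removed so that $G':=G-\mathrm{int}(P)$ is still minimally $2$-connected. Applying the induction hypothesis to $G'$ produces an ear decomposition $G_0,\ldots,G_{t-1}=G'$ satisfying $(i)$ and $(ii)$, and appending $P$ as the final ear $P_{t-1}$ gives a decomposition of $G$. Condition $(i)$ for $P_{t-1}$ holds since its interior vertices are degree $2$ in $G$, and the conditions for the earlier ears transfer from $G'$ to $G$ provided the degree-$2$ witnesses supplied by the induction are genuine degree-$2$ vertices of $G$ and not the two endpoints of $P$ (which may drop from degree $3$ in $G$ to degree $2$ in $G'$); guaranteeing that the induction can be run so as to avoid these endpoints is part of the careful choice discussed next.

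The hard part, and where I expect the real work to lie, is twofold. First, I must establish that a removable whole segment with a subdivided interior always exists and that its removal preserves minimal $2$-connectivity; this is delicate because deleting the interior of a segment that is ``essential'' to the branch structure can destroy $2$-connectivity, and because a segment obtained by merging across a demoted branch vertex may consist only of direct (forest) edges and thus carry no degree-$2$ witness at all. I would control this precisely through the forest structure of the direct edges, peeling subdivided segments in an order that never forces an all-direct-edge ear and never promotes an endpoint of a deleted segment to a witness. Second, condition $(ii)$ must survive down to the terminal theta: the reduction must be arranged so that the last remaining theta has all three of its paths subdivided, equivalently so that each arc of $G_0$ between the endpoints of $P_0$ contains a degree-$2$ vertex. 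Since among the three paths of any theta at most one can be an all-forest path, the crux is to choose the terminal theta (that is, the order of peeling) so that this single possibly-unsubdivided path is avoided; proving that such a choice always exists in a minimally $2$-connected non-cycle graph is, I expect, the main obstacle of the argument.
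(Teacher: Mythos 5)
First, a point of comparison: the paper offers no proof of this lemma --- it is imported verbatim from Li and Liu \cite{ref15} --- so your proposal can only be judged on its own terms, not against an internal argument. On those terms, your theta-graph base case is sound, but the proposal as a whole is a strategy outline, not a proof: the two steps you yourself label ``the hard part'' and ``the main obstacle'' are precisely the mathematical content of the statement, and neither is carried out. Moreover, the invariant announced at the start --- peel only \emph{whole segments} of $G$, i.e., paths whose internal vertices have degree $2$ in $G$ itself --- cannot be maintained in general. Peeling such segments only ever deletes edges of the branch-vertex core $H$ (the multigraph obtained from $G$ by suppressing degree-$2$ vertices) and never deletes vertices of $H$; hence if the process could run all the way down to a cycle, that cycle would pass through every branch vertex of $G$, forcing $H$ to be Hamiltonian. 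This fails already for the full subdivision of the Petersen graph, which is minimally $2$-connected (deleting any edge leaves a degree-$1$ vertex) but whose core is non-Hamiltonian. So ears that merge across demoted branch vertices are unavoidable, and for exactly those ears condition $(i)$ is what needs proving --- the point you defer.

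Second, the two gaps you flag are not equally deep, and it is worth separating them. The first (finding a segment whose interior can be deleted so that minimal $2$-connectivity survives) has a clean resolution you could have completed: since $\delta(H)\geq 3$, the last ear in any ear decomposition of $H$ has no internal vertices, so it is a single edge $e$ with $H-\{e\}$ still $2$-connected; the segment $S$ of $G$ corresponding to $e$ must be subdivided, for otherwise $G-\{e\}$ would be $2$-connected, contradicting minimality; and $G-\mathrm{int}(S)$ is a subdivision of $H-\{e\}$, hence $2$-connected, and is minimally so because any edge removable in $G-\mathrm{int}(S)$ would remain removable in $G$ after the open ear $S$ is glued back on. The second gap --- guaranteeing that every merged ear still contains a vertex of degree $2$ in $G$, and that the peeling can be steered so that both arcs of the terminal cycle $G_0$ between the endpoints of $P_0$ contain such vertices (condition $(ii)$) --- is the crux of the lemma, and the proposal offers nothing for it beyond the expectation that the forest structure of the branch-branch edges will ``control'' the choices. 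Until that argument is supplied, the lemma is not proved.
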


\begin{lem}\cite{ref21}\label{trianglefree}
If $G$ is a minimally $2$-connected graph of order $n\geq 4$, then $G$ contains no triangles.
\end{lem}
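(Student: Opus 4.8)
The plan is to argue by contradiction: suppose $G$ is minimally $2$-connected with $|G|=n\geq 4$ and that $G$ contains a triangle on vertices $a,b,c$. The whole strategy is to exploit minimality \emph{locally} on the three edges of this triangle. Since $G$ is $2$-connected it is $2$-edge-connected, so for each edge $e$ the graph $G-e$ is still connected; minimality then forces $G-e$ to contain a cut-vertex. The first step is to pin down \emph{which} vertex this cut-vertex must be.

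First I would prove the following claim: any cut-vertex of $G-ab$ is exactly $c$. Write $H=G-ab$ and let $w$ be a cut-vertex of $H$. Because $G$ is $2$-connected, $G-w$ is connected, and $H-w=(G-w)-ab$; hence $ab$ is a bridge of $G-w$ and $H-w$ has exactly two components $P_a\ni a$ and $P_b\ni b$. Since the triangle gives $c\sim a$ and $c\sim b$ with both edges $ca,cb$ still present in $H$, a vertex $c\neq w$ would lie in $H-w$ and join $P_a$ to $P_b$, a contradiction; thus $w=c$. Consequently $c$ is a cut-vertex of $G-ab$, equivalently $ab$ is a bridge of $G-c$, and $G-c-ab$ splits into two components $A_1\ni a$ and $A_2\ni b$. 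By the same argument applied to the edge $ca$ (whose opposite triangle vertex is $b$), the edge $ca$ is a bridge of $G-b$.

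Next I would invoke the hypothesis $n\geq 4$. Since $|A_1|+|A_2|=n-1\geq 3$, one side, say $A_1$, contains a vertex $d\neq a$. I then claim that in $G$ every edge leaving $A_1\setminus\{a\}$ ends at $a$: an edge from $A_1\setminus\{a\}$ into $A_2$ would survive in $G-c-ab$ and contradict $A_1,A_2$ being distinct components, while an edge from $A_1\setminus\{a\}$ to $c$ is impossible because, using connectedness of $A_1$ inside $G-b$, it would produce a $c$--$a$ path avoiding $ca$ and contradict $ca$ being a bridge of $G-b$. Hence deleting $a$ separates the nonempty set $A_1\setminus\{a\}$ (which contains $d$) from $b$, so $a$ is a cut-vertex of $G$, contradicting $2$-connectivity. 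This contradiction shows that $G$ is triangle-free.

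The main obstacle is the structural claim that the cut-vertex of $G-ab$ is forced to be the third triangle vertex $c$, together with the careful accounting of edges across the cut: one must be certain that the only way out of $A_1\setminus\{a\}$ is through $a$, which is exactly where the essentiality of the \emph{second} edge $ca$ (and not merely of $ab$) enters. The role of $n\geq 4$ is equally delicate — it is precisely what guarantees a spare vertex $d$ on one side, turning the bridge structure into a genuine cut-vertex of $G$; for $n=3$ (the triangle $K_3$) no such $d$ exists and the statement indeed fails.
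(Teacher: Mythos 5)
Your proof is correct, but note that there is nothing in the paper to compare it against: the paper states this lemma as a quoted result of Plummer \cite{ref21} and gives no proof at all. Your argument is therefore a genuinely different (and self-contained) route. Checking its key steps: the cut-vertex $w$ of $G-ab$ supplied by minimality cannot lie in $\{a,b\}$, since for such $w$ one has $(G-ab)-w=G-w$, which is connected by $2$-connectivity; and since both edges $ca$, $cb$ survive in $(G-ab)-w$, the path $a\,c\,b$ would reconnect the components $P_a$ and $P_b$ unless $w=c$ --- so the cut-vertex is forced to be $c$, exactly as you claim. The edge accounting is also sound: an edge from $A_1\setminus\{a\}$ to $A_2$ would survive in $G-c-ab$ and merge the two components, while an edge from $x\in A_1\setminus\{a\}$ to $c$, concatenated with a path from $x$ to $a$ inside $A_1$ (whose edges avoid both $b$ and the edge $ca$), would join $c$ to $a$ in $G-b-ca$, contradicting that $ca$ is a bridge of $G-b$. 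With $n\geq 4$ supplying the extra vertex $d$, the vertex $a$ becomes a cut-vertex of $G$, the desired contradiction. Two small points you should make explicit in a final write-up: (i) the exclusion $w\notin\{a,b\}$ discussed above, which your text glosses over; and (ii) the ``say $A_1$'' step is a genuine w.l.o.g.\ only because the whole configuration is symmetric under $a\leftrightarrow b$, which requires also recording that $cb$ is a bridge of $G-a$ (true by the same argument applied to the edge $cb$). For comparison, Plummer's original treatment derives triangle-freeness from the structure theory of minimal blocks --- in particular from the characterization that in a minimally $2$-connected graph no cycle has a chord, of which the triangle statement is an easy corollary (a triangle plus two disjoint paths from an outside vertex yields a cycle with chord $ab$). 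That machinery gives much more global information; your proof buys self-containedness and elementarity, at the cost of being tailored specifically to triangles.
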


\begin{thm}\label{mini2bound}
If $G$ is a minimally $2$-connected graph of order $n\geq 4$, then $mvd(G)\leq \left\lfloor {\frac{n}{2}}\right\rfloor$.
\end{thm}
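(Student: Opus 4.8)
The plan is to reduce everything to Theorem \ref{2bound} by checking that a minimally $2$-connected graph of order $n\geq 4$ satisfies all of its hypotheses. First I would dispose of the degenerate case: if $G$ is itself a cycle, then Lemma \ref{tree} gives $mvd(G)=\left\lfloor\frac{n}{2}\right\rfloor$ directly, so from now on I may assume that $G$ is not a cycle.

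Next I would assemble the structural facts already available. By Lemma \ref{trianglefree}, every minimally $2$-connected graph of order $n\geq 4$ is triangle-free, so the triangle-free hypothesis of Theorem \ref{2bound} holds automatically. Since $G$ is not a cycle, Lemma \ref{internalofPt} furnishes an ear decomposition $G_0, G_1, \ldots, G_t$ (with $t\geq 1$) in which $G_0$ is a cycle and every ear $P_i$ ($0\leq i<t$) contains at least one vertex of degree two in $G$.

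The crux is to upgrade this degree condition into the ``every ear has internal vertices'' hypothesis demanded by Theorem \ref{2bound}. I would argue as follows: the two endpoints of an ear $P_i$ lie in $G_i$, and since $G_0$ is a cycle and every subsequent ear only adds edges, each $G_i$ is $2$-connected and hence every vertex of $G_i$ already has degree at least two in $G_i$. Adjoining the ear $P_i$ supplies each of its two endpoints with one further incident edge, so each endpoint has degree at least three in $G_{i+1}$, and therefore degree at least three in $G$. Consequently the degree-two vertex guaranteed by Lemma \ref{internalofPt} cannot be an endpoint of $P_i$; it must be an internal vertex. Thus every ear $P_i$ has internal vertices, exactly as Theorem \ref{2bound} requires.

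With $G$ triangle-free and every ear of the chosen decomposition possessing internal vertices, Theorem \ref{2bound} applies and yields $mvd(G)\leq \left\lfloor\frac{n}{2}\right\rfloor$, completing the argument. I expect the middle step to be the main obstacle: one must notice that the seemingly weak ``degree-two vertex'' condition in Lemma \ref{internalofPt} in fact rules out single-edge ears and thereby delivers precisely the internal-vertex hypothesis that Theorem \ref{2bound} needs. This observation is the hinge that lets the general upper bound piggyback on the triangle-free ear-decomposition bound, and everything else is bookkeeping.
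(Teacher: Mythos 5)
Your proposal is correct and takes essentially the same route as the paper: dispose of the cycle case via Lemma \ref{tree}, then combine Lemma \ref{trianglefree} with the ear decomposition of Lemma \ref{internalofPt} to invoke Theorem \ref{2bound}. The only difference is that you make explicit the step the paper leaves implicit---that ear endpoints have degree at least three in $G$, so the guaranteed degree-two vertex must be internal, ruling out single-edge ears---which is a worthwhile clarification but not a different argument.
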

\begin{proof}
By Lemma \ref{internalofPt} and \ref{trianglefree}, if $G$ is not a cycle, then there is an ear decomposition satisfying the conditions in Theorem \ref{2bound}. Thus, $mvd(G)\leq \left\lfloor {\frac{n}{2}}\right\rfloor$. If $G$ is a cycle, then by Lemma \ref{tree} $mvd(G)=\left\lfloor {\frac{n}{2}}\right\rfloor$. Furthermore, the bound is sharp.
\end{proof}

Further, we obtain a upper bound of $1$-connected graph related to the number of blocks.

\begin{thm}\label{blockbound}
For a connected graph $G$ of order $n$ with $r$ blocks, $t$ of which are trivial blocks, if all blocks are minimally $2$-connected triangle-free graphs, then $mvd(G)\leq \left\lfloor {\frac{n+2t-r+1}{2}}\right\rfloor$.
\end{thm}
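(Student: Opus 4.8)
The plan is to combine the additive block formula of Corollary~\ref{blockrelation} with the per-block bound of Theorem~\ref{mini2bound}, and then to control the total number of vertices across all blocks by a counting identity coming from the block--cut structure of $G$. Write $B_1,\dots,B_t$ for the trivial blocks and $B_{t+1},\dots,B_r$ for the nontrivial ones, and set $n_i=|B_i|$. By Corollary~\ref{blockrelation},
\[
mvd(G)=\Big(\sum_{i=1}^{r}mvd(B_i)\Big)-r+1,
\]
so it suffices to bound $\sum_i mvd(B_i)$ and to determine $\sum_i n_i$.

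First I would treat the two kinds of blocks separately. A trivial block is $K_2$, so $mvd(B_i)=mvd(K_2)=2$ for $i\le t$. For a nontrivial block $B_i$ $(i>t)$, note that it is a $2$-connected subgraph; the only $2$-connected graph of order at most $3$ is $K_3$, which contains a triangle, so the triangle-free hypothesis forces $n_i\ge 4$. Hence Theorem~\ref{mini2bound} applies and gives $mvd(B_i)\le\left\lfloor n_i/2\right\rfloor$. (By Lemma~\ref{trianglefree} the triangle-free assumption on a nontrivial block is automatic once $n_i\ge 4$; we only use that the hypothesis rules out $K_3$.)

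The crux is the order identity $\sum_{i=1}^{r} n_i=n+r-1$. I would prove it by counting incidences of the form $(\text{vertex},\text{block})$: a non-cut-vertex lies in exactly one block, while a cut-vertex $v$ lies in $b(v)$ blocks. Passing to the block--cut tree of $G$, whose nodes are the $r$ blocks and the $c$ cut-vertices and which therefore has $r+c-1$ edges, the degree of each cut-vertex node is exactly $b(v)$ and the edges are precisely the block--cut-vertex incidences, so $\sum_{v}b(v)=r+c-1$. Consequently the number of incidences is $(n-c)+\sum_{v}b(v)=(n-c)+(r+c-1)=n+r-1$, and this number equals $\sum_i n_i$. (Equivalently, an easy induction on $r$ works: attaching one further block along a single shared cut-vertex adds $n_i-1$ vertices and one block, preserving $\sum n_i=n+r-1$.) Separating off the trivial blocks, for which $n_i=2$, gives $\sum_{i>t} n_i=(n+r-1)-2t$.

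Finally I would assemble the pieces. Using $mvd(B_i)=2$ on trivial blocks and $mvd(B_i)\le\left\lfloor n_i/2\right\rfloor\le n_i/2$ on the rest,
\[
mvd(G)\le 2t+\sum_{i>t}\frac{n_i}{2}-r+1
=2t+\frac{(n+r-1)-2t}{2}-r+1
=\frac{n+2t-r+1}{2}.
\]
Since $mvd(G)$ is an integer, this rounds down to $mvd(G)\le\left\lfloor (n+2t-r+1)/2\right\rfloor$, as claimed. The main obstacle is the bookkeeping in the order identity $\sum n_i=n+r-1$: every per-block estimate is immediate from the earlier results, and the only genuine content is accounting correctly for the cut-vertices shared among several blocks.
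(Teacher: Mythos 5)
Your proof is correct and follows essentially the same route as the paper: Corollary \ref{blockrelation} plus the per-block bound of Theorem \ref{mini2bound}, combined with the order identity $\sum_{i=1}^{r}|B_i|=n+r-1$. The only cosmetic differences are that you establish that identity by block--cut-tree incidence counting (the paper uses the induction you mention as an alternative) and that you do the final arithmetic over the reals and take a single floor at the end, rather than manipulating floors term by term as the paper does; you also make explicit the minor point, left implicit in the paper, that triangle-freeness forces every nontrivial block to have order at least $4$ so that Theorem \ref{mini2bound} applies.
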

\begin{proof}
We first claim a connected graph $G$ with blocks $B_{1}, B_{2}, \ldots , B_{r}$ has $\big(\sum\limits_{i=1}^{r} |B_{i}|\big)-r+1$ vertices. The proof proceeds by induction on $r$. Basis step: $r=1$. A graph that is a single block $B_{1}$ has $|B_{1}|$ vertices.

Induction step: $r>1$. When $G$ is not $2$-connected, there is a block $B$ that contains only one of the cut-vertices; let this vertex be $v$, and index the blocks so that $B_r=B$. Let $G'=G-\big(V(B)-\{v\}\big)$. The graph $G'$ is connected and has blocks $B_{1}, B_{2},\ldots , B_{r-1}$. By the induction hypothesis, $|G'|=\big(\sum\limits_{i=1}^{r-1} |B_{i}|\big)-(r-1)+1$. Since we deleted $|B_{r}|-1$ vertices from $G$ to obtain $G'$, the number of vertices in $G$ is as desired.

W.l.o.g, let the trivial blocks be $B_{1}, \ldots , B_{t}$, and let the nontrivial blocks be $B_{t+1}, \ldots , B_{r}$. By Corollary \ref{blockrelation} and Theorem \ref{mini2bound},
\begin{equation}
\begin{aligned}
mvd(G)&=\Big(\sum\limits_{i=1}^{r} mvd(B_{i})\Big)-r+1\leq 2t+\left\lfloor {\frac{|B_{t+1}|}{2}}\right\rfloor+\ldots +\left\lfloor {\frac{|B_{r}|}{2}}\right\rfloor-r+1\\
&\leq \left\lfloor {\frac{2t+|B_{t+1}|+\ldots +|B_{r}|}{2}}\right\rfloor+t-r+1\\
&=\left\lfloor {\frac{n+r-1}{2}}\right\rfloor+t-r+1\\
&=\left\lfloor {\frac{n+2t-r+1}{2}}\right\rfloor.
\end{aligned}
\end{equation}

The bound is sharp for cactuses without odd cycles, where a $cactus$ is a connected graph in which every block is $K_{2}$ or a cycle.
\end{proof}

\subsection{ A contrary problem }
\quad \quad Given a graph $G$, it is meaningful to calculate $mvd(G)$. Conversely, given $mvd(G)$, the complete characterization of $G$ is also a meaningful study.
Thus in this section, in the sense of isomorphism, if blocks in $G$ are all minimal $2$-connected triangle-free graphs, we characterize $G$ when $mvd(G)$ is large. That is, Theorem \ref{large},

\begin{thm}\label{large}
For a connected graph $G$, if blocks in $G$ are all minimally $2$-connected triangle-free graphs, then
\begin{equation}
    mvd(G)=
   \begin{cases}
   n&\mbox{ $\Leftrightarrow G$ is a tree,}\\
   n-1&\mbox{ $\varnothing$,}\\
   n-2&\mbox{ $\Leftrightarrow G$ is a unicycle graph with the cycle being $C_4$,}\\
   n-3&\mbox{ $\Leftrightarrow G\in \mathscr{A}$,}\\
   n-4&\mbox{ $\Leftrightarrow G\in \mathscr{B}$,}\\
   n-5&\mbox{ $\Leftrightarrow G\in \mathscr{C}$.}
   \end{cases}
  \end{equation}
\end{thm}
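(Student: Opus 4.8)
The plan is to reduce everything to a single additive invariant, the \emph{defect} $d(B):=|B|-mvd(B)$ of a block $B$, and to read off the global structure from how the total defect is distributed among the nontrivial blocks. First I would combine Corollary~\ref{blockrelation}, namely $mvd(G)=\big(\sum_{i=1}^{r}mvd(B_i)\big)-r+1$, with the vertex-count identity $n=\big(\sum_{i=1}^{r}|B_i|\big)-r+1$ established in the proof of Theorem~\ref{blockbound}. Subtracting these cancels the $-r+1$ terms and yields the key identity
\begin{equation}
n-mvd(G)=\sum_{i=1}^{r}\big(|B_i|-mvd(B_i)\big)=\sum_{i=1}^{r}d(B_i).
\end{equation}
Thus all six cases are governed entirely by the multiset of block defects.

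Next I would classify the possible block defects. A trivial block $K_2$ has $mvd(K_2)=2$, so $d(K_2)=0$; hence attaching trees (strings of $K_2$ blocks) never changes $n-mvd(G)$. For a nontrivial block $B$, which is necessarily minimally $2$-connected, triangle-free, of order $|B|\ge 4$, Theorem~\ref{mini2bound} gives $mvd(B)\le\lfloor |B|/2\rfloor$, whence
\[
d(B)=|B|-mvd(B)\ge\big\lceil |B|/2\big\rceil\ge 2 .
\]
In particular $d(B)\le 5$ forces $|B|\le 10$, so only finitely many graphs can occur. Within each relevant order I would enumerate all minimally $2$-connected triangle-free graphs and compute $mvd$ exactly, supplying matching MVD-colorings for the lower bounds (in the style of Lemma~\ref{tree}). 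For example, $d(B)=2$ forces $|B|=4$, so $B=C_4$; for $|B|=5$ the only candidates are $C_5$ and $K_{2,3}$, both with $mvd=2$ and hence defect $3$; among order-$6$ graphs only $C_6$ attains $mvd=3$ (defect $3$), whereas $K_{2,4}$ and the theta graph on paths of lengths $2,2,3$ have $mvd=2$ (defect $4$); and so on up through $|B|=10$.

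Finally I would assemble the global description by partitioning the total defect $k=n-mvd(G)$ into parts each $\ge 2$: the empty partition for $k=0$ (all blocks $K_2$, i.e.\ $G$ a tree); \emph{no} admissible partition for $k=1$ (the $\varnothing$ case, since no block has defect $1$); $\{2\}$ for $k=2$ (a single $C_4$ block, i.e.\ a unicyclic graph whose cycle is $C_4$); $\{3\}$ for $k=3$; $\{4\}$ or $\{2,2\}$ for $k=4$; and $\{5\}$ or $\{3,2\}$ for $k=5$. Matching each part against the defect classification of the previous step, and allowing arbitrary trees (defect-$0$ blocks) to be attached at cut-vertices, produces precisely the families $\mathscr{A}$, $\mathscr{B}$, $\mathscr{C}$ for $k=3,4,5$.

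The hard part will be the exact evaluation of $mvd$ for the non-cycle blocks of orders $5$ through $10$ together with their complete enumeration: Theorem~\ref{mini2bound} supplies only the upper bound $\lfloor |B|/2\rfloor$, so for each candidate one must separately exhibit an explicit coloring realizing the claimed value and rule out any larger one, typically by locating a nonadjacent pair all of whose monochromatic vertex cuts are forced to contain vertices that would have to share a color (the mechanism underlying the $K_{2,3}$ and theta-graph computations above). Making this case analysis both correct and exhaustive—and thereby fixing the precise membership of $\mathscr{A}$, $\mathscr{B}$, $\mathscr{C}$—is the crux of the argument.
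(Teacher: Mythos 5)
Your proposal is correct, and it rests on exactly the same pillars as the paper's proof: Corollary \ref{blockrelation}, the vertex-count identity $n=\big(\sum_{i=1}^{r}|B_i|\big)-r+1$, the bound $mvd(B)\leq\left\lfloor|B|/2\right\rfloor$ of Theorem \ref{mini2bound}, and the exhaustive evaluation of $mvd$ for all minimally $2$-connected graphs of order at most $10$ recorded in Corollary \ref{6-10} (Hobbs' catalog \cite{ref7}, with colorings in Appendix A). Where you genuinely differ is the bookkeeping, and your version is cleaner. The paper never subtracts the two identities; instead it invokes Theorem \ref{blockbound}, $n-5\leq mvd(G)\leq\left\lfloor\frac{n+2t-r+1}{2}\right\rfloor$, splits into cases according to the parity of $n-r$, runs through the possible values $r=n-1,n-2,\ldots,n-11$, bounds the number $t$ of trivial blocks in each case, and only then recovers the orders of the nontrivial blocks from the vertex count. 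Your defect identity $n-mvd(G)=\sum_{i}d(B_i)$ cancels the $-r+1$ terms once and for all, makes the $mvd(G)=n-1$ case transparent (no block has defect $1$, since trivial blocks have defect $0$ and nontrivial ones have defect at least $2$) rather than having it emerge implicitly from exhaustion of all subcases, and replaces the parity analysis by the short list of partitions of $k\leq 5$ into parts of size at least $2$. Both routes then converge on the same crux you identify: the complete enumeration and exact computation of $mvd$ for the finitely many candidate blocks of order at most $10$, which is where all the real work (and the precise membership of $\mathscr{A}$, $\mathscr{B}$, $\mathscr{C}$) lives; your sample values ($C_5$, $K_{2,3}$, $C_6$ with defect $3$; $K_{2,4}$, $P(2,1,1)$ with defect $4$) agree with the paper's table.
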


Before the proof, we show some preparatory results as follows.

First, we use the notations in \cite{ref7}. Let $p_{1}, \ldots, p_{k}$ be $k$ disjoint paths such that $p_{i}$ contains $m_{i}$ vertices. $f(p_{i})$ denotes the first vertex of $p_{i}$ and $l(p_{i})$ denotes the last vertex of $p_{i}$. For vertices $u$ and $v$ that are not in $\cup_{i\in[k]}$ $V(p_{i})$, let $P(m_{1}, m_{2}, \ldots, m_{k})$ be the graph with $V(P)=\cup_{i\in[k]} V(p_{i}) \cup \{u, v\}$ and $E(P)=\cup_{i\in[k]} \big[E(p_{i}) \cup \{f(p_{i})u,l(p_{i})v\}\big]$ such that $p_{1}, \ldots, p_{k}$ are subgraphs of $P$. In the sequence $m_{1},\ldots , m_{k}$, if $m_{i+1}=m_{i+2}=\cdots=m_{i+j}$ for some integer $i$ and $j$, we can write $m_{1},\ldots , m_{k}$ in the form $m_{1},\ldots , m_{i}, j*m_{i+1}, m_{i+j+1}, \ldots ,m_{k}$.

Next, if $\tau$ is an $mvd$-coloring of $G$, $G_{i}$ is a vertex induced subgraph of $G$, and $\tau_{i}$ is the coloring of $\tau$ restricted to $G_{i}$, then it follows from Lemma \ref{induced} that $\tau_{i}$ is an $MVD$-coloring of $G_{i}$. Thus, theoretically, we can infer $\tau$ by letting each $\tau_{i}$ be an $MVD$-coloring of $G_{i}$. Taking graph $P(1,1,1)$ as an example (see Fig. \ref{3}), it follows from Lemma \ref{induced} that if $\tau$ is the $mvd$-coloring of $P(1,1,1)$, then the coloring of $\tau$ restricted to cycle $C_{abcd}$ may be (1) or (3). For case (1), the coloring of $\tau$ restricted to cycle $C_{abce}$ and cycle $C_{afcd}$ must be (2). For case (3), the coloring of $\tau$ restricted to cycle $C_{abce}$ and cycle $C_{afcd}$ must be (4). It is easy to check that both (2) and (4) are $MVD$-coloring of $P(1,1,1)$ and (4) is an $mvd$-coloring of $P(1,1,1)$. For all minimally $2$-connected graphs $G$ of order 10 or less \cite{ref7}, we use the same method and after tedious computations, we obtain Corollary \ref{6-10}, that is, $mvd(G)$. An $mvd$-coloring of $G$ is given in Appendix A.
\begin{figure}
\centering
  \includegraphics[height=2cm]{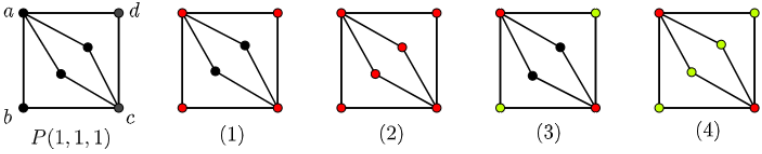}
  \caption{An example of Corollary \ref{6-10}}
  \label{3}
\end{figure}

\begin{cor}\label{6-10}
For all minimally $2$-connected graphs $G$ of order 10 or less, $mvd(G)$ is shown in the following table.
\end{cor}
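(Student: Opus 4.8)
The plan is to reduce the statement to a finite and fully explicit case analysis. By Lemma~\ref{trianglefree} every minimally $2$-connected graph of order $n\geq 4$ is triangle-free, and by the classification of all minimally $2$-connected graphs of order at most $10$ in~\cite{ref7} there is a finite list of such graphs, each recorded in the compact $P(m_1,\ldots,m_k)$ notation. First I would fix this list and compute $mvd(G)$ for each entry separately; the table is then merely the record of these computations. Theorem~\ref{mini2bound} already supplies the ceiling $mvd(G)\leq\lfloor n/2\rfloor$ for every non-cycle entry and the exact value $\lfloor n/2\rfloor$ for cycles via Lemma~\ref{tree}, so the work concentrates on pinning down the non-cycle graphs precisely.

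For a single graph $G$ the engine is localization to induced cycles. Any $mvd$-coloring $\tau$ restricts, by Lemma~\ref{induced}, to an $MVD$-coloring on every vertex-induced cycle $C$ of $G$, and the $MVD$-colorings of a cycle are tightly constrained: as in the proof of Lemma~\ref{tree}, if a color occurs on exactly one vertex $v$ of an induced cycle of length $\geq 4$, then any monochromatic cut separating the two neighbors of $v$ is forced to contain a second vertex of a different color, a contradiction; hence colors must essentially come in matching pairs. I would enumerate the induced cycles of $G$, record the admissible patterns on each from Lemma~\ref{tree}, and then \emph{propagate} these patterns across cycles that share vertices. This is precisely the argument sketched for $P(1,1,1)$ preceding the statement: fixing the pattern on one induced $4$-cycle forces the patterns on the overlapping $4$-cycles, and only globally consistent choices survive. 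The largest color count among the consistent choices gives the upper bound $mvd(G)\leq c$, while an explicit coloring realizing $c$ (collected in Appendix~A) supplies the matching lower bound $mvd(G)\geq c$; together they fix $mvd(G)=c$ for the claimed value.

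The difficulty is not any single graph but the bulk and bookkeeping of the whole analysis. Completeness depends entirely on the enumeration in~\cite{ref7}, so correctly transcribing that list is the first hurdle. The genuinely delicate step is the upper bound for graphs carrying many mutually overlapping induced cycles: ruling out one extra color requires checking every consistent combination of cycle patterns, and this is where errors are easiest to make and where proving optimality of the Appendix~A coloring --- rather than mere feasibility --- demands the real effort.
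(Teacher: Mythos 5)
Your proposal takes essentially the same route as the paper: the paper likewise reduces the corollary to a finite check over the catalog of minimally $2$-connected graphs of order at most $10$ from \cite{ref7}, uses Lemma~\ref{induced} to constrain any $mvd$-coloring on overlapping induced cycles and propagates the admissible patterns across shared vertices (exactly the $P(1,1,1)$ argument you invoke), and certifies each value with the explicit colorings recorded in Appendix~A. Your additional use of Lemma~\ref{tree} and Theorem~\ref{mini2bound} as a preliminary ceiling is a harmless shortcut, not a different method.
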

\begin{figure}[H]
\centering
  \includegraphics[height=6cm]{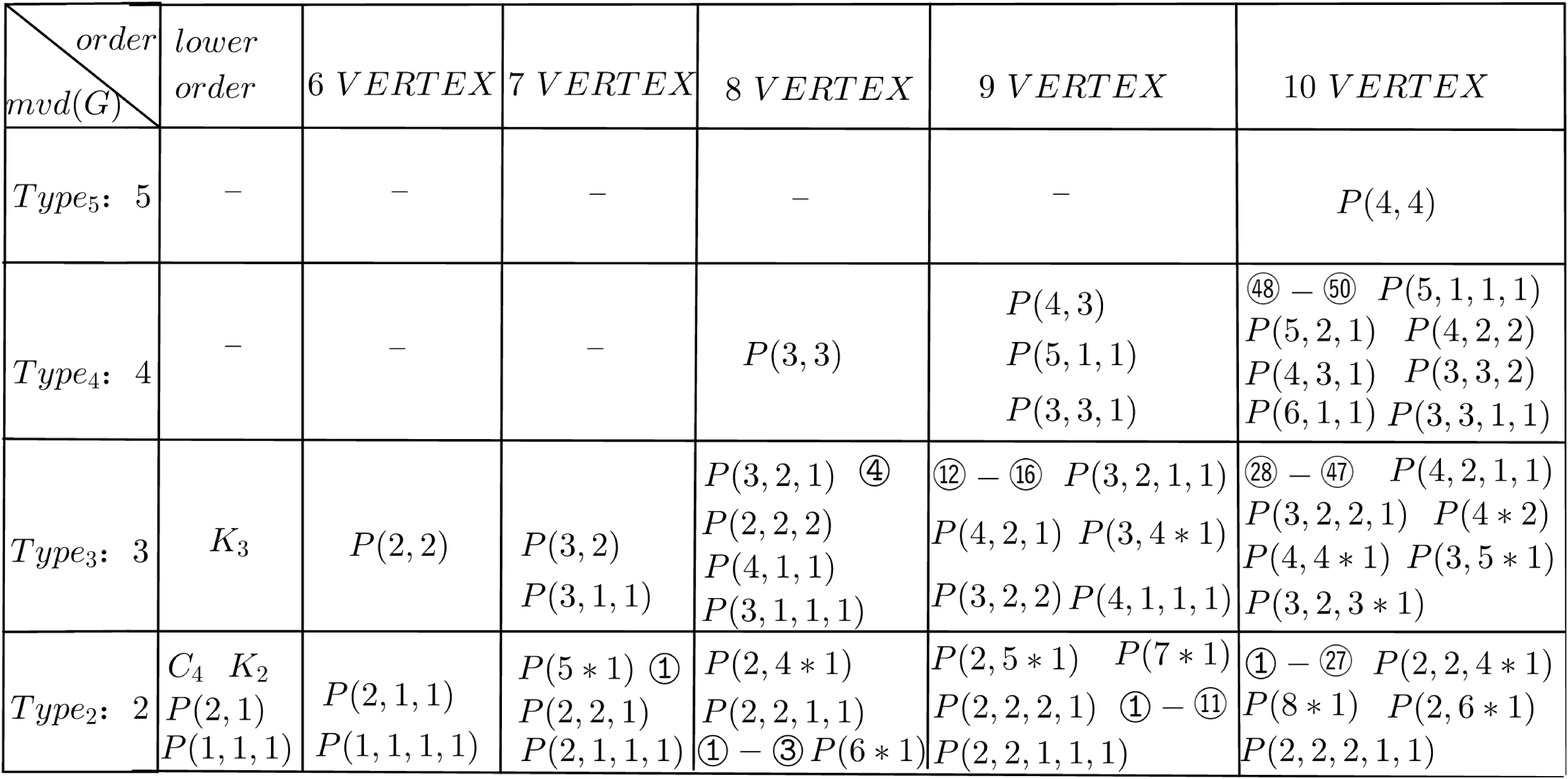}\\
  \label{table1}
\end{figure}

The $nontrivial$ $block$-$induced$ $subgraph$ of a connected graph $G$ is the subgraph induced by all nontrivial blocks of $G$. Let $\mathscr{A}$ (or $\mathscr{B}$, or $\mathscr{C}$) be a set of connected graphs, and the nontrivial block-induced subgraph of each connected graph in $\mathscr{A}$ (or $\mathscr{B}$, or $\mathscr{C}$) is exactly isomorphic to one of the graphs in Fig. \ref{4}$(a)$ \big(or $(b)$, or $(c)$\big). The blue vertex indicates that two blocks may or may not be adjacent to each other.

\begin{figure}
\centering
  \includegraphics[height=10.5cm]{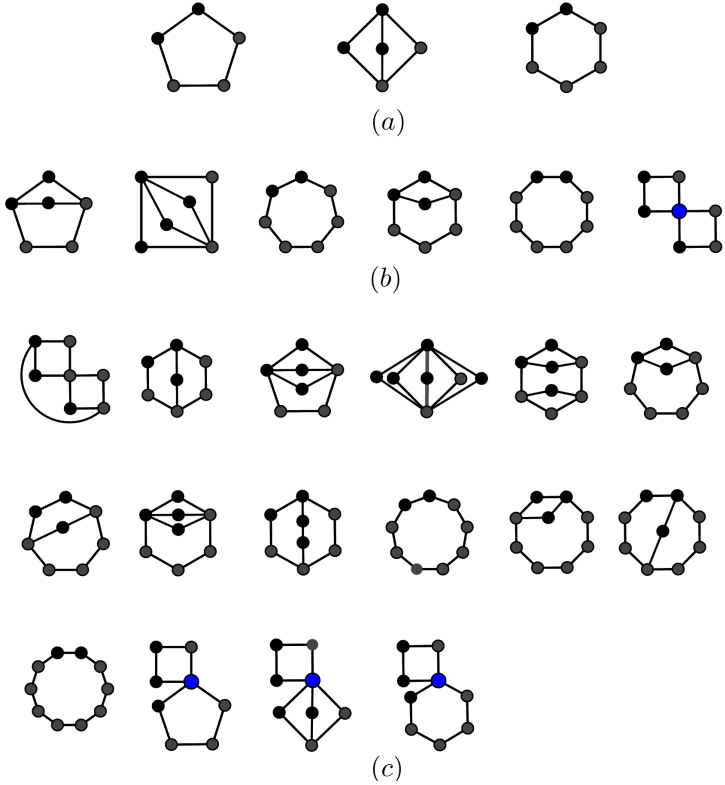}
  \caption{Results for Theorem \ref{large}}
  \label{4}
\end{figure}

\begin{figure}
\centering
  \includegraphics[height=10.5cm]{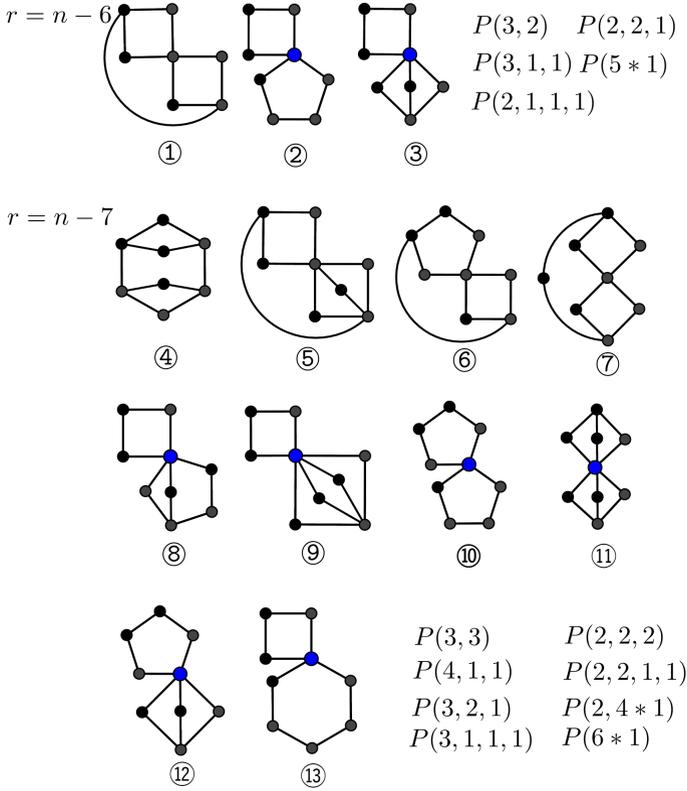}
  \caption{Proof process of Theorem \ref{large}}
  \label{5}
\end{figure}

\begin{proof}[Proof of Theorem \ref{large}]
By Theorem \ref{everyblock}, Corollary \ref{blockrelation} and \ref{6-10}, it is easy to verify sufficiency. Therefore, we only need to prove the necessity. If $mvd(G)=n$, we define a coloring $\tau: V(G)\rightarrow [n]$ such that each vertex is colored differently. Let \{$B_{1}, B_{2}, \ldots , B_{r}$\} be the block decomposition of $G$ and the coloring of $\tau$ restricted to $B_{i}$ be $\tau_{i}$. By Theorem \ref{everyblock}, $\tau_{i}$ is an $mvd$-coloring of $B_{i}$. Then $B_{i}$ is complete. Otherwise, since $B_{i}$ is $2$-connected, at least 2 vertices in $B_{i}$ have the same color, a contradiction. More precisely $G$ is a tree, since $G$ is triangle-free.

Now suppose $mvd(G)\geq n-5$, $G$ has $r$ blocks, and $t$ of them are trivial blocks. By Theorem \ref{blockbound}, we have $n-5\leq mvd(G)\leq \left\lfloor {\frac{n+2t-r+1}{2}}\right\rfloor$. There are two cases.

\textbf{Case 1}. $n-r$ is even.

We have $2t\geq n+r-10$. Since $t\leq r$, $r\geq n-10$. Thus $r$ may be $n-2, n-4, n-6, n-8, n-10$.

$r=n-2$. Since $2t\geq n+r-10$ and $r=t$ if and only if $G$ is a tree (i.e., $r=n-1$), $n-6\leq t\leq n-3$. Since $G$ is triangle-free, we get $\sum\limits_{i=1}^{r} |B_{i}|$ greater than or equal to $4+2(n-3)$, $8+2(n-4)$, $12+2(n-5)$, $16+2(n-6)$ when $t$ is equal to $n-3$, $n-4$, $n-5$, $n-6$, respectively. All the four cases above contradict $n=\big(\sum\limits_{i=1}^{r} |B_{i}|\big)-r+1$. Thus there is no graph that satisfies the condition when $r=n-2$. Similarly, in all other cases of this proof, we can first use this method to make a basic determination of the number of nontrivial blocks in $G$.

$r=n-4$. Similarly, $n-7\leq t\leq n-5$. In fact, there can be only one nontrivial block in $G$. Since $n=\big(\sum\limits_{i=1}^{r} |B_{i}|\big)-r+1$, the order of this block is $5$. The minimal $2$-connected graph of order $5$ is $P(2,1)$ or $P(1,1,1)$. According to Theorem \ref{everyblock}, Corollary \ref{blockrelation} and \ref{6-10}, we have $mvd(G)=n-3$ in both cases.

$r=n-6$. Similarly, $n-8\leq t\leq n-7$. In fact, there are at most two nontrivial blocks in $G$. Since $n=\big(\sum\limits_{i=1}^{r} |B_{i}|\big)-r+1$, if $t=n-7$, the order of this nontrivial block is $7$; if $t=n-8$, there are two nontrivial blocks $B_{i}$ and $B_{j}$, where $|B_{i}|+|B_{j}|=9$. Since $G$ is triangle-free, the nontrivial block-induced subgraph of $G$ is one of the graphs depicted in Fig. \ref{5}. According to Theorem \ref{everyblock}, Corollary \ref{blockrelation} and \ref{6-10}, $mvd(G)=n-4$ when the nontrivial block-induced subgraph of $G$ is $P(3,2)$ or $P(3,1,1)$, and $mvd(G)=n-5$ for the rest cases.

$r=n-8$. Similarly, $t=n-9$. $G$ has exactly one nontrivial block of order $9$ since $n=\big(\sum\limits_{i=1}^{r} |B_{i}|\big)-r+1$. The nontrivial block is one of the graphs depicted in Appendix A. $9$ VERTEX. According to Theorem \ref{everyblock}, Corollary \ref{blockrelation} and \ref{6-10}, $mvd(G)=n-5$ when the nontrivial block-induced subgraph of $G$ is one of $\big \{P(4,3),\ P(5,1,1),\ P(3,3,1)\big \}$, and $mvd(G)<n-5$ for the rest cases.

$r=n-10$. Similarly, $n-10\leq t\leq n-11$, a contradiction.

\textbf{Case 2}. $n-r$ is odd.

We have $2t\geq n+r-11$. Since $t\leq r$, $r\geq n-11$. Thus $r$ may be $n-1, n-3, n-5, n-7, n-9, n-11$. The discussion and calculation process of Case $2$ is similar to that of Case $1$.

$r=n-1$. $G$ is a tree and $mvd(G)=n$.

$r=n-3$. Since $2t\geq n+r-11$ and $r=t$ if and only if $G$ is a tree (i.e., $r=n-1$), $n-7\leq t\leq n-4$. Since $G$ is triangle-free, we get $\sum\limits_{i=1}^{r} |B_{i}|$ greater than or equal to $8+2(n-5)$, $12+2(n-6)$, $16+2(n-7)$ when $t$ is equal to $n-5$, $n-6$, $n-7$, respectively. All the three cases above contradict $n=\big(\sum\limits_{i=1}^{r} |B_{i}|\big)-r+1$. Thus $G$ has exactly one nontrivial block. The order of this block is $4$ since $n=\big(\sum\limits_{i=1}^{r} |B_{i}|\big)-r+1$. Since the minimal $2$-connected graph of order $4$ is $C_{4}$, $G$ is a unicycle graph with cycle $C_{4}$. According to Corollary \ref{blockrelation}, $mvd(G)=n-2$.

$r=n-5$. Similarly, $n-8\leq t\leq n-6$. In fact, there are at most two nontrivial blocks in $G$. Since $n=\big(\sum\limits_{i=1}^{r} |B_{i}|\big)-r+1$ and $G$ is triangle-free, if $t=n-6$, then the order of this nontrivial block is $6$, that is, it can be one of $\{P(2,2),\ P(2,1,1),\ P(1,1,1,1)\}$; if $t=n-7$, then there are two nontrivial blocks $B_{i}$ and $B_{j}$, where $|B_{i}|+|B_{j}|=8$, that is, both nontrivial blocks of $G$ are $C_{4}$ and these two blocks may or may not be adjacent to each other. According to Theorem \ref{everyblock}, Corollary \ref{blockrelation} and \ref{6-10}, $mvd(G)=n-3$ when the nontrivial block-induced subgraph of $G$ is $P(2,2)$, and $mvd(G)=n-4$ for the rest cases.

$r=n-7$. Similarly, $n-9\leq t\leq n-8$. In fact, there are at most two nontrivial blocks in $G$. Since $n=\big(\sum\limits_{i=1}^{r} |B_{i}|\big)-r+1$, if $t=n-8$, the order of this nontrivial block is $8$; if $t=n-9$, there are two nontrivial blocks $B_{i}$ and $B_{j}$, where $|B_{i}|+|B_{j}|=10$. Since $G$ is triangle-free, the nontrivial block-induced subgraph of $G$ is one of the graphs depicted in Fig. \ref{5}. According to Theorem \ref{everyblock}, Corollary \ref{blockrelation} and \ref{6-10}, $mvd(G)=n-4$ when the nontrivial block-induced subgraph of $G$ is $P(3,3)$, $mvd(G)=n-5$ when the nontrivial block-induced subgraph of $G$ is one of $\big \{\Large{\textcircled{\small {4}}},\ \Large{\textcircled{\small {13}}},\ P(4,1,1),\ P(3,2,1),\ P(3,1,1,1),\ P(2,2,2)\big \}$, and $mvd(G)=n-6$ for the rest cases.

$r=n-9$. Similarly, $t=n-10$. $G$ has exactly one nontrivial block of order $10$ since $n=\big(\sum\limits_{i=1}^{r} |B_{i}|\big)-r+1$. The nontrivial block is one of the graphs depicted in Appendix A. $10$ VERTEX. According to Theorem \ref{everyblock}, Corollary \ref{blockrelation} and \ref{6-10}, $mvd(G)=n-5$ when the nontrivial block-induced subgraph of $G$ is $P(4,4)$, and $mvd(G)<n-5$ for the rest cases.

$r=n-11$. Similarly, $n-11\leq t\leq n-12$, a contradiction.
\end{proof}

\begin{rem}\label{remark 4}
If all (nontrivial) blocks of a connected graph $G$ are triangles, then $mvd(G)=n$.
\end{rem}

\subsection{ An example }
\quad \quad We conclude this paper with an example, which is also an application of Corollary \ref{6-10} and Algorithm $3$.

\noindent$\displaystyle \textbf{Example 3.1.}$
If $G$ is any graph whose blocks are all minimally $2$-connected graphs of small order, combining Algorithm $2$ and Algorithm $4$, we can compute $mvd(G)$ and give an $mvd$-coloring of $G$ in polynomial time.

\makeatletter
\newenvironment{breakable3algorithm}
  {
   \begin{center}
     \refstepcounter{algorithm}
     \hrule height.8pt depth0pt \kern2pt
     \renewcommand{\caption}[2][\relax]{
       {\raggedright\textbf{\ALG@name~\thealgorithm} ##2\par}
       \ifx\relax##1\relax
         \addcontentsline{loa}{algorithm}{\protect\numberline{\thealgorithm}##2}
       \else
         \addcontentsline{loa}{algorithm}{\protect\numberline{\thealgorithm}##1}
       \fi
       \kern2pt\hrule\kern2pt
     }
  }{
     \kern2pt\hrule\relax
   \end{center}
  }
\makeatother

\begin{breakable3algorithm}
\caption{An example}
\hspace*{0.02in} {\bf Input:}
$CutVerticesSet$ and $BlocksSet$ from Algorithm $2$, Appendix $A$\\
\hspace*{0.02in} {\bf Output:}
$mvd(G)$, $mvd$-coloring of $G$
\begin{algorithmic}[1]
\State $n_5 \gets 0,\ n_4 \gets 0,\ n_3 \gets 0,\ n_2 \gets 0$
\State $mvd(G) \gets 0$
\For{\textbf{every} block $Block \in BlocksSet$}
    \State find the block $thBlock$ isomorphic to $Block$ in Appendix $A$
    \For{$Type_i$ in Corollary \ref{6-10}}
        \If{$thBlock \in Type_i$}
            \State $n_i++$
        \EndIf
    \EndFor
    \State color $Block$ with new colors as $thBlock$ in Appendix $A$
\EndFor
$mvd(G) \gets 4n_5+3n_4+2n_3+n_2+1$
\For{\textbf{every} vertex $u \in CutVerticesSet$}
    \State color($u$) $\gets$ null
\EndFor
\For{\textbf{every} block $Block \in BlocksSet$}
    \For{\textbf{every} vertex $v \in Block$}
        \If{$v \in CutVerticesSet$}
           \State $CutVertex \gets $cut-vertex in $CutVerticesSet$ corresponding to $v$
           \If{color($CutVertex$)=null}
               \State color($CutVertex$) $\gets$ color($v$)
           \Else
               \State color($v$) $\gets$ color($CutVertex$)
               \State update the colors of all vertices in $Block$ according to $thBlock$ in Appendix $A$
           \EndIf
        \EndIf
    \EndFor
\EndFor
\end{algorithmic}
\end{breakable3algorithm}

\begin{thm}\label{np}
we can compute $mvd(G)=4n_5+3n_4+2n_3+n_2+1$ and give an $mvd$-coloring of $G$ in polynomial time.
\end{thm}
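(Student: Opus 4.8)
The plan is to obtain the formula as an immediate consequence of the block-additivity identity (Corollary \ref{blockrelation}) combined with the table of small-block values (Corollary \ref{6-10}), and then to check that every phase of Algorithm $4$ runs in polynomial time, the only delicate point being the isomorphism test, which is tamed by the bounded order of the blocks.

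First I would pin down the possible values of $mvd(B_i)$. Each block $B_i$ is either the trivial block $K_2$, with $mvd(K_2)=2$, or a minimally $2$-connected graph of order at most $10$. In the latter case $B_i$ is either a triangle, with $mvd(B_i)=3$ by Remark \ref{remark 2}, or, by Lemma \ref{trianglefree} and Theorem \ref{mini2bound}, a triangle-free block with $mvd(B_i)\le\lfloor|B_i|/2\rfloor\le 5$. In every case $mvd(B_i)\in\{2,3,4,5\}$, and the exact value is read off from Corollary \ref{6-10}. Writing $n_k$ for the number of blocks with $mvd(B_i)=k$, we have $r=n_2+n_3+n_4+n_5$ and $\sum_{i=1}^{r}mvd(B_i)=2n_2+3n_3+4n_4+5n_5$, so Corollary \ref{blockrelation} yields
\begin{align*}
mvd(G)&=\Big(\sum_{i=1}^{r}mvd(B_i)\Big)-r+1\\
&=(2n_2+3n_3+4n_4+5n_5)-(n_2+n_3+n_4+n_5)+1\\
&=n_2+2n_3+3n_4+4n_5+1,
\end{align*}
which is exactly $4n_5+3n_4+2n_3+n_2+1$.

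Next I would bound the running time of Algorithm $4$. The block decomposition of Algorithm $2$ is a single depth-first search and, with the $low(\cdot)$ bookkeeping justified by Theorem \ref{block1} and Corollary \ref{gedian1}, runs in $O(|V(G)|+|E(G)|)$ time. For each of the $r\le|V(G)|$ blocks we must locate an isomorphic representative in the fixed finite type set of Appendix $A$; since every block has at most $10$ vertices, each such test compares against a constant list of constant-size graphs and costs $O(1)$, giving $O(|V(G)|)$ in total. Incrementing the counters $n_k$ and evaluating $4n_5+3n_4+2n_3+n_2+1$ is then $O(r)$, so $mvd(G)$ is produced in polynomial time.

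Finally I would verify the coloring phase. Each block is first colored with a fresh palette according to the stored $mvd$-coloring of its representative; the cut-vertex colors are then reconciled block by block, freely permuting the colors of a block (legitimate, since a representative coloring is fixed only up to the names of its color classes) so that each cut-vertex keeps one color across all blocks containing it. By the equivalence in Theorem \ref{everyblock}, the restriction to each block remains an $mvd$-coloring and the color sets of any two blocks meet exactly in the color of their shared cut-vertex, or are disjoint; hence the resulting coloring is an $mvd$-coloring of $G$ using precisely $\big(\sum_{i}mvd(B_i)\big)-r+1$ colors. As there are at most $|V(G)|$ cut-vertices and each reconciliation touches a single block of bounded order, this phase is also polynomial. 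The one place a super-polynomial cost could arise is the isomorphism step, and the crux of the argument is that the small-order hypothesis reduces it to a constant-time lookup against the finite type set.
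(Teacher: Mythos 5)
Your proposal is correct and follows essentially the same route as the paper: the formula $mvd(G)=4n_5+3n_4+2n_3+n_2+1$ is obtained from Theorem \ref{everyblock}, Corollary \ref{blockrelation} and Corollary \ref{6-10}, and polynomial time follows from the linear-time block decomposition together with the observation that isomorphism tests on blocks of bounded order take only constant time per block. Your write-up is in fact more explicit than the paper's (spelling out the algebra behind the formula and the admissible values of $mvd(B_i)$), but the underlying argument is identical.
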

\begin{proof}
By Theorem \ref{everyblock}, Corollary \ref{blockrelation} and \ref{6-10}, $mvd(G)=4n_5+3n_4+2n_3+n_2+1$ and the coloring given by Algorithm $4$ is an $mvd$-coloring of $G$. Note the time complexity of Algorithm $2$ is $O(e)$, where $e=|E(G)|$. This is because that each edge is still scanned exactly once in each direction. The number of operations per edge is bounded by a constant, except when a block is produced. Each vertex is pushed into $T$ once, and popped once. Thus, the total time to produce the blocks is $O(n)$, where $n=|V(G)|$. When the order of $Block$ is small, the number of operations on line $4$ is bounded by a constant. Therefore, the complexity of Algorithm $4$ is still $O(n)$ even if we need to determine whether two graphs are isomorphic in line $4$, and we know that it is a $NP$-problem to determine whether two graphs are isomorphic.
\end{proof}

\section*{Acknowledgement}

\quad \quad This work is supported by the National Natural Science Foundation of China (NSFC11921001), the Natural Key Research and Development Program of China (2018YFA0704701), the National Natural Science Foundation of China (NSFC11801410) and the National Natural Science Foundation of China (NSFC11971346). The authors are grateful to professor C. Zong for his supervision and discussion.

\newpage
\begin{appendix}
\section{Minimal blocks With 10 and Fewer Vertices}

\begin{figure}[H]
\centering
  \includegraphics[width=11.5cm]{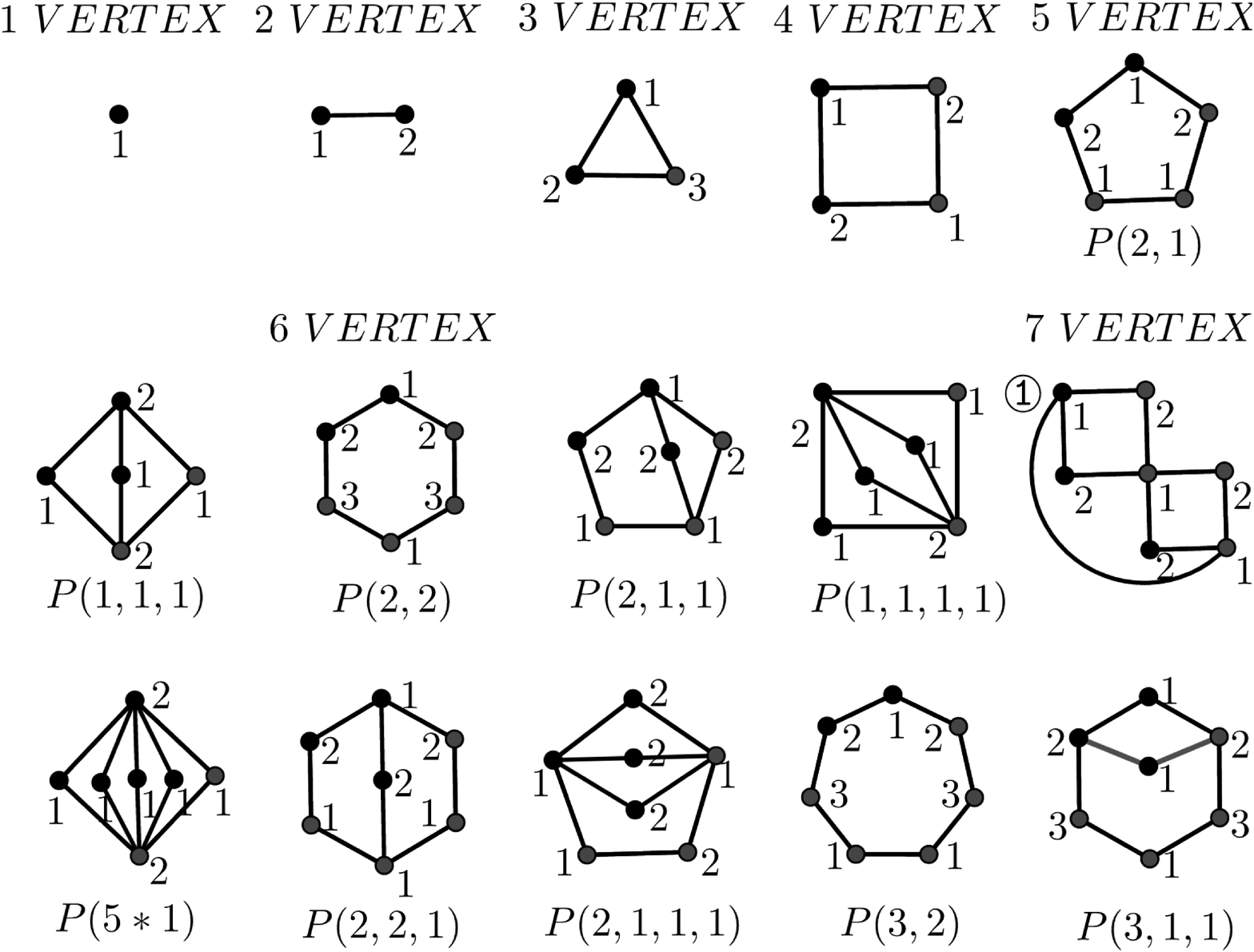}
  \includegraphics[width=12cm]{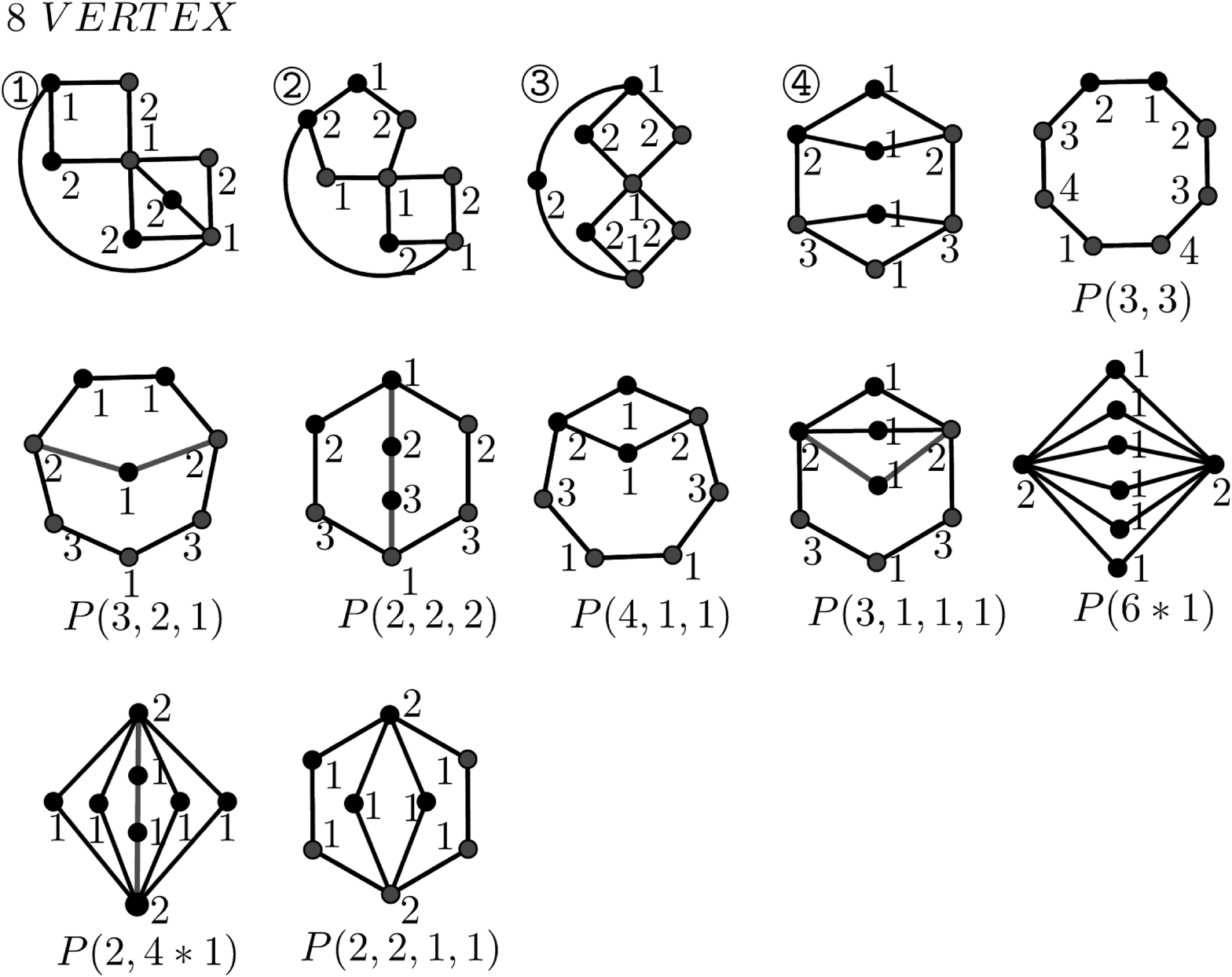}
\end{figure}
\begin{figure}[H]
\centering
  \includegraphics[height=8cm]{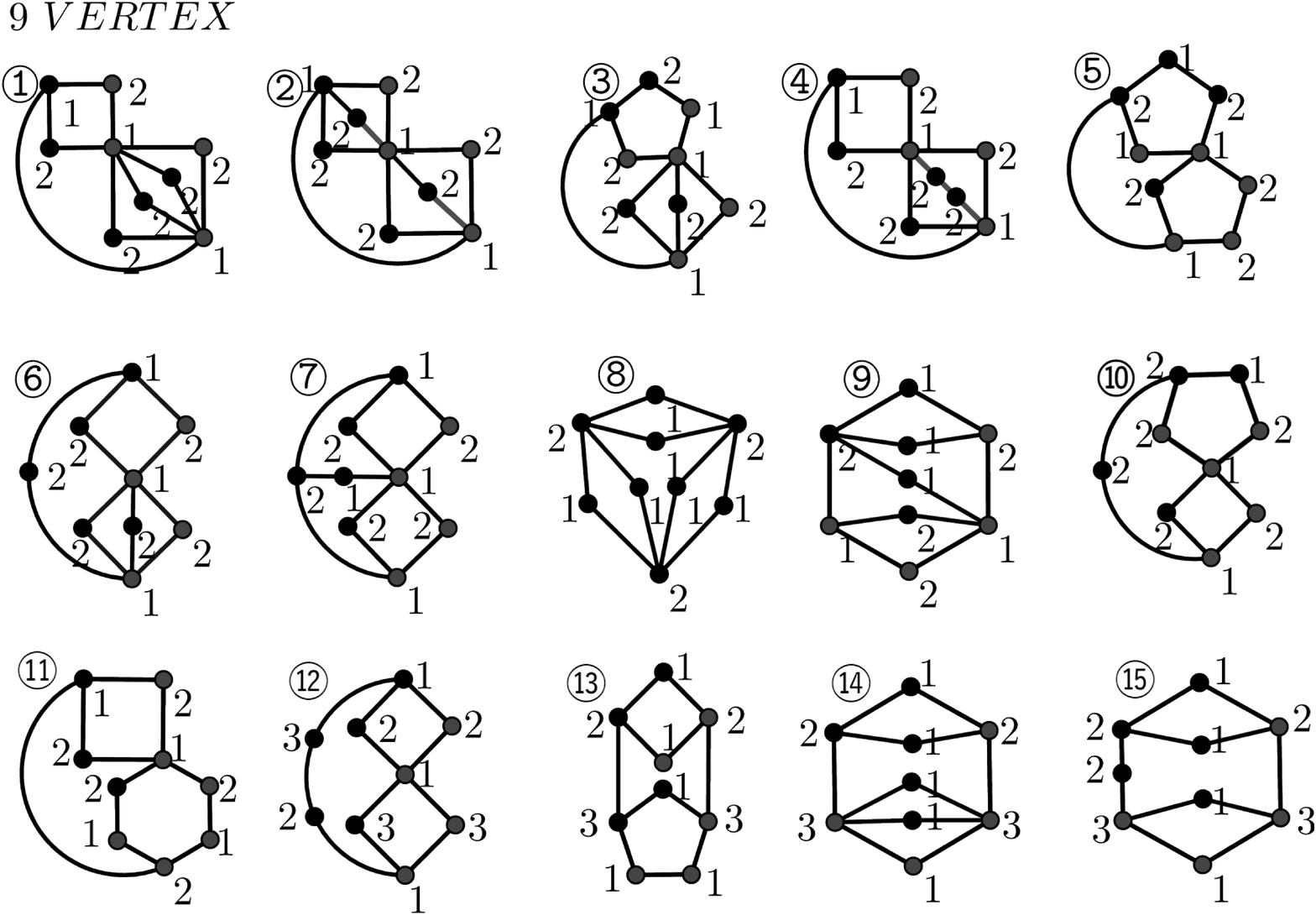}
  \includegraphics[height=10.5cm]{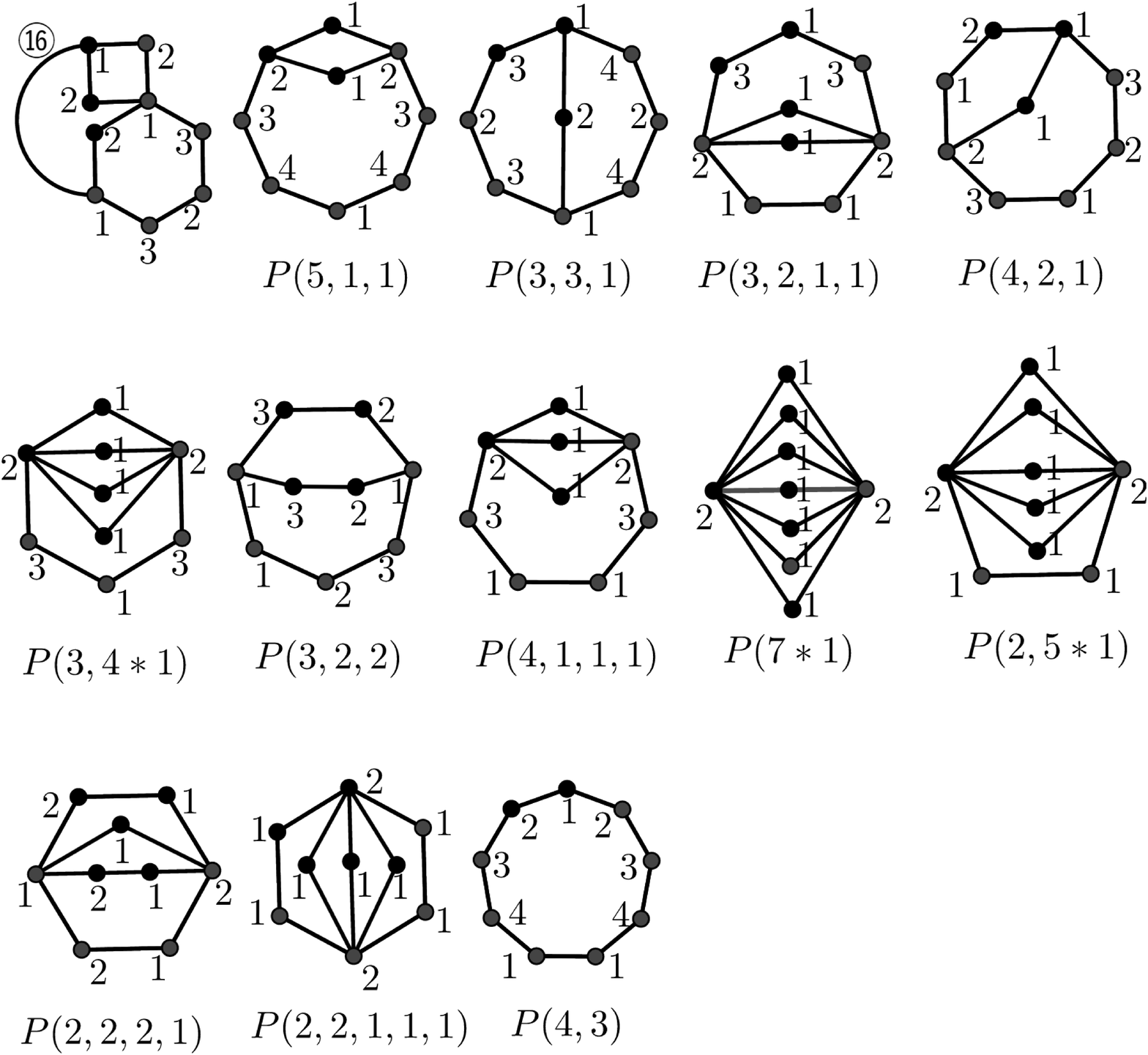}
\end{figure}
\begin{figure}[H]
\centering
  \includegraphics[height=10cm]{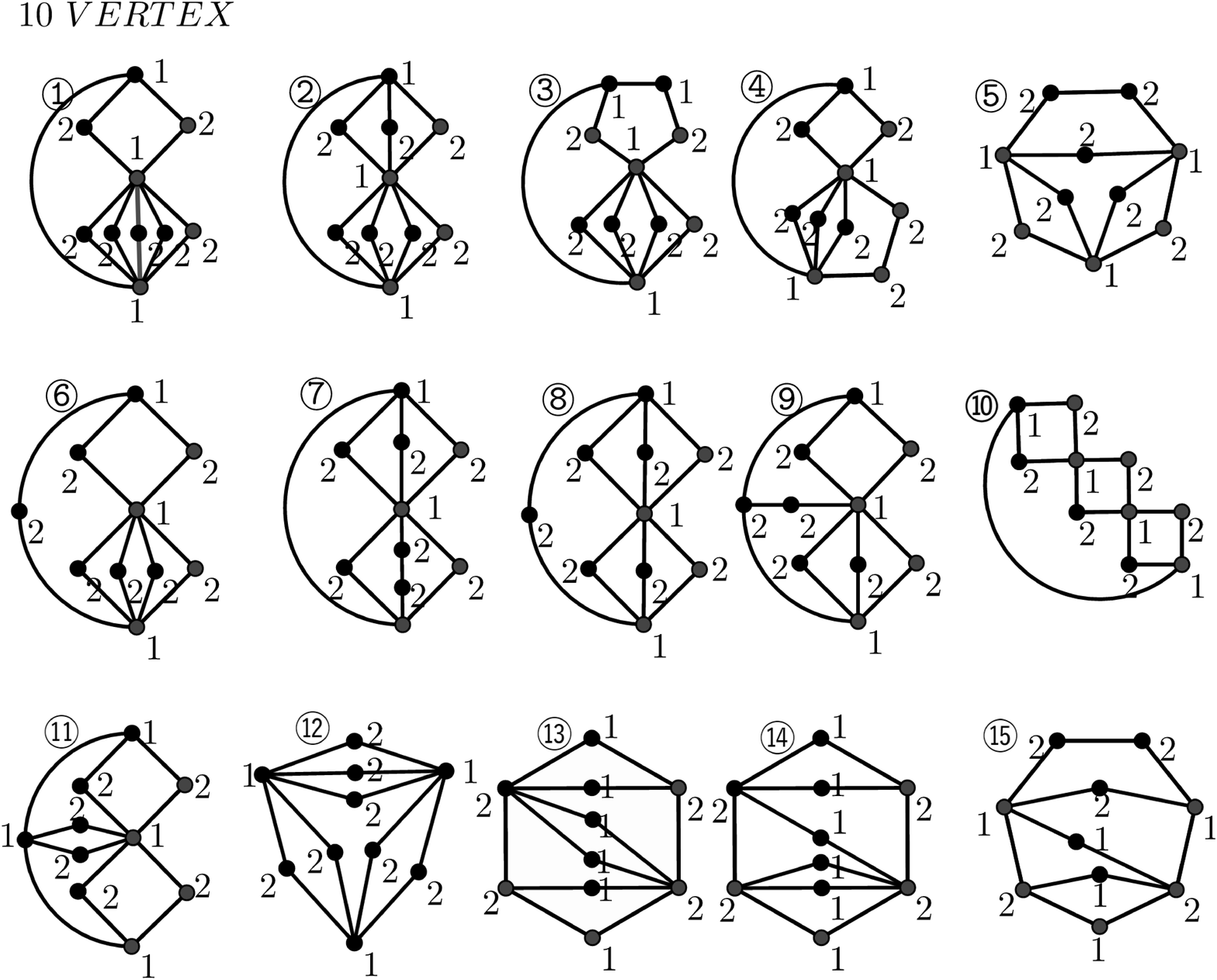}
  \includegraphics[height=7cm]{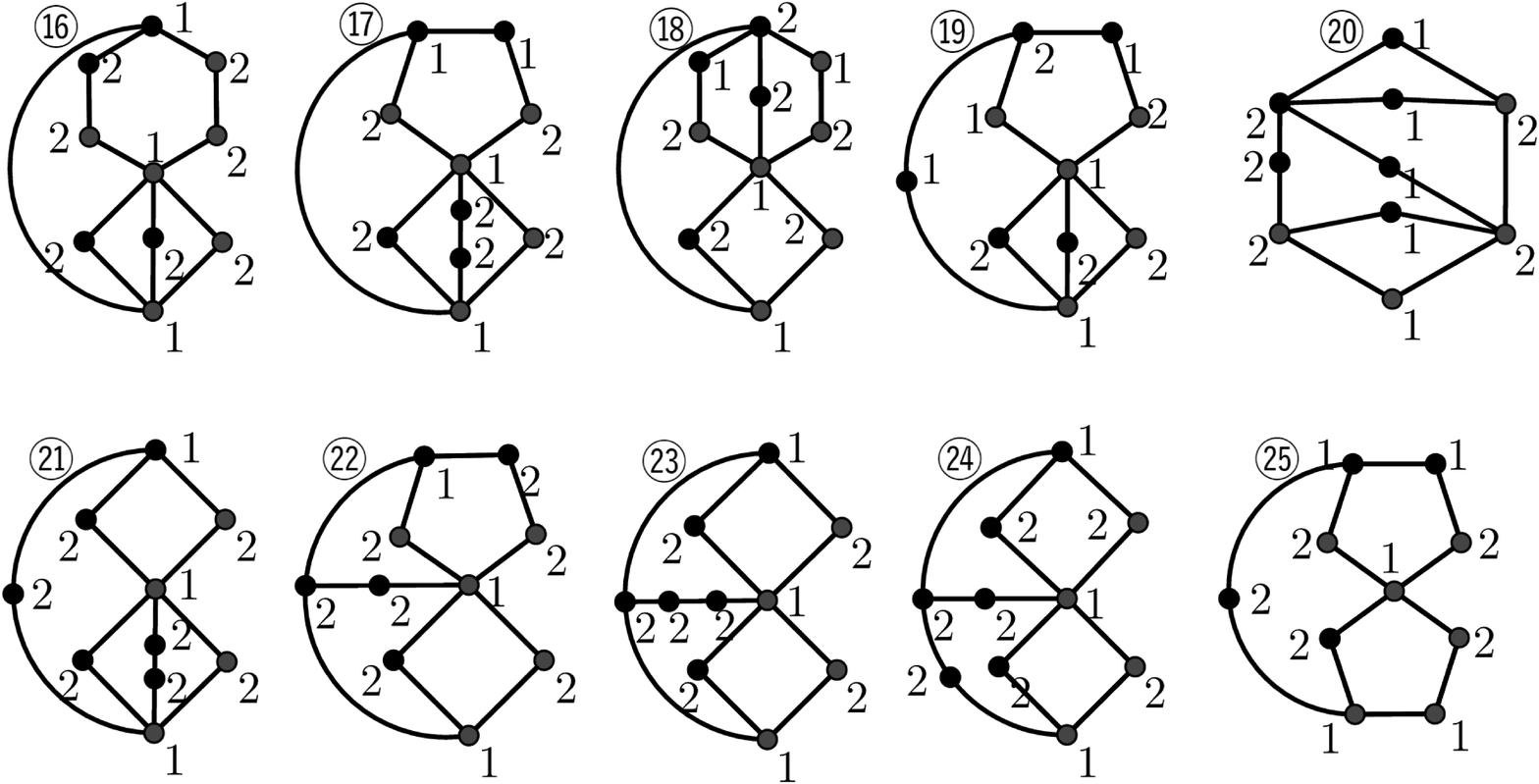}
  \includegraphics[height=3cm]{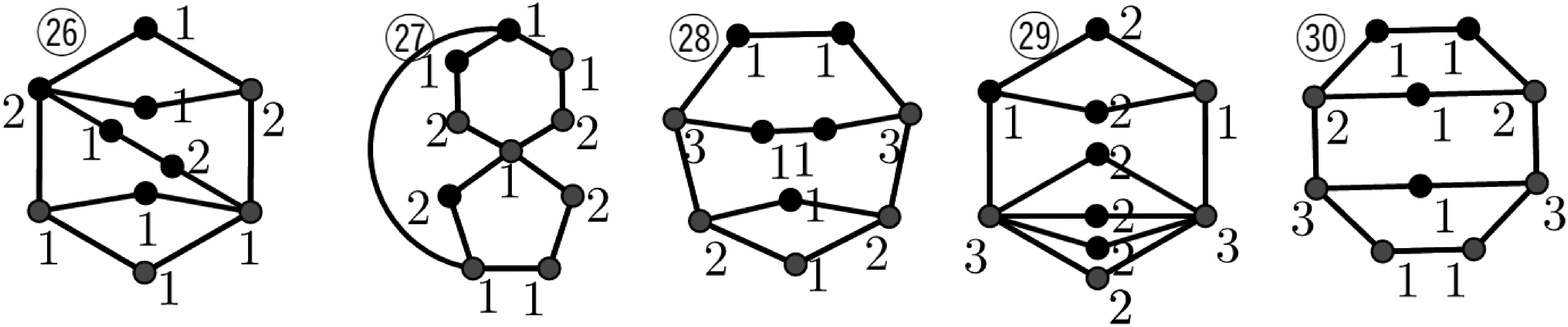}
\end{figure}
\begin{figure}[H]
\centering
  \includegraphics[height=6.2cm]{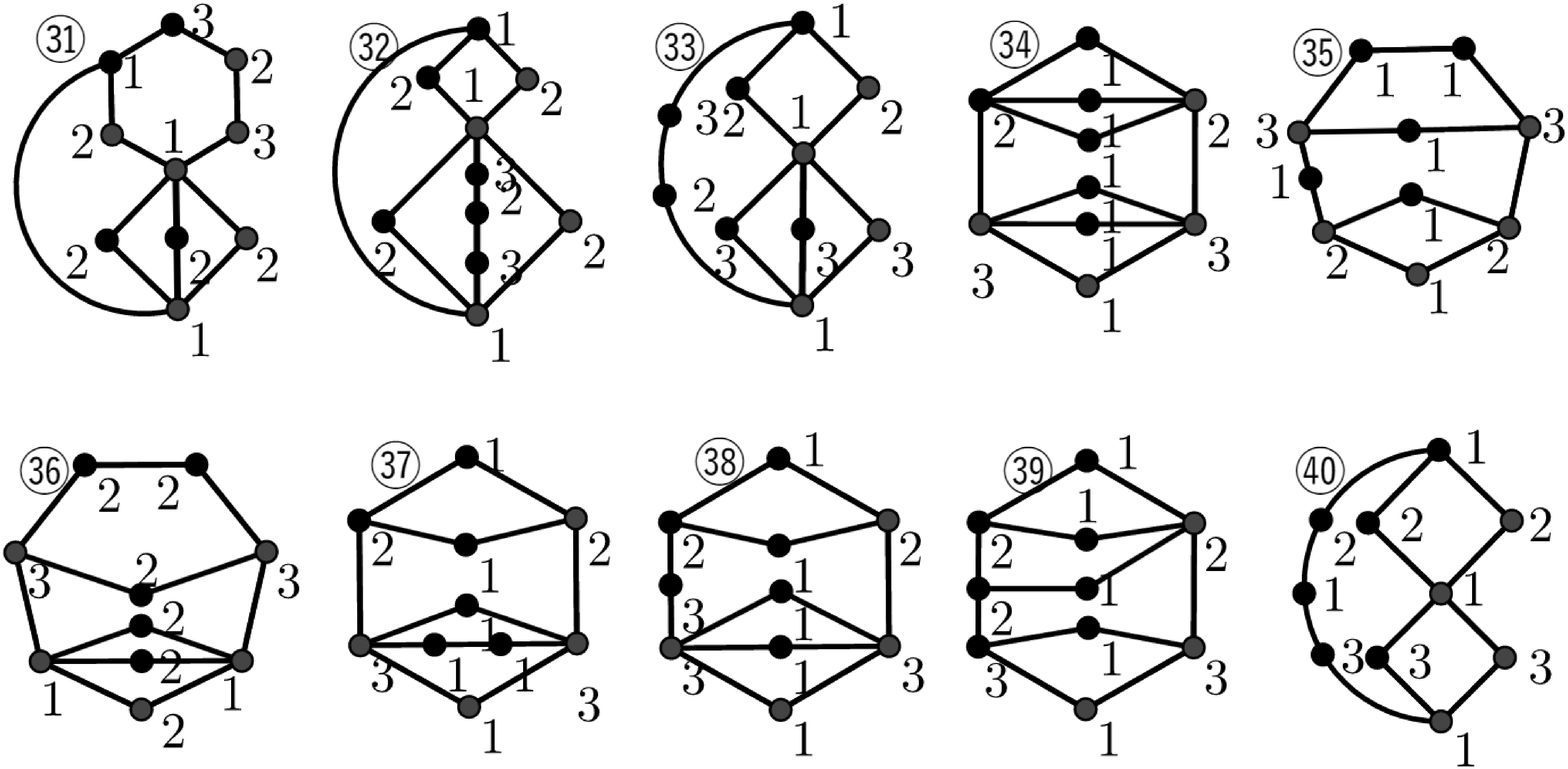}
  \includegraphics[height=7.2cm]{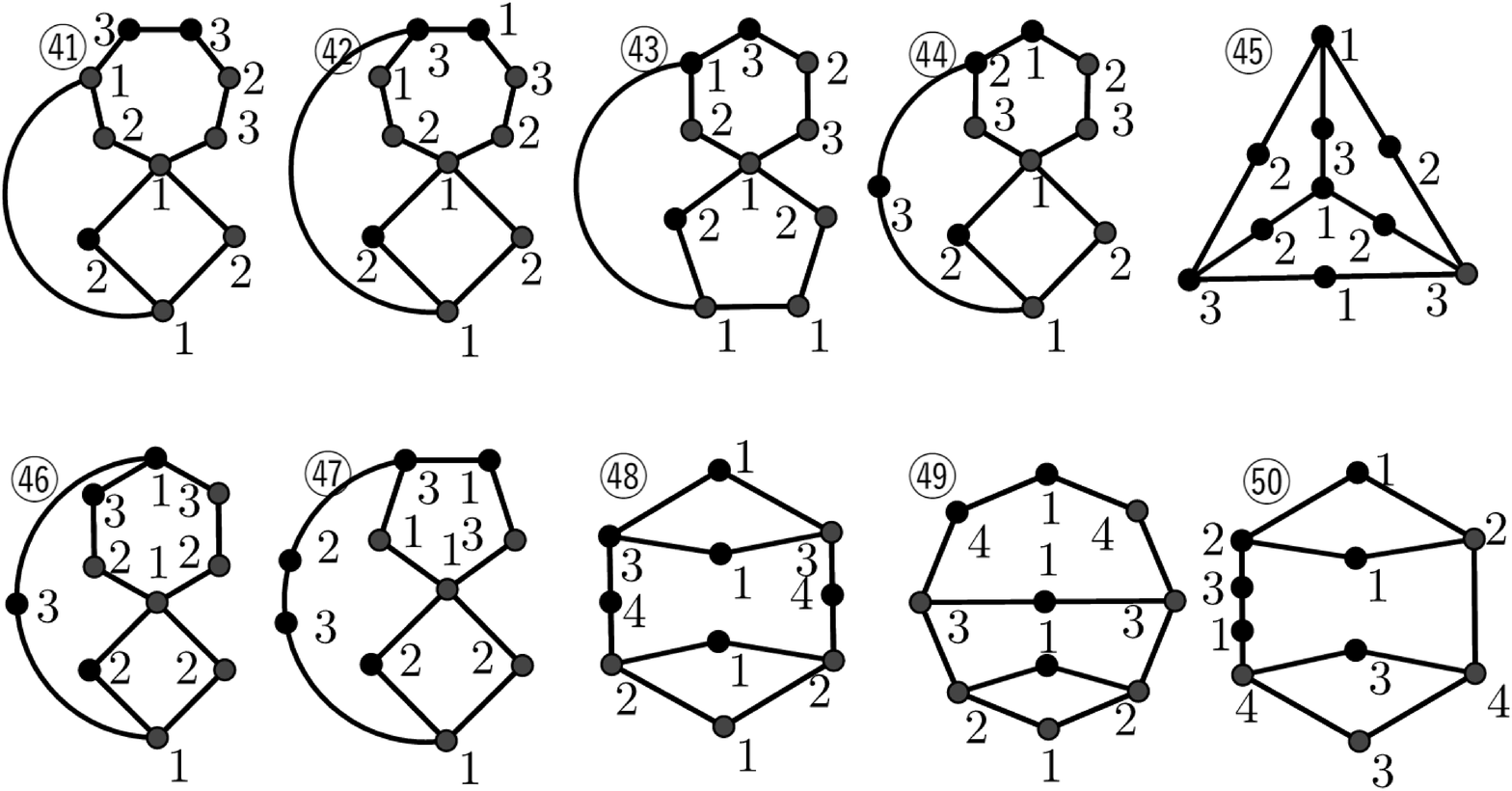}
  \includegraphics[height=7.9cm]{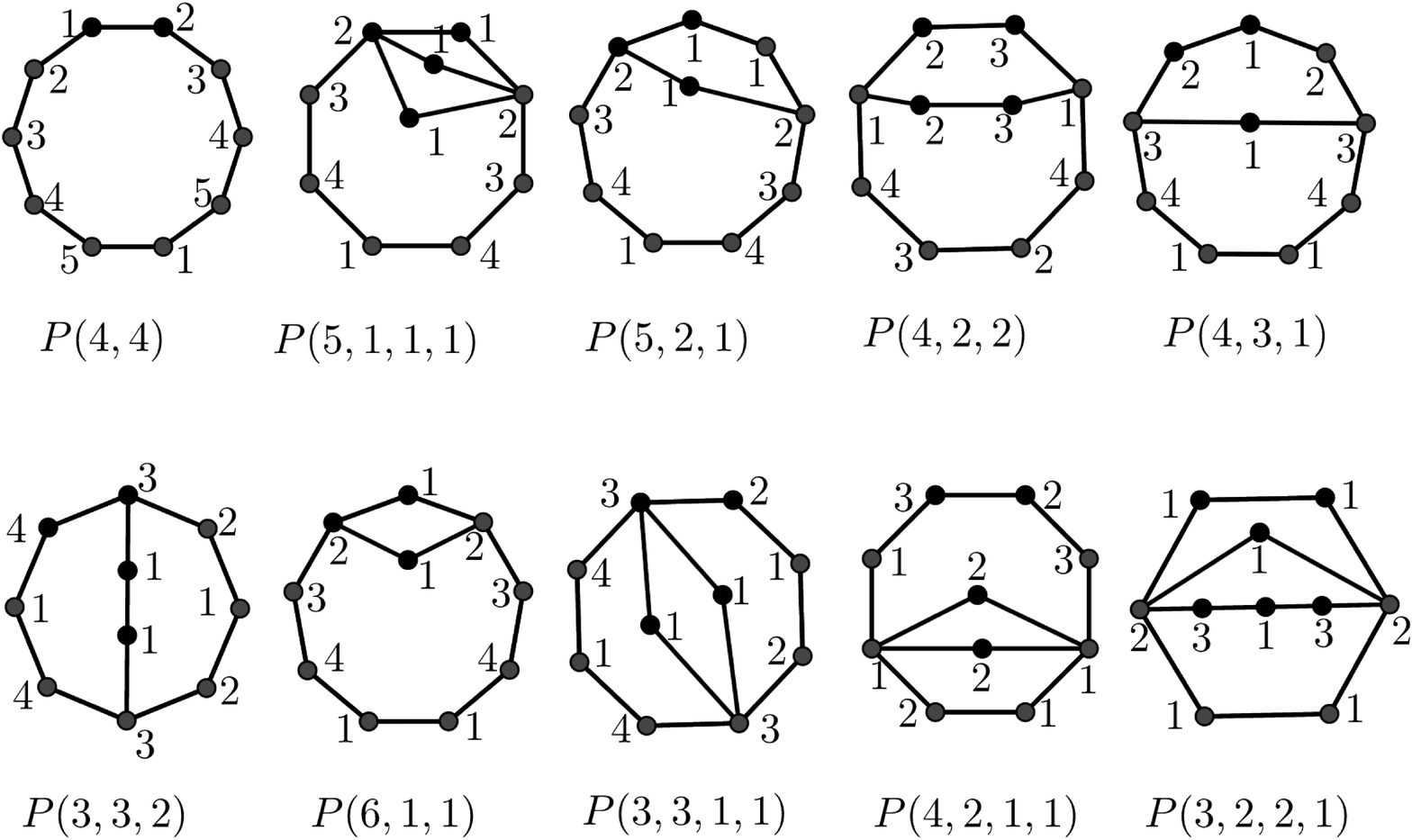}
\end{figure}
\begin{figure}[H]
\centering
  \includegraphics[height=8.4cm]{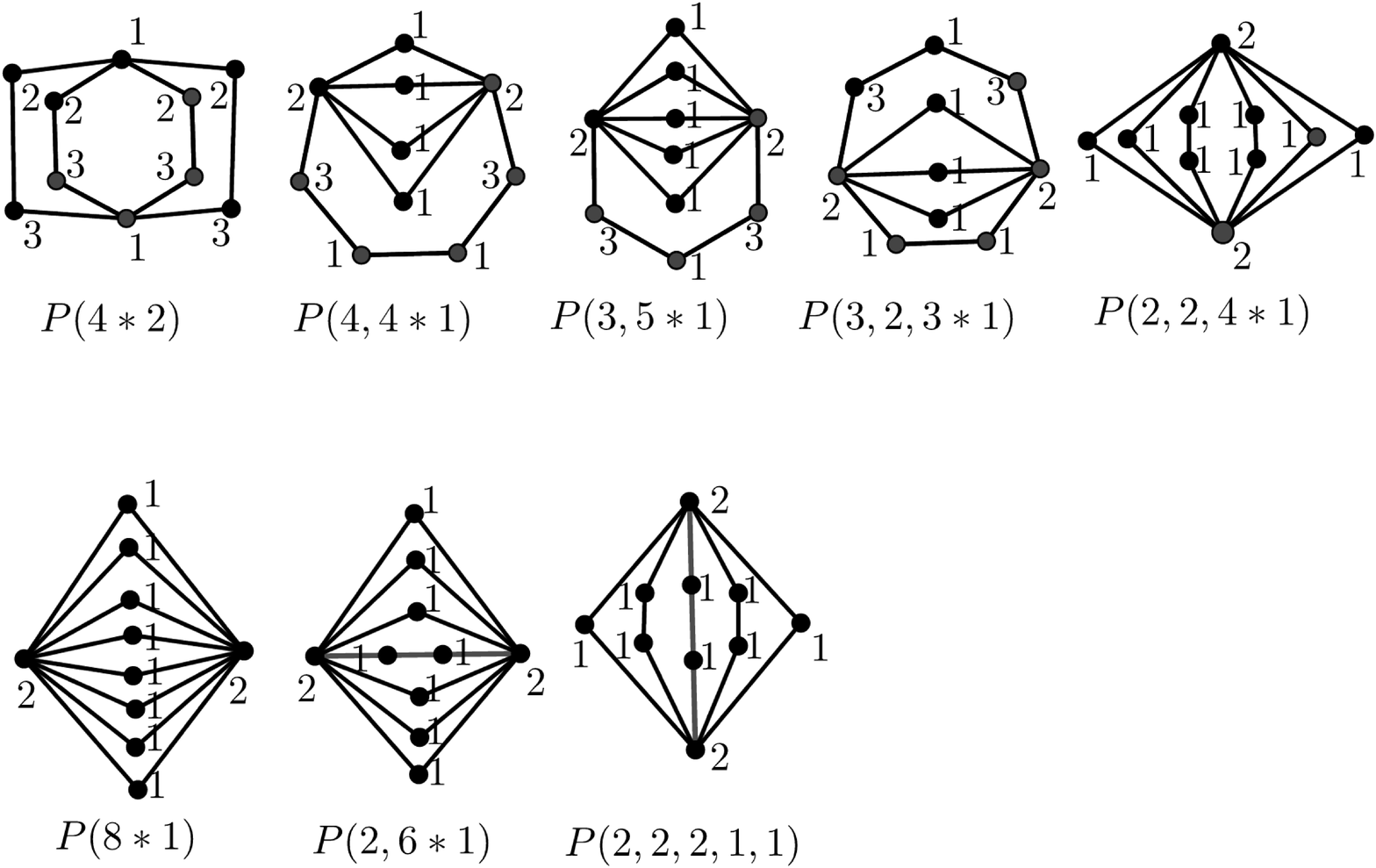}
\end{figure}

\newpage
\section{Code for $mvd$-coloring}

\begin{lstlisting}[ language=Java]
public class GraphContext {
    public static void main(String[] args) {
        String[] verx = new String[]{"A", "B", "C", "D", "E", "F", "G", "H", "I", "J", "K", "L", "M", "N", "O", "P", "Q"};
        int[][] edges = new int[][]{
                {0, 0, 0, 0, 1, 0, 0, 0, 0, 0, 0, 0, 0, 0, 0, 1, 0},
                {0, 0, 0, 0, 0, 0, 0, 1, 0, 0, 0, 0, 0, 0, 0, 0, 1},
                {0, 0, 0, 0, 0, 0, 0, 0, 0, 0, 0, 1, 0, 0, 1, 0, 0},
                {0, 0, 0, 0, 0, 0, 0, 0, 0, 0, 0, 1, 1, 0, 0, 0, 0},
                {1, 0, 0, 0, 0, 1, 0, 0, 0, 0, 1, 0, 0, 1, 0, 0, 0},
                {0, 0, 0, 0, 1, 0, 1, 0, 0, 0, 0, 0, 0, 0, 0, 0, 0},
                {0, 0, 0, 0, 0, 1, 0, 1, 0, 1, 0, 0, 0, 0, 0, 1, 0},
                {0, 1, 0, 0, 0, 0, 1, 0, 0, 0, 0, 0, 1, 1, 1, 0, 0},
                {0, 0, 0, 0, 0, 0, 0, 0, 0, 0, 0, 1, 1, 0, 0, 0, 0},
                {0, 0, 0, 0, 0, 0, 1, 0, 0, 0, 0, 0, 0, 1, 0, 0, 0},
                {0, 0, 0, 0, 1, 0, 0, 0, 0, 0, 0, 0, 0, 0, 0, 1, 0},
                {0, 0, 1, 1, 0, 0, 0, 0, 1, 0, 0, 0, 0, 0, 0, 0, 1},
                {0, 0, 0, 1, 0, 0, 0, 1, 1, 0, 0, 0, 0, 0, 0, 0, 0},
                {0, 0, 0, 0, 1, 0, 0, 1, 0, 1, 0, 0, 0, 0, 0, 0, 0},
                {0, 0, 1, 0, 0, 0, 0, 1, 0, 0, 0, 0, 0, 0, 0, 0, 0},
                {1, 0, 0, 0, 0, 0, 1, 0, 0, 0, 1, 0, 0, 0, 0, 0, 0},
                {0, 1, 0, 0, 0, 0, 0, 0, 0, 0, 0, 1, 0, 0, 0, 0, 0}
        };
        BlockAndCutVerticesBuilder blockAndCutVerticesBuilder = new BlockAndCutVerticesBuilder();
        Edge[][] matrixEdges = blockAndCutVerticesBuilder.create(verx, edges);
        System.out.println("### Input Graph ###");
        blockAndCutVerticesBuilder.printGraph(matrixEdges);
        //use tarjan algorithm to calculate cutVertices and blocks
        blockAndCutVerticesBuilder.makeDFSTarjan(0);
        //print cutVertices, blocks and Adjacency matrix
        blockAndCutVerticesBuilder.printResult();
        System.out.println();
        List<MvdGraph> graphs = blockAndCutVerticesBuilder.createMvdGraph();
        //mvd coloring
        MvdColorMarker mvdColorMarker = new MvdColorMarker();
        mvdColorMarker.markBlocks(graphs);
        Node[] vertices = blockAndCutVerticesBuilder.vertices;
        System.out.println("### Coloring Vertices Results ###");
        for (Node node : vertices) {
            System.out.print(node);
        }
    }
}
------------------------------------------------
public class BlockAndCutVerticesBuilder {
    public Node[] vertices;
    public Edge[][] edges;
    public Set<Node> cutVerticesSet = new HashSet<>();
    public List<List<Node>> block = new ArrayList<>();
    public List<Edge[][]> blocksEdges = new ArrayList<>();
    public Stack<Node> nodesStack = new Stack<>();
    int root;
    int count = 1;
    public Edge[][] create(String[] vertexs, int[][] edges) {
        this.vertices = new Node[vertexs.length];
        for (int i = 0; i < vertexs.length; i++) {
            this.vertices[i] = new Node(vertexs[i]);
        }
        this.edges = new Edge[edges.length][edges[0].length];
        for (int row = 0; row < edges.length; row++) {
            for (int col = 0; col < edges[0].length; col++) {
                this.edges[row][col] = new Edge(edges[row][col]);
            }
        }
        return this.edges;
    }
    public List<MvdGraph> createMvdGraph() {
        List<MvdGraph> graphs = new ArrayList<>();
        for (int i = 0; i < block.size(); i++) {
            graphs.add(new MvdGraph(block.get(i).toArray(new Node[0]), blocksEdges.get(i)));
        }
        return graphs;
    }
    public void printGraph(Edge[][] edges) {
        for (int row = 0; row < edges.length; row++) {
            for (int col = 0; col < edges[0].length; col++) {
                System.out.printf("%d\t", edges[row][col].connected);
            }
            System.out.println();
        }
    }
    public void printResult() {
        System.out.println("### CutVertices and Blocks  ###");
        System.out.println("cutVertices of Input Graph:");
        System.out.println("cutVerticesSet:" + cutVerticesSet);
        System.out.println();
        System.out.println("Block generated from Graph:");
        for (int i = 0; i < block.size(); i++) {
            System.out.println("Block num " + (i + 1));
            System.out.println(block.get(i));
            PrintUtils.printGraph(blocksEdges.get(i));
        }
    }
    public void makeDFSTarjan(int activeNodeIndex) {
        vertices[activeNodeIndex].depth = 1;
        nodesStack.push(vertices[activeNodeIndex]);
        root = activeNodeIndex;
        //calculate the cut vertices and blocks
        DFSTarjan(activeNodeIndex);
        //calculate the blocks' adjacency matrices
        cutBlocksEdges();
    }
    public void cutBlocksEdges() {
        for (List<Node> lists : block) {
            List<Integer> indices = new ArrayList<Integer>();
            for (Node node : lists) {
                indices.add(getIndexOfNode(node));
            }
            Edge[][] blockEdge = new Edge[indices.size()][indices.size()];
            for (int i = 0; i < indices.size(); i++) {
                for (int j = 0; j < indices.size(); j++) {
                    int rowIndex = indices.get(i);
                    int colIndex = indices.get(j);
                    blockEdge[i][j] = edges[rowIndex][colIndex];
                }
            }
            blocksEdges.add(blockEdge);
        }
    }
    private void DFSTarjan(int activeNodeIndex) {
        Node currentNode = this.vertices[activeNodeIndex];
        while(this.getNodesOfUnreachedEdge(activeNodeIndex) != null || currentNode.parent != null) {
            if (this.getNodesOfUnreachedEdge(activeNodeIndex) == null) {
                if (currentNode.low >= currentNode.parent.depth) {
                    if (this.getIndexOfNode(currentNode.parent) != this.root || this.getNodesOfUnreachedEdge(this.root) != null) {
                        this.cutVerticesSet.add(currentNode.parent);
                        currentNode.parent.isCutVertex = true;
                    }
                    ArrayList list = new ArrayList();
                    while(!currentNode.equals(this.nodesStack.peek())) {
                        list.add(this.nodesStack.pop());
                    }
                    list.add(this.nodesStack.pop());
                    list.add(currentNode.parent);
                    this.block.add(list);
                } else {
                    currentNode.parent.low = Math.min(currentNode.parent.low, currentNode.low);
                }
                return;
            }
            Node nextNode = this.getNodesOfUnreachedEdge(activeNodeIndex);
            this.markEdgeReached(currentNode, nextNode);
            if (nextNode.depth == 0) {
                this.nodesStack.push(nextNode);
                nextNode.parent = currentNode;
                ++this.count;
                nextNode.depth = this.count;
                nextNode.low = this.count;
                this.DFSTarjan(this.getIndexOfNode(nextNode));
            } else {
                currentNode.low = Math.min(currentNode.low, nextNode.depth);
            }
        }
    }
    private void markEdgeReached(Node currentNode, Node nextNode) {
        int from = this.getIndexOfNode(currentNode);
        int to = this.getIndexOfNode(nextNode);
        this.edges[from][to].reached = true;
        this.edges[to][from].reached = true;
    }
    private Node getNodesOfUnreachedEdge(int activeVex) {
        for(int col = 0; col < this.edges[activeVex].length; ++col) {
            Edge edge = this.edges[activeVex][col];
            if (edge.connected == 1) {
                Node cur = this.vertices[col];
                if (!edge.reached) {
                    return cur;
                }
            }
        }
        return null;
    }
    private int getIndexOfNode(Node node) {
        for(int i = 0; i < this.vertices.length; ++i) {
            if (this.vertices[i].name.equals(node.name)) {
                return i;
            }
        }
        return -1;
    }
}
------------------------------------------------
public class IsomorphicJudger {
    public static Node[] getIsomorphicColors(MvdGraph block, MvdGraph type) {
        if (block == null || type == null) {
            return null;
        }
        if (block.vertices == null || type.vertices == null) {
            return null;
        }
        if (block.vertices.length != type.vertices.length) {
            return null;
        }
        int n = block.vertices.length;
        //block adjacency matrix
        int[][] blockArray = new int[n][n];
        for (int i1 = 0; i1 < n; i1++) {
            blockArray[i1] = new int[n];
        }
        // block 01 adjacency matrix
        int[][] blockTongxin = new int[n][n];
        for (int i1 = 0; i1 < n; i1++) {
            blockTongxin[i1] = new int[n];
        }
        //block XOR matrix
        int[][] blockYihuo = new int[n][n];
        for (int i1 = 0; i1 < n; i1++) {
            blockYihuo[i1] = new int[n];
        }
        //block XNOR matrix
        int[][] blockTonghuo = new int[n][n];
        for (int i1 = 0; i1 < n; i1++) {
            blockTonghuo[i1] = new int[n];
        }
        //type adjacency matrix
        int[][] typeArray = new int[n][n];
        for (int i2 = 0; i2 < n; i2++) {
            typeArray[i2] = new int[n];
        }
        // type 01 adjacency matrix
        int[][] typeTongxin = new int[n][n];
        for (int i1 = 0; i1 < n; i1++) {
            typeTongxin[i1] = new int[n];
        }
        //block XOR matrix
        int[][] typeYihuo = new int[n][n];
        for (int i1 = 0; i1 < n; i1++) {
            typeYihuo[i1] = new int[n];
        }
        //block XNOR matrix
        int[][] typeTonghuo = new int[n][n];
        for (int i1 = 0; i1 < n; i1++) {
            typeTonghuo[i1] = new int[n];
        }
        for (int row = 0; row < block.edges.length; row++) {
            for (int col = 0; col < block.edges[0].length; col++) {
                blockArray[row][col] = block.edges[row][col].connected;
            }
        }
        for (int row = 0; row < type.edges.length; row++) {
            for (int col = 0; col < type.edges[0].length; col++) {
                typeArray[row][col] = type.edges[row][col].connected;
            }
        }
        oneZero(blockArray, blockTongxin, n);
        xor(blockArray, blockTongxin, blockYihuo, n);
        xnor(blockArray, blockTongxin, blockTonghuo, n);
        oneZero(typeArray, typeTongxin, n);
        xor(typeArray, typeTongxin, typeYihuo, n);
        xnor(typeArray, typeTongxin, typeTonghuo, n);
        return getTransVertices(blockArray, blockYihuo, blockTonghuo, typeArray, typeYihuo, typeTonghuo, n, type.vertices, 0, 0);
    }
    public static Node[] getTransVertices(int[][] blockArray, int[][] blockYihuo, int[][] blockTonghuo,
                                          int[][] typeArray, int[][] typeYihuo, int[][] typeTonghuo, int n,
                                          Node[] typeVertices, int matchedRowNum, int blockRowBegin) {
        int[][] typeArrayCopy = new int[n][n];
        for (int i = 0; i < n; i++) {
            for (int j = 0; j < n; j++) {
                typeArrayCopy[i][j] = typeArray[i][j];
            }
        }
        int[][] typeYihuoCopy = new int[n][n];
        for (int i = 0; i < n; i++) {
            for (int j = 0; j < n; j++) {
                typeYihuoCopy[i][j] = typeYihuo[i][j];
            }
        }
        int[][] typeTonghuoCopy = new int[n][n];
        for (int i = 0; i < n; i++) {
            for (int j = 0; j < n; j++) {
                typeTonghuoCopy[i][j] = typeTonghuo[i][j];
            }
        }
        //colored vertices template
        Node[] vertices = new Node[n];
        for (int i = 0; i < vertices.length; i++) {
            vertices[i] = new Node(typeVertices[i].name, typeVertices[i].color);
        }
        //template array to store data
        int[] tempBlockYihuo = new int[n];
        //template array to store data
        int[] tempTypeYihuo = new int[n];
        //template array to store data
        int[] tempBlockArray = new int[n];
        //template array to store data
        int[] tempBlockTonghuo = new int[n];
        //template array to store data
        int[] tempTypeArray = new int[n];
        //template array to store data
        int[] tempTypeTonghuo = new int[n];
        int t;
        int colNum;
        for (int blockRow = blockRowBegin; blockRow < n; blockRow++) {
            for (int j = 0; j < n; j++) {
                tempBlockYihuo[j] = blockYihuo[blockRow][j];
            }
            for (int j = 0; j < n; j++) {
                tempBlockArray[j] = blockArray[blockRow][j];
            }
            for (int j = 0; j < n; j++) {
                tempBlockTonghuo[j] = blockTonghuo[blockRow][j];
            }
            //bubble sort preprocessing
            bubbleSort(tempBlockYihuo, n);
            bubbleSort(tempBlockArray, n);
            bubbleSort(tempBlockTonghuo, n);
            for (int typeRow = blockRow; typeRow < n; typeRow++) {
                colNum = 0;
                for (int j = 0; j < n; j++) {
                    tempTypeYihuo[j] = typeYihuo[typeRow][j];
                }
                for (int j = 0; j < n; j++) {
                    tempTypeArray[j] = typeArray[typeRow][j];
                }
                for (int j = 0; j < n; j++) {
                    tempTypeTonghuo[j] = typeTonghuo[typeRow][j];
                }
                bubbleSort(tempTypeYihuo, n);
                bubbleSort(tempTypeArray, n);
                bubbleSort(tempTypeTonghuo, n);
                //begin to compare
                for (int col = 0; col < n; col++) {
                    //compare the XOR matrix
                    if (tempBlockYihuo[col] != tempTypeYihuo[col]) {
                        if (typeRow == n - 1) {
                            return null;
                        }
                        break;
                    }
                    colNum = col;
                    if (col != n - 1) {
                        continue;
                    }
                    //compare the adjacency matrix
                    for (int b = 0; b < n; b++) {
                        if (tempBlockArray[b] == tempTypeArray[b]) {
                            continue;
                        } else if (b < n - 1) {
                            return null;
                        }
                    }
                    //compare the XNOR matrix
                    for (int c = 0; c < n; c++) {
                        if (tempBlockTonghuo[c] == tempTypeTonghuo[c]) {
                            continue;
                        } else if (c < n - 1) {
                            return null;
                        }
                    }
                    matchedRowNum++;//two rows match
                    Node tempNode = vertices[blockRow];
                    vertices[blockRow] = vertices[typeRow];
                    vertices[typeRow] = tempNode;
                    //elementary operations: switch adjacency cols
                    for (int c = 0; c < n; c++) {
                        t = typeArrayCopy[blockRow][c];
                        typeArrayCopy[blockRow][c] = typeArrayCopy[typeRow][c];
                        typeArrayCopy[typeRow][c] = t;
                    }
                    //elementary operations: switch adjacency rows
                    for (int r = 0; r < n; r++) {
                        t = typeArrayCopy[r][blockRow];
                        typeArrayCopy[r][blockRow] = typeArrayCopy[r][typeRow];
                        typeArrayCopy[r][typeRow] = t;
                    }
                    //elementary operations: switch XOR cols
                    for (int c = 0; c < n; c++) {
                        t = typeYihuoCopy[blockRow][c];
                        typeYihuoCopy[blockRow][c] = typeYihuoCopy[typeRow][c];
                        typeYihuoCopy[typeRow][c] = t;
                    }
                    //elementary operations: switch XOR rows
                    for (int r = 0; r < n; r++) {
                        t = typeYihuoCopy[r][blockRow];
                        typeYihuoCopy[r][blockRow] = typeYihuoCopy[r][typeRow];
                        typeYihuoCopy[r][typeRow] = t;
                    }
                    //elementary operations: switch XNOR cols
                    for (int c = 0; c < n; c++) {
                        t = typeTonghuoCopy[blockRow][c];
                        typeTonghuoCopy[blockRow][c] = typeTonghuoCopy[typeRow][c];
                        typeTonghuoCopy[typeRow][c] = t;
                    }
                    //elementary operations: switch XNOR rows
                    for (int r = 0; r < n; r++) {
                        t = typeTonghuoCopy[r][blockRow];
                        typeTonghuoCopy[r][blockRow] = typeTonghuoCopy[r][typeRow];
                        typeTonghuoCopy[r][typeRow] = t;
                    }
                }
                //if each matrix's rows matched
                if (colNum == n - 1) {
                    //recursively compare next rows until we find a isomorphic solution
                    Node[] result = getTransVertices(blockArray, blockYihuo, blockTonghuo, typeArrayCopy, typeYihuoCopy, typeTonghuoCopy, n, vertices, matchedRowNum, blockRow + 1);
                    if (result != null) {
                        return result;
                    }
                    if (matchedRowNum == n) {
                        //all rows match, but also the matrices been operated should be the same
                        if (isMaticxSame(blockArray, typeArray)) {
                            return vertices;
                        }
                        return null;
                    }
                    for (int i = 0; i < vertices.length; i++) {
                        vertices[i] = new Node(typeVertices[i].name, typeVertices[i].color);
                    }
                    matchedRowNum = matchedRowNum - 1;
                    //continue
                }
            }
        }
        return null;
    }
    private static boolean isMaticxSame(int[][] blockArray, int[][] typeArrayCopy) {
        if (blockArray.length != typeArrayCopy.length) {
            return false;
        }
        if (blockArray[0].length != typeArrayCopy[0].length) {
            return false;
        }
        for (int row = 0; row < blockArray.length; row++) {
            for (int col = 0; col < blockArray[0].length; col++) {
                if (blockArray[row][col] != typeArrayCopy[row][col]) {
                    return false;
                }
            }
        }
        return true;
    }
    /**
     * bubble sort
     */
    public static void bubbleSort(int mp[], int n) {
        int t;
        for (int i = 0; i < n - 1; i++) {
            for (int j = 0; j < n - 1 - i; j++) {
                if (mp[j] > mp[j + 1]) {
                    t = mp[j];
                    mp[j] = mp[j + 1];
                    mp[j + 1] = t;
                }
            }
        }
    }
    /**
     * calculate 01 adjacency matrix
     */
    public static void oneZero(int[][] p1, int[][] p2, int n) {
        for (int i = 0; i < n; i++) {
            for (int j = 0; j < n; j++) {
                if (p1[i][j] > 0) {
                    p2[i][j] = 1;
                } else {
                    p2[i][j] = 0;
                }
            }
        }
    }
    /**
     * calculate XOR matrix
     */
    public static void xor(int[][] p1, int[][] p2, int[][] p3, int n) {
        for (int i = 0; i < n; i++) {
            for (int j = 0; j < n; j++) {
                if (i == j) {
                    p3[i][j] = p1[i][i];
                } else {
                    int sum1, sum12;
                    sum1 = 0;
                    for (int k = 0; k < n; k++) {
                        if (p2[i][k] == p2[j][k]) {
                            sum12 = 0;
                        } else {
                            sum12 = 1;
                        }
                        sum1 = sum1 + (p1[i][k] + p1[j][k]) * sum12;
                    }
                    p3[i][j] = sum1;
                }
            }
        }
    }
    /**
     * calculate XNOR matrix
     */
    public static void xnor(int[][] p1, int[][] p2, int[][] p4, int n) {
        for (int i = 0; i < n; i++) {
            for (int j = 0; j < n; j++) {
                if (i == j) {
                    p4[i][j] = p1[i][i];
                } else {
                    int sum1, sum12;
                    sum1 = 0;
                    for (int k = 0; k < n; k++) {
                        if (p2[i][k] == p2[j][k]) {
                            sum12 = 1;
                        } else {
                            sum12 = 0;
                        }
                        sum1 = sum1 + (p1[i][k] + p1[j][k]) * sum12;
                    }
                    p4[i][j] = sum1;
                }
            }
        }
    }
}
------------------------------------------------
public class MvdColorMarker {
    public static List<MvdGraph> GRAPHS = new ArrayList<>();
    public int colorCount = 0;
    //load the template coloring graphs in the files
    static {
        GRAPHS.add(ReadGraphUtils.readFiles("graph_9Vertex-9.txt"));
        GRAPHS.add(ReadGraphUtils.readFiles("graph_9Vertex-11.txt"));
    }
    public void markBlocks(List<MvdGraph> blocks) {
        for (MvdGraph block : blocks) {
            findTypeAndColoring(block);
        }
        //Change the color of those vertices in the template block that have the same color as the cut-vertex to the color of the current cut-vertex
        for (MvdGraph block : blocks) {
            updateColorsOfCutVertex(block);
        }
    }
    private void updateColorsOfCutVertex(MvdGraph block) {
        if (block.template == null || block.vertices == null) {
            return;
        }
        for (int i = 0; i < block.vertices.length; i++) {
            Node node = block.vertices[i];
            if (!node.isCutVertex) {
                continue;
            }
            Node templateNode = block.template[i];
            for (int j = 0; j < block.template.length; j++) {
                if (block.template[j].color.equals(templateNode.color)) {
                    block.vertices[j].color = node.color;
                }
            }
        }
    }
    private void findTypeAndColoring(MvdGraph block) {
        System.out.println("### Isomorphic Relationship ###");
        for (MvdGraph type : GRAPHS) {
            findTypeAndColoring(block, type);
        }
    }
    private void findTypeAndColoring(MvdGraph graph, MvdGraph type) {
        Node[] template = IsomorphicJudger.getIsomorphicColors(graph, type);
        if (template == null) {
            return;
        }
        for (int i = 0; i < graph.vertices.length; i++) {
            //coloring the vertices
            graph.vertices[i].color = template[i].color + colorCount;
        }
        colorCount += graph.vertices.length;
        graph.template = template;
        printRelationships(graph,type,template);
    }
    private void printRelationships(MvdGraph graph, MvdGraph type,Node[] template) {
        System.out.println("current block:");
        PrintUtils.printVerticesArray(graph.vertices);
        System.out.println("isomorphic block before elementary operations:");
        PrintUtils.printVerticesArray(type.vertices);
        PrintUtils.printGraph(type.edges);
        System.out.println("isomorphic block: after elementary operations:");
        PrintUtils.printVerticesArray(template);
        System.out.println();
    }
}
------------------------------------------------
public class Node {
    public String name;
    public Node parent;
    public int depth;
    public int low;
    public Integer color;
    public Boolean isCutVertex = false;
    public Node(String name) {
        this.name = name;
        this.depth = 0;
        this.low = this.depth;
    }
    public Node(String name, Integer color) {
        this.name = name;
        this.color = color;
        this.depth = 0;
        this.low = this.depth;
    }
    public String toString() {
        if (color == null) {
            return "{" + "'" + name + '\'' + '}' ;
        }
        return "{" + "'" + name + '\'' + ":" + color + '}' + " ";
    }
    public boolean equals(Object o) {
        if (this == o) {
            return true;
        }
        if (o == null || getClass() != o.getClass()) {
            return false;
        }
        Node node = (Node) o;
        return depth == node.depth && low == node.low && color == node.color && Objects.equals(name, node.name) && Objects.equals(parent, node.parent);
    }
    public int hashCode() {
        return Objects.hash(name);
    }
}
------------------------------------------------
public class Edge {
    public int connected;
    public boolean reached;
    public Edge(int connected) {
        this.connected = connected;
        this.reached = false;
    }
}
------------------------------------------------
public class MvdGraph {
    public Edge[][] edges;
    public Node[] vertices;
    //record the colored vertices corresponding to vertices
    public Node[] template;
    public MvdGraph(Node[] vertices, Edge[][] edges) {
        this.vertices = vertices;
        this.edges = edges;
    }
    public void print() {
        for (Node node : vertices) {
            System.out.print(node);
        }
        System.out.println();
        printGraph(edges);
    }
    private void printGraph(Edge[][] edges) {
        for (int row = 0; row < edges.length; row++) {
            for (int col = 0; col < edges[0].length; col++) {
                System.out.printf("%d\t", edges[row][col].connected);
            }
            System.out.println();
        }
        System.out.println("----------------");
    }
}
------------------------------------------------
public class PrintUtils {
    public static void printGraph(Edge[][] edges) {
        for (int row = 0; row < edges.length; row++) {
            for (int col = 0; col < edges[0].length; col++) {
                System.out.printf("%d\t", edges[row][col].connected);
            }
            System.out.println();
        }
    }
    public static void printVerticesArray(Node[] nodes) {
        for (Node node:nodes) {
            System.out.print(node);
        }
        System.out.println();
    }
}
------------------------------------------------
public class ReadGraphUtils {
    public static MvdGraph readFiles(String fileName) {
        try {
            FileReader in = new FileReader("resources/"+fileName);
            BufferedReader reader = new BufferedReader(in);
            String line, verticesLines;
            ArrayList<ArrayList<Integer>> edgesList = new ArrayList<ArrayList<Integer>>();
            verticesLines = reader.readLine();
            Node[] vertices = createVertices(verticesLines.split(","));
            while ((line = reader.readLine()) != null) {
                String[] str = line.split(",");
                ArrayList lineEdge = new ArrayList<>();
                for (String edge : str) {
                    lineEdge.add(Integer.parseInt(edge.trim()));
                }
                edgesList.add(lineEdge);
            }
            Edge[][] edges = createEdges(edgesList);
            if (edges.length != vertices.length) {
                System.out.println("The number of vertices and edges should be equivalent");
                return null;
            }
            return new MvdGraph(vertices, edges);
        } catch (Exception e) {
            System.out.println(e);
            return null;
        }
    }
    private static Node[] createVertices(String[] strVertices) {
        Node[] vertices = new Node[strVertices.length];
        for (int i = 0; i < strVertices.length; i++) {
            vertices[i] = new Node(strVertices[i].split(":")[0].trim(), Integer.parseInt(strVertices[i].split(":")[1].trim()));
        }
        return vertices;
    }
    private static Edge[][] createEdges(ArrayList<ArrayList<Integer>> edgesList) {
        Edge[][] result = new Edge[edgesList.size()][edgesList.size()];
        for (int row = 0; row < result.length; row++) {
            for (int col = 0; col < result[0].length; col++) {
                result[row][col] = new Edge(edgesList.get(row).get(col));
            }
        }
        return result;
    }
}
------------------------------------------------
//resources directory
a:1, b:2, c:1, d:2, e:1, f:2, g:1, h:1, i:2
0, 1, 0, 0, 0, 1, 0, 0, 0
1, 0, 1, 0, 0, 0, 1, 0, 0
0, 1, 0, 1, 0, 0, 0, 1, 1
0, 0, 1, 0, 1, 0, 0, 0, 0
0, 0, 0, 1, 0, 1, 0, 0, 1
1, 0, 0, 0, 1, 0, 1, 1, 0
0, 1, 0, 0, 0, 1, 0, 0, 0
0, 0, 1, 0, 0, 1, 0, 0, 0
0, 0, 1, 0, 1, 0, 0, 0, 0

a:1, b:2, c:1, d:2, e:1, f:2, g:2, h:1, i:2
0, 1, 0, 0, 0, 1, 1, 0, 1
1, 0, 1, 0, 0, 0, 0, 0, 0
0, 1, 0, 1, 0, 0, 0, 0, 0
0, 0, 1, 0, 1, 0, 0, 1, 0
0, 0, 0, 1, 0, 1, 0, 0, 0
1, 0, 0, 0, 1, 0, 0, 0, 0
1, 0, 0, 0, 0, 0, 0, 1, 0
0, 0, 0, 1, 0, 0, 1, 0, 1
1, 0, 0, 0, 0, 0, 0, 1, 0
------------------------------------------------
//the output results
### Input Graph ###
0	0	0	0	1	0	0	0	0	0	0	0	0	0	0	1	0	
0	0	0	0	0	0	0	1	0	0	0	0	0	0	0	0	1	
0	0	0	0	0	0	0	0	0	0	0	1	0	0	1	0	0	
0	0	0	0	0	0	0	0	0	0	0	1	1	0	0	0	0	
1	0	0	0	0	1	0	0	0	0	1	0	0	1	0	0	0	
0	0	0	0	1	0	1	0	0	0	0	0	0	0	0	0	0	
0	0	0	0	0	1	0	1	0	1	0	0	0	0	0	1	0	
0	1	0	0	0	0	1	0	0	0	0	0	1	1	1	0	0	
0	0	0	0	0	0	0	0	0	0	0	1	1	0	0	0	0	
0	0	0	0	0	0	1	0	0	0	0	0	0	1	0	0	0	
0	0	0	0	1	0	0	0	0	0	0	0	0	0	0	1	0	
0	0	1	1	0	0	0	0	1	0	0	0	0	0	0	0	1	
0	0	0	1	0	0	0	1	1	0	0	0	0	0	0	0	0	
0	0	0	0	1	0	0	1	0	1	0	0	0	0	0	0	0	
0	0	1	0	0	0	0	1	0	0	0	0	0	0	0	0	0	
1	0	0	0	0	0	1	0	0	0	1	0	0	0	0	0	0	
0	1	0	0	0	0	0	0	0	0	0	1	0	0	0	0	0	
### CutVertices and Blocks  ###
cutVertices of Input Graph:
cutVerticesSet:[{'H'}]

Block generated from Graph:
Block num 1
[{'I'}, {'M'}, {'D'}, {'O'}, {'C'}, {'L'}, {'Q'}, {'B'}, {'H'}]
0	1	0	0	0	1	0	0	0	
1	0	1	0	0	0	0	0	1	
0	1	0	0	0	1	0	0	0	
0	0	0	0	1	0	0	0	1	
0	0	0	1	0	1	0	0	0	
1	0	1	0	1	0	1	0	0	
0	0	0	0	0	1	0	1	0	
0	0	0	0	0	0	1	0	1	
0	1	0	1	0	0	0	1	0	
Block num 2
[{'K'}, {'P'}, {'J'}, {'N'}, {'H'}, {'G'}, {'F'}, {'E'}, {'A'}]
0	1	0	0	0	0	0	1	0	
1	0	0	0	0	1	0	0	1	
0	0	0	1	0	1	0	0	0	
0	0	1	0	1	0	0	1	0	
0	0	0	1	0	1	0	0	0	
0	1	1	0	1	0	1	0	0	
0	0	0	0	0	1	0	1	0	
1	0	0	1	0	0	1	0	1	
0	1	0	0	0	0	0	1	0	

### Isomorphic Relationship ###
current block:
{'I':2} {'M':1} {'D':2} {'O':1} {'C':2} {'L':1} {'Q':2} {'B':1} {'H':2}
isomorphic block before elementary operations:
{'a':1} {'b':2} {'c':1} {'d':2} {'e':1} {'f':2} {'g':2} {'h':1} {'i':2}
0	1	0	0	0	1	1	0	1	
1	0	1	0	0	0	0	0	0	
0	1	0	1	0	0	0	0	0	
0	0	1	0	1	0	0	1	0	
0	0	0	1	0	1	0	0	0	
1	0	0	0	1	0	0	0	0	
1	0	0	0	0	0	0	1	0	
0	0	0	1	0	0	1	0	1	
1	0	0	0	0	0	0	1	0	
isomorphic block: after elementary operations:
{'g':2} {'h':1} {'i':2} {'e':1} {'f':2} {'a':1} {'b':2} {'c':1} {'d':2}

### Isomorphic Relationship ###
current block:
{'K':10} {'P':11} {'J':11} {'N':10} {'H':11} {'G':10} {'F':10} {'E':11} {'A':10}
isomorphic block before elementary operations:
{'a':1} {'b':2} {'c':1} {'d':2} {'e':1} {'f':2} {'g':1} {'h':1} {'i':2}
0	1	0	0	0	1	0	0	0	
1	0	1	0	0	0	1	0	0	
0	1	0	1	0	0	0	1	1	
0	0	1	0	1	0	0	0	0	
0	0	0	1	0	1	0	0	1	
1	0	0	0	1	0	1	1	0	
0	1	0	0	0	1	0	0	0	
0	0	1	0	0	1	0	0	0	
0	0	1	0	1	0	0	0	0	
isomorphic block: after elementary operations:
{'a':1} {'b':2} {'d':2} {'e':1} {'i':2} {'g':1} {'h':1} {'f':2} {'c':1}

### Coloring Vertices Results ###
{'A':10} {'B':1} {'C':11} {'D':11} {'E':11} {'F':10} {'G':10} {'H':11} {'I':11} {'J':11} {'K':10} {'L':1} {'M':1} {'N':10} {'O':1} {'P':11} {'Q':11}
Process finished with exit code 0
\end{lstlisting}
\end{appendix}
\end{document}